\documentclass{article}

\usepackage[utf8]{inputenc}
\usepackage{amsmath,amsfonts,amssymb}
\usepackage{amsthm}
\usepackage{subcaption}
\usepackage{enumitem}

\usepackage[backend=biber,
            backref=true,
						maxnames=99,
            isbn=false,
            doi=false,
            sortcites=false,
            style=authoryear-icomp]{biblatex}
\addbibresource{norms.bib}

\usepackage{tikz}
\usetikzlibrary{matrix}

\usepackage[unicode,colorlinks=true,linkcolor=blue,urlcolor=magenta, citecolor=blue]{hyperref}
\usepackage[capitalize, nameinlink]{cleveref}

\newtheorem{theoremintro}{Theorem}

\newtheorem*{corollaryintro}{Corollary}
\newtheorem{theorem}{Theorem}[section]
\newtheorem{definition}[theorem]{Definition}

\newtheorem{proposition}[theorem]{Proposition}
\newtheorem{lemma}[theorem]{Lemma}

\newtheorem{step}{Step}

\theoremstyle{remark}

\newtheorem*{acknowledgments}{Acknowledgments}
\newtheorem{remark}[theorem]{Remark}

\crefformat{theoremintro}{Theorem #2#1#3}
\crefrangeformat{theoremintro}{Theorems~#3#1#4 to~#5#2#6}
\crefmultiformat{theoremintro}{Theorems~#2#1#3}%
{ and~#2#1#3}{, #2#1#3}{ and~#2#1#3}

\setcounter{step}{0}
\newlist{stepitems}{enumerate}{1}
\setlist[stepitems]{label=(\thestep.\alph*), ref=(\thestep.\alph*)}
\crefformat{stepitemsi}{Property #2#1#3}
\crefrangeformat{stepitemsi}{Properties~#3#1#4 to~#5#2#6}
\crefmultiformat{stepitemsi}{Properties~#2#1#3}%
{ and~#2#1#3}{, #2#1#3}{ and~#2#1#3}

\newcommand{\co}{\colon\thinspace}

\DeclareMathOperator{\Id}{Id}
\DeclareMathOperator{\id}{Id}
\DeclareMathOperator{\Int}{Int}
\DeclareMathOperator{\Conj}{c}
\newcommand{\conj}[2]{\Conj_{#1}\left(#2\right)}
\DeclareMathOperator{\codim}{codim}

\newcommand{\tildeG}{\scalebox{.9}{$\widetilde G$}}

\newcommand{\D}{\mathcal{D}}
\newcommand{\Dc}{\mathcal{D}_{\!c}}
\newcommand{\Do}{\mathcal{D}_{\!o}}
\newcommand{\tildeDo}[1]{\rlap{\raisebox{-0.2ex}{$\widetilde{\Do}$}}%
  \rlap{\smash{\textcolor{white}{\rule[-1.1ex]{2em}{2.7ex}}}}\Do(#1)}
\DeclareMathOperator{\supp}{supp}
\DeclareMathOperator{\crit}{Crit}

\DeclareMathOperator{\Hom}{Hom}

\DeclareMathOperator{\Lie}{\mathcal{L}} %

\newcommand\rst[1]{|_{#1}}

\renewcommand{\d}{d}
\newcommand{\intprod}{%
    \mathbin{\scalebox{1.5}{$\lrcorner$}}%
}
\AtBeginDocument{\renewcommand{\setminus}{\smallsetminus}}
\newcommand{\del}{\partial}
\newcommand{\R}{\mathbb R}

\newcommand{\Z}{\mathbb Z}
\newcommand{\N}{\mathbb N}

\newcommand{\Rel}{\mathcal{R}} %
\newcommand{\RelLeg}{\mathcal{S}} %

\title{Contactomorphism groups and Legendrian flexibility}

\author{Sylvain \textsc{Courte} \and Patrick \textsc{Massot}}

\AtEndDocument{\bigskip{\footnotesize%
    \noindent
    S.~Courte: \textsc{Université Grenoble Alpes, IF, CNRS, F-38000 Grenoble, France.} \par
    \noindent
    \texttt{sylvain.courte@univ-grenoble-alpes.fr} \par
    \addvspace{\medskipamount}
    \noindent
    P.~Massot: \textsc{Laboratoire de Mathématiques d’Orsay, Univ. Paris-Sud,
    CNRS, Université Paris-Saclay, 91405 Orsay, France.}\par
    \noindent
    \texttt{patrick.massot@u-psud.fr} \par
}}

\begin{document}

\maketitle

\begin{abstract}
  We explain a connection between the algebraic and geometric properties of
  groups of contact transformations, open book decompositions, and flexible
  Legendrian embeddings. The main result is that, if a closed contact manifold
  $(V, \xi)$ has a supporting open book whose pages are flexible Weinstein
  manifolds, then the connected component $G$ of the identity in its automorphism
  group is a uniformly simple group: for every
  non-trivial element $g$, every other element is a product of at most 
  $128(\dim V + 1)$ conjugates of $g^{\pm1}$. In particular any
  conjugation invariant norm on this group is bounded. We also prove
  the later statement still holds for the universal cover of $G$.
\end{abstract}

\section*{Introduction}

In this paper, $(V, \xi)$ will always denote a connected manifold
equipped with a cooriented contact structure.
We denote by $\D(V, \xi)$ the group of compactly supported diffeomorphisms of $V$
preserving $\xi$, equipped with the strong $C^\infty$-topology. This paper is about the
connected component  $\Do(V, \xi)$ of the identity in $\D(V, \xi)$, and about its
universal cover $\tildeDo{V, \xi}$. See the introduction of
\cite{Fabio_vrille} for information about the complementary question of
studying the mapping class group $\D(V, \xi)/\Do(V, \xi)$.

Klein's Erlangen program suggests to study $(V, \xi)$ through its automorphism
group, which could be any of the above three groups. Nothing is lost by this
perspective according to
\cite{Banyaga_McInerney} (see also \cite[Section 7.5]{Banyaga_book}) which
proved that every group isomorphism $\Phi \co \Do(V_1, \xi_1) \to \Do(V_2, \xi_2)$ (not
necessarily continuous) is induced by a contact isomorphism: there exists a diffeomorphism
$\varphi \co V_1 \to V_2$ such that $\varphi_*\xi_1 = \xi_2$ and $\Phi(g) = \varphi g\varphi^{-1}$ for every $g$ in 
$\Do(V_1, \xi_1)$.

The main known result about the algebraic structure of these groups is proved
in \cite{Rybicki}: both $\Do(V, \xi)$ and its universal cover are \emph{perfect}
groups (every element is a product of commutators), see also
\cite{Tsuboi_contact_simp} for the non-smooth case. Combined with the results of
\cite{Epstein_simplicity} this implies that $\Do(V, \xi)$ is \emph{simple}
(any non-trivial normal subgroup is the full group), see \cite[Theorem 2]{Banyaga_McInerney} or
\cite[Corollary 1.2]{Rybicki}. Note that $\tildeDo{V, \xi}$ is not simple in general
due to the exact sequence:
\[1\to \pi_1 \Do(V, \xi) \to \tildeDo{V, \xi} \to \Do(V, \xi) \to 1.\] 

Seemingly independently of this algebraic structure studies, one can seek an
interesting geometry on these groups. Inspired by the Hofer and Viterbo
distances in symplectic geometry, there have been several recent papers on
invariant norms on contact transformation groups, see
\cite{Sandon_norm, FRP, ColinSandon, Zapolsky, GKPS}, and the survey
\cite{Sandon_survey}.
A conjugation invariant norm on a group $G$ is a function $\nu \co G \to [0, \infty)$
satisfying the following properties:
\begin{itemize}
  \item $\nu(\Id) = 0$ and $\nu(g) > 0$ for all $g \neq \Id$.
  \item $\nu(gh) \leqslant \nu(g) + \nu(h)$ for all $g, h \in G$.
  \item $\nu(g^{-1}) = \nu(g)$ for all $g \in G$.
  \item $\nu(hgh^{-1}) = \nu (g)$ for all $g, h \in G$.
\end{itemize}
Bi-invariant distances are another point of view on the same objects. Such a
distance $d$ defines a norm $\nu = d(\cdot, \Id)$ and, starting from a norm $\nu$,
one gets a distance $d(f, g) = \nu(fg^{-1})$. Invariant norms can arise from
quasi-morphisms, see \cite[Section~1.1]{Bavard}.

Given that the groups we consider are huge (they remember everything about the
contact manifold), such a geometric structure is disappointing if the norm is
bounded or, equivalently, if the associated metric space has finite diameter.
\cite{FRP} proved that this always happens for norms on $\Do(\mathbb{S}^{2n+1}, \xi_0)$ 
or its universal cover\footnote{Here and elsewhere, we assume $n \geqslant 1$.}. By
contrast, based on work of \cite{Givental}, \cite{ColinSandon} proved that the universal cover of 
$\Do(\mathbb{R}P^{2n+1}, \xi_0)$ has an unbounded invariant norm.
Similar puzzling differences between spheres and projective spaces were observed
in the study of orderability of the corresponding groups in \cite{EKP}.
A  direct link between invariant norms and orderability is
provided by \cite{FRP} which proved that orderable contact manifolds with a
periodic Reeb flow have an unbounded invariant norm on the universal cover of
$\Do$, and by \cite{ColinSandon} which defines a norm assuming orderability.

\cite[Corollary~1.17]{EKP} explains how to deduce non-orderability of spheres
from the existence of a 2-subcritical Weinstein filling $W$. But the proof uses
only the existence of a  splitting $W = W' \times \mathbb{C}^2$, not the
flexibility of $W$, and not directly the flexibility of the relevant Legendrian
submanifolds (attaching spheres or stable manifolds). Hence one can argue that
there was no previously known direct link between Gromov's $h$-principle
dichotomy and the existence of unbounded norms on transformation groups.

Before describing such a link, we make contact with the algebraic discussion of
simplicity. A group being simple means concretely that for every element $f$ and
every non-trivial element $g$, one can write $f$ as a product of $N(f, g)$
conjugates $h_igh_i^{-1}$ or $h_ig^{-1}h_i^{-1}$, for some (finite) number 
$N(f, g)$. Independently of simplicity, if this holds for some $f$ and $g$ then,
by definition of invariant norms, $\nu(f) \leqslant N(f, g) \nu(g)$ for any
invariant norm $\nu$. Hence, invariant norms are all bounded as soon as there is
some $g$ such that $N(f, g)$ can be bounded independently of $f$.  A group is called
\emph{uniformly simple} if the number $N(f, g)$ can be bounded independently of
$f$ and $g$.

\begin{theoremintro}\label{thm:main}
  If a closed connected contact manifold $(V, \xi)$ has a supporting open book
  whose pages are flexible Weinstein manifolds, then for every non-trivial 
  $g \in \Do(V,\xi)$, every other element is a product of at most 
  $128(\dim V + 1)$ conjugates of $g^{\pm 1}$. The same holds for the universal
  cover $\tildeDo{V, \xi}$ as soon as $g$ does not lie above the identity. In
  particular, $\Do(V,\xi)$ is uniformly simple and all invariant norms on
  $\Do(V,\xi)$ or $\tildeDo{V, \xi}$ are bounded.
\end{theoremintro}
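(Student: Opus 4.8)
The plan is to reduce the global statement to a local "fragmentation plus displacement" argument, following the classical scheme for proving uniform perfectness/simplicity of transformation groups (Thurston, Mather, Tsuboi), but adapted to the contact open-book setting. The key insight to exploit is that a supporting open book with flexible Weinstein pages gives a great deal of room: the binding has a neighborhood contactomorphic to a standard piece, the complement fibers over the circle with page-shaped fibers, and flexibility of the pages lets us realize contact isotopies that are impossible in the rigid setting (this is where Gromov's $h$-principle enters, presumably via an earlier lemma in the paper producing contact isotopies supported near a flexible Legendrian or implementing a "squeezing" of a page into an arbitrarily small subset). So the first step is to fix, once and for all, a small ball $B \subset V$ (say inside a single Darboux chart contained in the complement of the binding), and prove a \emph{portability lemma}: for any compactly supported $f \in \Do(V,\xi)$ there is a bounded number — I expect something like $4(\dim V + 1)$, accounting for a handle-type decomposition of $V$ relative to the open book — of elements $\phi_i \in \Do(V,\xi)$ such that each $\phi_i$ conjugates the support of a "piece" of $f$ into $B$. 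Concretely one writes $f = f_1 \cdots f_m$ with each $f_i$ supported in a set contact-isotopic to $B$; the open book structure is what bounds $m$, because one can use the binding neighborhood and the monodromy to sweep pages around, and flexibility is what lets the pages (which are noncompact) be compressed into a ball despite having the "wrong" topology.

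The second step is the \emph{local uniform simplicity} statement: inside $B$, or rather for elements supported in a displaceable ball, every such element is a product of a bounded number of conjugates of any fixed nontrivial $g$. This is the heart of the matter and I expect it to be where the constant $128$ really comes from. The standard trick is the commutator/Anosov–Katok "two-to-one" maneuver: if $h$ is supported in a ball $B$ and $\psi \in \Do(V,\xi)$ displaces $B$ (i.e. $\psi(B) \cap B = \emptyset$), then $h = [h, \psi] \cdot (\psi h^{-1} \psi^{-1})^{\vphantom{1}}$ expresses $h$ using conjugates of $h$, and iterating an infinite-swindle/telescoping argument in a disjoint union of shrinking balls kills the dependence of the conjugacy length on $h$ — this is precisely the mechanism that forces invariant norms to be bounded and is available here because, by the portability lemma, we may always retreat to a ball small enough to be displaced inside another ball. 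To convert "bounded number of conjugates of $h$" (for $h$ our piece of $f$) into "bounded number of conjugates of $g^{\pm1}$" for an \emph{arbitrary} fixed nontrivial $g$, I would first note that $g$, being nontrivial, moves some point, hence (by a standard transitivity argument for $\Do(V,\xi)$ acting on $V$, and shrinking supports) some conjugate of $g$ or of $[g, \cdot]$ is itself supported in a small ball and is nontrivial there; then the commutator identity $[a,b] = (a b a^{-1})\, b^{-1}$ shows any commutator is a product of two conjugates of $b^{\pm1}$, so a product of $k$ commutators of elements supported in $B$ is a product of $2k$ conjugates of $g^{\pm1}$ once we know a ball-supported nontrivial element is a single conjugate of $g^{\pm1}$ up to a bounded correction. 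Bookkeeping these multiplicative constants — fragmentation length $\lesssim \dim V$, swindle cost $O(1)$, commutator-to-conjugate cost $\times 2$, and the cost of manufacturing a ball-supported element from $g$ — should land at $128(\dim V+1)$.

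The third step is the passage to the universal cover $\tildeDo{V,\xi}$. Here the point is that all of the identities used above — fragmentation, the commutator swindle, conjugation tricks — are \emph{word identities} in the group, so they lift verbatim to $\tildeDo{V,\xi}$ provided the isotopies we use to realize conjugating elements $\phi_i$, $\psi$ can be chosen nullhomotopic, or at least canonically, so that lifting is unambiguous. The hypothesis "$g$ does not lie above the identity" is exactly what is needed so that the lift $\tilde g$ still has a nontrivial \emph{image} in $\Do(V,\xi)$ and hence the ball-localization of Step 2 applies to $\tilde g$ rather than degenerating into the central kernel $\pi_1\Do(V,\xi)$. I would organize this so that Steps 1–2 are proved for a general group $G$ acting on $V$ through $\Do(V,\xi)$ with contractible-support isotopies available, and then both $G = \Do(V,\xi)$ and $G = \tildeDo{V,\xi}$ are instances.

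The main obstacle, I expect, is Step 1: bounding the fragmentation length by a constant (rather than merely finitely many pieces depending on $f$) \emph{and} compressing each noncompact page-piece into a fixed ball. This is precisely the step that cannot work in a general contact manifold and that forces the flexible-Weinstein-page hypothesis; it presumably relies on an $h$-principle for contact isotopies adapted to open books — analogous to how $2$-subcriticality yields a product splitting in \cite{EKP} — and on using the binding as a "parking lot" into which mass can be pushed. Getting the explicit numeric bound through this step, while keeping the Weinstein/flexibility input as a black box from an earlier section, is the delicate part; everything downstream is formal group theory plus the displacement swindle.
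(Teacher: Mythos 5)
There is a genuine gap, and it sits exactly where you predicted the difficulty would be: your Step~1 ``portability lemma'' (every $f$ is a product of a bounded number of pieces each supported in a set contact-isotopic to a fixed Darboux ball) is not available, and neither the open book nor flexibility of the pages delivers it. Flexibility enters the paper in a quite different way: it makes the isotropic skeleta $L_\pm$ of two pages \emph{loose}, so that Murphy's $h$-principle (after a transversality/``cleaning'' step for the isotopy restricted to $L_-$) lets one deform a general contact isotopy, with fixed endpoints, into a composition of only four pieces --- two supported in Darboux balls and two supported in the \emph{complement of $L_\pm$} (Theorem~\ref{thm:decomposition}). These last two pieces live in contact handlebodies that compress onto the Legendrian complexes $L_\pm$, not onto points; they cannot be squeezed into balls, and no bounded ball-fragmentation of an arbitrary contactomorphism is known. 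Your proposal has no mechanism to handle such pieces, because it omits the essential algebraic input: Rybicki's perfectness theorem. The paper's Proposition~\ref{prop:huit_conj} (the BIP/Tsuboi scheme) converts a \emph{product of commutators} supported in a compressible set into at most eight conjugates of an element displacing the core; perfectness is what lets one write each of the four pieces as such a product. An ``infinite swindle in shrinking balls,'' which you invoke instead, is not used and is problematic in the $C^\infty$ contact category; displacement alone, without perfectness, does not bound the number of conjugates.

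For comparison, the paper's count is: the displacing element $\psi_\epsilon$ (a cut-off Reeb flow displacing $L_-\cup L_+$ by Proposition~\ref{prop:displacing}) is fragmented by Lemma~\ref{lem:frag} into $n+1$ ball-supported factors, $n=(\dim V-1)/2$, using the cell structure of the skeleton --- this is the only place a bounded fragmentation occurs, and only for this special germ near $L$; Theorem~\ref{thm:trente_deux_conj} writes any $f$ as $4\times 8=32$ conjugates of $\psi_\epsilon^{\pm1}$, hence as $32(n+1)$ ball-supported elements; and Lemma~\ref{lem:ball_conj} (again perfectness plus Heisenberg-dilation compression) turns each of these into $8$ conjugates of an arbitrary point-moving $g^{\pm1}$, giving $256(n+1)=128(\dim V+1)$. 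Your Step~3 on the universal cover is in the right spirit (the hypothesis that $g$ does not lie over the identity is indeed used to get a displaced point downstairs, and the identities are arranged to lift, via homotopies of conjugates as in Lemma~\ref{lem:conj_homotopy}), but as written the core of the argument --- replacing the decomposition theorem and perfectness by bounded ball-fragmentation plus a swindle --- would not go through.
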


The relevant definitions will be recalled in \cref{sec:openbook}, including the
special case of $4$-dimensional pages (see \cref{rem:flexible4}).
Note that using \cite{Tsuboi_contact_simp} instead of \cite{Rybicki}
allows to prove the same result about the group of $C^r$ contact transformations
with $1 \leqslant r < 1+ \dim(V)/2$.

The proof of \cref{thm:main} explicitly goes through flexibility of loose
Legendrian embeddings. From the point of view of this theorem, the observed
difference between spheres and projective spaces is not so much related to
fillings, that are external to the contact manifold at hand, but rather to pages
of open books, that are internal symplectic manifolds. The standard contact
structure on $\mathbb{S}^{2n+1}$ has a well known open book whose pages are
$\mathbb{C}^n$, the most extreme example of a flexible Weinstein manifold (hence
\cref{thm:main} includes the result of \cite{FRP} about spheres). On the other
hand, the most well known open book supporting the standard contact structure on
$\mathbb{R}P^{2n+1}$ has page $T^*\mathbb{R}P^n$. 
Since we don't know any other way to obstruct existence of
supporting open books with flexible pages, we state the following corollary
(where projective space could be replaced by any manifold where an unbounded
norm is known to exist).

\begin{corollaryintro}
  The standard contact structure on projective space has no supporting open book
  with flexible Weinstein pages.
\end{corollaryintro}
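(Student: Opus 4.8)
The plan is to argue by contradiction, feeding \cref{thm:main} into the known existence of an unbounded invariant norm on the universal cover of the contactomorphism group of projective space.

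Concretely, suppose that $(\mathbb{R}P^{2n+1}, \xi_0)$, with $n \geqslant 1$, did admit a supporting open book whose pages are flexible Weinstein manifolds. Then \cref{thm:main} would apply verbatim, and every conjugation invariant norm on $\Do(\mathbb{R}P^{2n+1}, \xi_0)$ \emph{and on its universal cover $\tildeDo{\mathbb{R}P^{2n+1}, \xi_0}$} would be bounded. On the other hand, as recalled in the introduction, \cite{ColinSandon}, building on \cite{Givental}, constructs an unbounded conjugation invariant norm on $\tildeDo{\mathbb{R}P^{2n+1}, \xi_0}$, using that $(\mathbb{R}P^{2n+1}, \xi_0)$ is orderable with periodic Reeb flow (this is also covered by the general criterion of \cite{FRP}). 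These two statements contradict each other, so no supporting open book with flexible Weinstein pages can exist.

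The only point deserving a word of care is that the unbounded norm produced by \cite{ColinSandon} lives on the universal cover rather than on $\Do$ itself; but \cref{thm:main} asserts boundedness of invariant norms on $\tildeDo{V,\xi}$ as well (for $g$ not above the identity), so the contradiction is genuine. Beyond this bookkeeping there is no real obstacle: the entire substance is already contained in \cref{thm:main} together with the cited norm construction, and the corollary is a purely formal consequence. The same reasoning shows, more generally, that a closed contact manifold carrying an unbounded conjugation invariant norm on $\Do$ or on $\tildeDo$ has no supporting open book with flexible Weinstein pages, which is why we singled out projective space only as the most familiar instance.
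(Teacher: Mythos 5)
Your argument is correct and is exactly the one the paper intends: \cref{thm:main} would force all invariant norms on $\tildeDo{\mathbb{R}P^{2n+1}, \xi_0}$ to be bounded, contradicting the unbounded norm of \cite{ColinSandon} built on \cite{Givental}, and your remark that the relevant norm lives on the universal cover (which \cref{thm:main} also covers) is precisely the right point of care. Nothing further is needed.
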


The known examples of contact manifolds satisfying the hypotheses of
\cref{thm:main} are obtained as the ideal contact boundary of 
$W\times \mathbb{C}$ for a flexible Weinstein manifold $W$ (this corresponds to
the case where the monodromy is the identity). This includes the class of
contact manifolds that \cite[Corollary~1.17]{EKP} prove to be non-orderable.
Hence one can ask whether this result extends to all contact manifolds supported
by open books with flexible pages.

The proof of \cref{thm:main} goes through the following result, where the
dimension of the manifold does not appear, and which allows to compute the
Colin-Sandon norms.

\begin{theoremintro}\label{thm:trente_deux_conj}
  Let $(V, \xi)$ be a closed connected contact manifold. Let $\psi_t$ be a positive (or
  negative) contact isotopy. If $\xi$ is supported by an open book with flexible
  page, and $\epsilon > 0$ is small enough, then every element of $\Do(V, \xi)$ or its
  universal cover is a product of at most 32 conjugates of $\psi_\epsilon^{\pm1}$. 
  In particular, the oscillation and discriminant (pseudo-)norms of Colin and
  Sandon are bounded by $32$.
\end{theoremintro}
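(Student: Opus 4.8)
The plan is to run a fragmentation-and-displacement argument in the spirit of Burago--Ivanov--Polterovich and Tsuboi, with the flexibility hypothesis supplying the ingredient such arguments usually lack on a closed manifold: a decomposition of $V$ into a \emph{bounded} number (independent of $\dim V$) of pieces that can be displaced by iterates of a positive contact isotopy. I would proceed in three stages. First, a \emph{comparison step} reducing the statement for an arbitrary positive $\psi_t$ to the analogous statement for one fixed model positive contactomorphism built from the open book. Second, a \emph{fragmentation step} writing any $f\in\Do(V,\xi)$ (or its lift in $\tildeDo{V, \xi}$) as a product of a bounded number of contactomorphisms each supported in a displaceable set. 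Third, the classical \emph{displaceable support $\Rightarrow$ few conjugates} mechanism applied to each factor, followed by bookkeeping to reach the constant $32$.

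For the comparison step, an adapted open book carries a canonical positive contact isotopy $R_t$ that ``rotates the pages'': it rotates the disk factor in a neighbourhood of the binding $B$, advances the $S^1$-coordinate on the mapping-torus part, and is interpolated near $B$. Given any positive $\psi_t$, compactness of $V$ together with openness of the positivity condition lets one sandwich a small rotation between reparametrised conjugates of $\psi_\epsilon^{\pm1}$: there is a fixed small $\epsilon'$ and a dimension-independent number of elements $a_i$ and signs with $R_{\epsilon'} = \prod_i a_i\psi_\epsilon^{\pm1}a_i^{-1}$, and symmetrically for a negative $\psi_t$. After this step it suffices to express every $f$ as a product of a bounded number of conjugates of $R_{\epsilon'}^{\pm1}$.

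For the fragmentation and displacement steps, I would use the Liouville flow of the pages to retract $V\setminus B$ onto an arbitrarily small neighbourhood of an isotropic-or-Legendrian subset $\Lambda$ of $V$, of the shape ``skeleton of the page $\times\,S^1$ glued to the binding''. Because the page is a \emph{flexible} Weinstein manifold, $\Lambda$ is loose, so by the Murphy--Eliashberg $h$-principle for loose Legendrians there is a compactly supported contact isotopy of $V$ carrying a neighbourhood of $\Lambda$, and more precisely carrying ``half of the open book'', into a region that a rotation $R_{k\epsilon'}$ displaces, with $k$ and the region independent of $\dim V$. Covering $V$ by two such flexibility-straightened halves and fragmenting $f=f_1f_2$ with $f_i$ supported in a displaceable piece $D_i$, each $f_i$ becomes a product of a bounded number of conjugates of $R_{\epsilon'}^{\pm1}$ by the standard displacement lemma: here the rotation provides the many pairwise-disjoint translates $R_{\epsilon'}^j(D_i)$ that make that lemma effective even though $V$ is closed. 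Tracking the constants --- two halves, the displacement lemma, the comparison step --- is arranged to give $32$; the whole argument runs verbatim on $\tildeDo{V, \xi}$ since a positive isotopy furnishes a canonical lift of $R_{\epsilon'}$ and conjugation is unambiguous upstairs. The assertion about the Colin--Sandon norms is then immediate: for $\epsilon$ small, $\psi_\epsilon^{\pm1}$ and each of its conjugates has oscillation and discriminant norm $\leqslant 1$, so a $32$-fold product has norm $\leqslant 32$.

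The main obstacle is exactly the displaceable decomposition with a dimension-independent number of pieces. On a general closed contact manifold no such decomposition can exist --- otherwise Hofer-type invariant norms would be bounded --- so the content of the theorem is precisely that flexibility of the pages forces it. Making the loose-Legendrian $h$-principle yield honestly displaceable pieces, with explicit universal constants, and in a way compatible with passing to the universal cover, is where the real work lies; the comparison step and the displacement lemma are, by comparison, routine.
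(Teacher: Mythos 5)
There are two genuine gaps, both at the places your outline waves at rather than proves. First, the ``fragmentation step''. Writing an arbitrary $f\in\Do(V,\xi)$ as $f_1f_2$ with each $f_i$ supported in one of two prescribed displaceable pieces, with the number of factors bounded independently of $f$, is not something any standard fragmentation argument gives: usual fragmentation produces a number of factors depending on $f$. This uniform decomposition is exactly the hard geometric content of the theorem, and the paper's route to it is quite different from what you sketch: it is not a two-piece cover of $V$ by ``flexibility-straightened halves'' displaced by a page rotation, but a four-piece decomposition $f_t=g_t\circ f^-_t\circ f^+_t\circ g'_t$ (the decomposition theorem), where $g,g'$ are supported in Darboux balls and $f^\pm$ are supported away from the isotropic skeleta $L_\pm$ produced by the open book (via Giroux convexity, \cref{prop:leg_skeleta}). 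Obtaining even that requires perturbing to ``clean'' isotopies and several carefully staged applications of Murphy's $h$-principle to push the Legendrian trace of the skeleton off $L_+$; your one-line appeal to looseness to ``carry half of the open book into a displaceable region'' does not produce such a decomposition, and nothing in Murphy's theorem displaces codimension-zero halves of $V$.

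Second, the ``displaceable support $\Rightarrow$ few conjugates'' mechanism is not routine in the form you use it. To convert an \emph{arbitrary} element supported in a set $U$ into a bounded number of conjugates of $\psi_\epsilon^{\pm1}$ one needs (i) Rybicki's perfectness theorem to write it as a product of commutators, and (ii) the crunching lemma, which requires \emph{all} iterates $\varphi^i(U)$, $i\geqslant 0$, to be pairwise disjoint -- i.e.\ infinitely many disjoint translates, supplied in the paper by a compressing flow onto the skeleton (or onto a point, for the Darboux-ball pieces), not by displacement alone. Your rotation $R_{\epsilon'}$ of the pages gives only finitely many disjoint translates of $D_i$ before the angle coordinate wraps around, so the crunching step fails and no uniform bound on the number of commutators, hence of conjugates, comes out. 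Relatedly, your ``comparison step'' (sandwiching a fixed rotation between a bounded number of conjugates of $\psi_\epsilon^{\pm1}$) is itself an unproved uniform statement and is unnecessary: the paper works directly with $\psi_\epsilon$, using only that a small positive isotopy displaces the skeleta $L_-\cup L_+$ (\cref{prop:displacing}) and a point in each Darboux ball, then applies \cref{prop:huit_conj} to each of the four pieces to get $4\times 8=32$ conjugates. The final remark on the Colin--Sandon norms is fine once the $32$-conjugate statement is in place.
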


Another important point is that the definition of invariant norms asserts no
relation between $\nu$ and any a priori given topology on the group. The norm can
be defined purely in the algebraic world. A popular algebraic example is the
commutator length on perfect groups, which is defined as the minimal number of
factors required to express an element as a product of commutators (under the
flexible page assumption, the proof of \cref{thm:trente_deux_conj} will show
that eight factors is always enough). On the symplectic side, the topology
induced by the Hofer distance is very different from the smooth topology, and
related to the $C^0$ topology. Here we prove the following general result,
without any open book assumption.

\begin{theoremintro}\label{thm:c0}
  Let $(V, \xi)$ be any closed connected contact manifold. Let $\psi_t$ be a
  positive or negative contact isotopy. For $\epsilon > 0$ small enough, there
  exists a $C^0$ neighborhood $\mathcal{U}$ of $\Id$ in $\Do(V, \xi)$ or its
  universal cover such that every element of $\mathcal{U}$ is a product of at
  most 16 conjugates of $\psi_\epsilon^{\pm1}$. In particular all invariant norms are
  bounded on $\mathcal{U}$.
\end{theoremintro}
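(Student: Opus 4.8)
The plan is to reduce the statement to a local, chart-by-chart fragmentation argument. First I would fix a positive contact isotopy $\psi_t$ and choose $\epsilon>0$ small enough that $\psi_\epsilon$ is supported in some Darboux ball $B$ and is "big" there, in the sense that its time-$\epsilon$ map displaces a smaller concentric ball $B'$ and, more importantly, that any contactomorphism supported in $B'$ and $C^0$-close to the identity can be written as a bounded product of conjugates of $\psi_\epsilon^{\pm1}$. The key geometric input here is the standard portable/displaceable ball trick from Hofer-type geometry: if $\varphi$ is supported in $B'$, one builds $\varphi$ out of two commutators of conjugates of $\varphi$ itself via an infinite-swindle/telescoping construction, and then one feeds in the fact that $\psi_\epsilon$ dominates the scale of $B'$ so that the local pieces are themselves conjugates of $\psi_\epsilon^{\pm1}$ up to bounded count. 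This is where the constant $16$ (and the $8$ commutators mentioned in the discussion of Theorem~B) will come from, and it is the step I expect to be the main obstacle: making the fragmentation uniform in the number of factors while staying in the $C^0$ category and keeping track of the universal-cover version requires care, since lifts of conjugates must be chosen coherently and the swindle must be performed with a fixed compact support.

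Next I would set up the $C^0$ neighborhood $\Uu$ of $\Id$ in $\Do(V,\xi)$. Cover $V$ by finitely many Darboux balls, and use a contact fragmentation lemma: there is a $C^0$ neighborhood $\Uu$ such that every $g\in\Uu$ factors as a bounded-length product $g=g_1\cdots g_k$ with each $g_i$ supported in one of the cover balls and each $g_i$ itself $C^0$-small. The number $k$ depends only on the cover, not on $g$; this is the contact analogue of the classical fragmentation argument (as in the smooth/volume-preserving settings), and it works $C^0$-continuously because multiplication and the support conditions are $C^0$-closed. At this stage $g$ is a bounded product of $C^0$-small contactomorphisms each supported in a single Darboux ball.

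Then I would combine the two ingredients: each ball in the cover can be contact-isotoped (via an element of $\Do(V,\xi)$, since $V$ is connected) into the distinguished ball $B$ where $\psi_\epsilon$ lives, so conjugating each $g_i$ by such an ambient contactomorphism turns it into a $C^0$-small contactomorphism supported in $B'\subset B$. By the local step, each such $g_i$ (after conjugation) is a product of at most some fixed number of conjugates of $\psi_\epsilon^{\pm1}$; undoing the ambient conjugation only re-conjugates these factors, so they remain conjugates of $\psi_\epsilon^{\pm1}$. Multiplying over the bounded number of pieces $g_i$ gives $g$ as a product of a bounded number of conjugates of $\psi_\epsilon^{\pm1}$. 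The numerology must be tightened so that the final bound is $16$: presumably one arranges the cover so that only two balls are needed after a preliminary move (two commutators of conjugates give $8$ factors per ball by the swindle, hence $16$), or more simply one proves the sharper local statement that a single $C^0$-small ball-supported contactomorphism is $8$ conjugates of $\psi_\epsilon^{\pm1}$ and that a $C^0$-small $g$ is a product of two such. Finally, the universal-cover statement follows verbatim once all isotopies used (the fragmentation isotopies, the ambient conjugating isotopies, and $\psi_t$ itself) are fixed paths from the identity: the entire construction lifts, the neighborhood $\Uu$ lifts to a neighborhood of the identity in $\tildeDo{V,\xi}$, and the count is unchanged. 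The concluding sentence about invariant norms is then immediate: $\nu(g)\le 16\,\nu(\psi_\epsilon)$ for all $g\in\Uu$, so $\nu$ is bounded on $\Uu$.
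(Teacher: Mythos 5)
There are two genuine gaps, and the first sentence of your argument already cannot be carried out. A \emph{positive} contact isotopy of a closed manifold moves every point transversely to $\xi$ (its contact Hamiltonian is positive everywhere), so no choice of $\epsilon$ makes $\psi_\epsilon$ ``supported in some Darboux ball $B$''; $\psi_\epsilon$ is given, not constructed, and its support is all of $V$. What is true, and what the paper actually uses (\cref{lem:ball_conj}), is much weaker: for small $\epsilon$, $\psi_\epsilon$ maps some interior point $p$ of a Darboux ball $B$ into $B\setminus\{p\}$, and this displacement, combined with Rybicki's perfectness theorem and the compression/displacement machinery of \cref{prop:huit_conj} (commutator crunching and trading), already shows that every element supported in $\Int B$ is a product of $8$ conjugates of $\psi_\epsilon^{\pm1}$. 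So your ``local step'' is repairable, but note it cannot avoid perfectness: the swindle needs the piece to be written as a product of commutators first.

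The fatal gap is the fragmentation step. The claimed lemma --- a $C^0$ neighborhood of $\Id$ in which every element factors into a \emph{bounded} number of pieces, each supported in a ball of a fixed finite Darboux cover --- is not a known result, and ``multiplication and the support conditions are $C^0$-closed'' is not an argument: fragmentation is proved by cutting off generating Hamiltonians along an isotopy, which neither follows from mere $C^0$-smallness of the time-one map nor yields a uniform bound on the number of factors. Even granting it, a cover needs $k$ balls with $k>2$ in general, giving $8k>16$ conjugates. The paper's route is different and is where the global input enters, which your proposal never uses: Giroux's existence of a supporting open book with Weinstein pages plus \cref{prop:leg_skeleta} produce two isotropic complexes $L_\pm$ and compressing contact flows; a $C^0$-small element (represented, in the universal cover case, by a $C^0$-small isotopy) keeps $L_-$ disjoint from $L_+$, so the Hamiltonian cut-off of \cref{lem:disjoint} splits it into exactly \emph{two} factors $f^-\circ f^+$ supported in the complements of $L_\mp$ --- large open sets, not balls --- and each factor is $8$ conjugates of $\psi_\epsilon^{\pm1}$ by \cref{prop:huit_conj}, using \cref{prop:displacing} to see that $\psi_\epsilon$ displaces $L_-\cup L_+$. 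That is how $2\times 8=16$ arises; your proposal as written neither produces two pieces nor controls the count.
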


In the above result, belonging to a $C^0$ neighborhood of identity in the
universal cover means being represented by a path which is $C^0$-close to
identity for all time. A priori this is more restrictive than the pull back of
$C^0$ topology from $\Do(V, \xi)$.

The proofs of all these results rely on geometric decompositions of contact
transformations following the strategies of 
\cite{BIP, Tsuboi_perfectness, Tsuboi_simplicity} which proved analogous
theorems for diffeomorphism groups (without any flexibility assumption).
The decomposition statement is in terms of Giroux's contact handlebodies, ie
open contact manifolds that retract by contact isotopy into arbitrarily small
neighborhoods of isotropic complexes. The relevance of such kinds of compressions
was explicitly pointed out by \cite{BIP} and already imported into contact
topology by \cite{FRP}, with a seemingly more general definition of so called
portable contact manifolds, but it seems that contact handlebodies are the only
known examples.

\begin{theoremintro}
  \label{thm:decomp_intro}
  Let $(M, \xi)$ be a closed connected contact manifold supported by an open
  book with flexible pages. Every contact isotopy is homotopic with fixed
  end-points to the composition of four contact isotopies with compact
  support in the interior of contact handlebodies.
\end{theoremintro}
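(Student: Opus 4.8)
\emph{Proof plan.} The plan is to cover $M$ by a bounded number of contact handlebodies extracted from the open book, and then to fragment an arbitrary contact isotopy with respect to this cover, following the strategy used for diffeomorphism groups in \cite{BIP, Tsuboi_perfectness, Tsuboi_simplicity}. For the first step, fix an adapted contact form for $\xi$ and write $\pi\co M\setminus B\to S^1$ for the open book fibration, with binding $B$ and pages the (flexible Weinstein) interiors of Liouville domains with ideal contact boundary $B$. The key point is that the binding neighbourhood $B\times D^2$ is \emph{not} a contact handlebody — $B$ is a contact submanifold, not isotropic — so the cover has to be built out of the pages. Pick a regular value $\theta_0$ and let $\bar P=\overline{\pi^{-1}(\theta_0)}$ be the corresponding closed page. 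By Giroux's construction of contact handlebodies from ideal Liouville domains (recalled in \cref{sec:openbook}), a neighbourhood $A$ of $\bar P$ in $M$ is a contact handlebody, retracting by contact isotopy into arbitrarily small neighbourhoods of the isotropic skeleton of the page; this is where the flexibility of the page is used, to put that skeleton in the form the construction requires. On the other hand $B':=M\setminus\bar P=\pi^{-1}(S^1\setminus\{\theta_0\})$ is contactomorphic to the interior of the page times an interval, carrying a contactization-type contact form, hence is again a contact handlebody, retracting onto the skeleton times a point. Since $A\supset\bar P$, the two open sets $A$ and $B'$ cover $M$.

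For the second step, let $\phi_t$, $t\in[0,1]$, $\phi_0=\mathrm{id}$, be a contact isotopy generated by a contact Hamiltonian $h_t$. Pick a partition of unity $\{\chi_A,\chi_B\}$ subordinate to $\{A,B'\}$ and split $h_t=\chi_A h_t+\chi_B h_t$. Let $\phi^A_t$ be the contact isotopy generated by $\chi_A h_t$; it is supported in $A$, and $\rho_t:=(\phi^A_t)^{-1}\phi_t$ is generated by the pushforward under $(\phi^A_t)^{-1}$ of the contact vector field of $\chi_B h_t$, hence is supported in $(\phi^A_t)^{-1}(B')$, which is itself a contact handlebody because $\phi^A_t$ is a contactomorphism. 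Iterating — splitting off at each stage a factor supported in the current image of one of the two handlebodies, and using the compressibility of a contact handlebody to pull the moving supports towards its isotropic skeleton between successive splittings, so that they do not spill out — expresses $\phi_t$, up to homotopy with fixed endpoints, as a concatenation of contact isotopies each compactly supported in the interior of a contact handlebody; the same bookkeeping as in \cite{BIP, Tsuboi_perfectness, Tsuboi_simplicity}, now carried out for contact isotopies, shows that four factors suffice. Finally, reparametrising the concatenation onto $[0,1]$ and, wherever a factor has been conjugated by a contact isotopy $\eta_t$ of $M$ with $\eta_0=\mathrm{id}$, conjugating back inside the homotopy (the conjugation path $g_t\mapsto\eta_t g_t\eta_t^{-1}$ is homotopic with fixed endpoints to the trivial operation), one obtains a composition of four contact isotopies with compact support in the interiors of contact handlebodies, homotopic with fixed endpoints to $\phi_t$.

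The main obstacle is the first step: since the binding neighbourhood fails to be a contact handlebody, one is forced to cover it by a neighbourhood of a page, and one must check that Giroux's construction genuinely turns a neighbourhood of a flexible Weinstein page inside a closed contact manifold into a contact handlebody in the sense required (a neighbourhood retracting by contact isotopy into arbitrarily small neighbourhoods of an isotropic complex) — it is precisely here that the flexible page hypothesis enters. The secondary difficulty is the support control in the second step: the supports of the factors are moved around by the intermediate isotopies and must be prevented from leaving the union of contact handlebodies, which is exactly what the compression towards the isotropic skeleta achieves between successive splittings.
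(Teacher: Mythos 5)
There is a genuine gap, and it sits exactly where the paper has to work hardest. Your fragmentation step does not produce factors supported in \emph{fixed} contact handlebodies: after splitting $h_t=\chi_Ah_t+\chi_Bh_t$, the second factor $\rho_t=(\phi^A_t)^{-1}\phi_t$ is generated by a vector field supported in the moving set $(\phi^A_t)^{-1}(\supp\chi_B)$, and iterating only makes the supports depend on all previous partial compositions; the ``compression between successive splittings'' you invoke cannot repair this, because there is a real geometric obstruction rather than a bookkeeping one. The handlebodies available here are complements of neighborhoods of the two page skeleta $L_\pm$ (plus Darboux balls), so ``supported in a handlebody'' means ``supported away from $L_-$ or away from $L_+$''; to split an isotopy that way one needs $f_t(L_-)\cap L_+=\emptyset$ for all $t$ (\cref{lem:disjoint}). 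But $[0,1]\times L_-$ and $L_+$ have complementary dimensions in $[0,1]\times V$, so a generic contact isotopy drags $L_-$ \emph{through} $L_+$ at finitely many unavoidable times; no perturbation or cut-off removes these crossings. The paper's proof (\cref{thm:decomposition}, via \cref{prop:cleaning} and \cref{prop:main_decomp}) exists precisely to kill these collisions: one makes the isotopy clean by the transversality theorem, then uses looseness of $L_-$ in the complement of $L_+$ (which is where page flexibility enters, through \cref{prop:loose_stability}) and several applications of Murphy's $h$-principle (\cref{thm:murphy}) to re-route the Legendrian disks of $L_-$ around $L_+$, at the cost of two correction isotopies supported in Darboux balls. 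The count ``four'' is not a fragmentation count at all: it is two Darboux-ball factors plus the two factors $f^\mp$ supported away from $L_\mp$ produced by \cref{lem:disjoint}. In contrast, the BIP/Tsuboi bookkeeping you cite works in the smooth category because the relevant cores are displaceable by general position; here the skeleta are middle-dimensional Legendrian-type complexes and the isotopies must stay contact, so that argument has no analogue of the collision-removal step.

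A secondary point: your use of flexibility is misplaced. That the complement of a (small neighborhood of a) skeleton is a contact handlebody --- equivalently, that a suitably large neighborhood of a closed page compresses by a contact flow onto the opposite skeleton --- is Giroux's convexity statement \cref{prop:leg_skeleta} and needs only \emph{Weinstein} pages; it is not where the flexible hypothesis is consumed. Moreover a small neighborhood $A$ of the closed page $\bar P$ is not obviously compressible into itself (the natural pseudo-gradient flow pushes the binding out of $A$ toward the opposite page); what is true, and what the paper uses, is that $V$ minus a neighborhood of $L_+$ is a handlebody containing $\bar P$ and the binding. So both the cover and, more seriously, the decomposition mechanism need to be replaced by the cleaning-plus-Murphy argument; as written, the proposal does not prove the theorem.
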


\paragraph{Outline}

\cref{sec:algebra} is an expository section recalling elementary but fundamental
tools in the algebraic study of transformation groups, with special care devoted
to the universal cover case. 
\cref{sec:loose} gathers the statements we need from Murphy's study of
Legendrian flexibility, and carefully proves a folklore stability result: the
symplectization of a loose Legendrian embedding is loose in the contactization
of the symplectization of the ambient manifold.
\cref{sec:openbook} recalls somewhat under-documented aspects of Giroux's theory
of convexity in contact topology and open book decompositions, relating
\cite{Giroux_these} and \cite{Giroux_ICM}.
\cref{sec:transversality_transfo} proves some general transversality
theorem for multiple jets of families of contact transformations.
\cref{sec:cleaning} uses this general result to show that generic contact
isotopies satisfy a list of conditions that are helpful in the proof of the above
decomposition theorem.
\cref{sec:decomposition} proves the decomposition theorem by reduction to
the generic case and using Murphy's result.
\cref{sec:commutators_and_fragments} combines the decomposition theorem,
algebraic tools from \cref{sec:algebra} and Rybicki's theorem to prove the main
results (Giroux's existence of supporting open books is also used for \cref{thm:c0}).

\begin{acknowledgments}
  We thank Emmy Murphy for interesting conversations about her flexibility
  result, and about looseness of cores of flexible handles. We thank Frédéric
  Le Roux for bringing Tsuboi's work on diffeomorphism groups to our attention
  at a time when we knew only about the three dimensional case handled in
  \cite{BIP}. We thank Tomasz Rybicki for illuminating explanations about his
	perfectness proof, especially Lemma 8.6 from \cite{Rybicki}.
  The first author would like to thank Stéphane Guillermou for many
  stimulating discussions, especially about \cref{prop:displacing}.

  This work was partially funded by ANR grant Microlocal ANR-15-CE40-0007.
\end{acknowledgments}

\section{Displacement, compression and conjugates}
\label{sec:algebra}

In this mostly expository section, we review definitions and
observations in the algebraic study of transformation groups. These
observations are all elaborations on the fundamental observation that
transformations with disjoint supports commute. They originate at least as far as
\cite{Anderson} and play a key role in \cite{BIP, Tsuboi_perfectness,
Tsuboi_simplicity, Tsuboi_even}, which are our main sources for this section.
In the following exposition, we will pay somewhat more attention to the
universal cover of the relevant group (expliciting uses of
\cref{lem:conj_homotopy} below), and bypass some technical definitions
that are not necessary for us.

In this section we fix a smooth manifold $M$ and a connected and locally
contractible subgroup $G$ of the group $\Dc(M)$ of smooth compactly supported
diffeomorphisms of $M$. Other regularity classes and weaker topological
assumptions would work as well, and we will only use $G = \Do(V; \xi)$ in later
sections, but we want to emphasize that the current section involves no contact
geometry. An \emph{isotopy} in $G$ is a smooth path $t \mapsto \varphi_t$
starting from $\Id$ in $G$, where smooth means that $(t, x) \mapsto \varphi_t(x)$ is
smooth. We denote by $\tildeG$ the space of smooth homotopy classes of
isotopies in $G$, with fixed end-points.
This means that two isotopies $\varphi^0$ and $\varphi^1$ are homotopic if there is a smooth
map $\Phi \co M \times [0, 1] \times [0, 1] \to M$ such that, for all $t$, $s$ and $x$,
$\Phi(x, t, 0) = \varphi^0_t(x)$, $\Phi(x, t, 1) = \varphi^1_t(x)$, $\Phi(x, 0, s) = x$,
$\Phi(x, 1, s) = \varphi^0_1(x) = \varphi^1_1(x)$. In particular this implies $\varphi^0_1 = \varphi^1_1$ and this
common value provides a map $\pi \co \tildeG \to G$ which is a universal
cover for $G$.

Time-wise composition gives a group law on the set of isotopies which descends
to a group law on $\tildeG$ such that $\pi$ is a group morphism.
Time reparametrization of isotopies act trivially on $\tildeG$ hence one
can also define the group law on $\tildeG$ by concatenation of isotopies and
suitable time reparametrization.

A subset $A \subset M$ is \emph{displaced} by a diffeomorphism $g$ if $g(A) \cap A = \emptyset$.
The \emph{support} $\supp(g)$ of an element $g$ in $G$ is the closure of the
set of points $x$ in $M$ that are displaced by $g$. We will also write,
somewhat abusing terminology, that an element $g$ of $\tildeG$ is supported in
some subset $U$ if it can be represented by an isotopy $g_t$ such that 
$\supp g_t \subset U$ for all $t$.

We say that a \emph{flow} $\varphi$ in $G$, i.e. a group homomorphism $t \mapsto \varphi_t$ from
$\mathbb{R}$ to $G$, \emph{compresses} an open set $M'$ onto a subset $L \subset M$ if, for
every neighborhood $U$ of $L$ and every compact $K \subset M'$, there is some $T$
such that $\varphi_t(K) \subset U$ for all $t \geqslant T$.

The \emph{conjugate} of an element $h$ of $G$ or $\tildeG$ by another element
$g$ is $\conj{g}{h} = ghg^{-1}$. It is seen as ``$h$ transported by $g$''. In
particular, $\supp(\conj{g}{h}) = g(\supp h)$.
Our first \lcnamecref{lem:conj_homotopy} will be useful to study $\tildeG$.

\begin{lemma}[Homotopies for conjugates and commutators]
  \label{lem:conj_homotopy}
  Let $f$ and $g$ be two isotopies in $G$. The following are isotopies in $G$
  which are homotopic:
  \[
    \big(t \mapsto \conj{f_t}{g_t}\big) \sim \big(t \mapsto \conj{f_1}{g_t}\big).
  \]
  The same is true with:
  \[
    \big(t \mapsto [f_t, g_t]\big) \sim
    \big(t \mapsto [f_1, g_t]\big) \sim
    \big(t \mapsto [f_t, g_1]\big).
  \]
\end{lemma}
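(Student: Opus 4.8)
The plan is to prove the conjugation homotopy by writing down an explicit reparametrized family, and then to deduce the three commutator homotopies from it, using only the fact---recalled just above---that timewise composition and inversion descend to well-defined operations on homotopy classes of isotopies.

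First I would check that $t \mapsto \conj{f_t}{g_t} = f_t g_t f_t^{-1}$ and $t \mapsto \conj{f_1}{g_t}$ are indeed isotopies in $G$: they equal $\Id$ at $t = 0$ since $f_0 = g_0 = \Id$, each time-$t$ map lies in the subgroup $G$, and they are smooth in $(t, x)$ because $(t, x) \mapsto (t, f_t(x))$ is a diffeomorphism of $M \times [0,1]$, so that $(t, x) \mapsto f_t^{-1}(x)$ is smooth. Then I would fix the smooth reparametrization $r \co [0,1]^2 \to [0,1]$, $r(t,s) = (1-s)t + s$, which satisfies $r(t,0) = t$, $r(t,1) = 1$ and $r(1,s) = 1$, and set $\Phi(x,t,s) = \conj{f_{r(t,s)}}{g_t}(x)$. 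This is a smooth map $M \times [0,1]^2 \to M$; at $s = 0$ it is $t \mapsto \conj{f_t}{g_t}$, at $s = 1$ it is $t \mapsto \conj{f_1}{g_t}$, at $t = 1$ it is the $s$-independent map $\conj{f_1}{g_1}$ because $r(1,s) = 1$, and at $t = 0$ it is the identity of $M$ for every $s$ because $g_0 = \Id$ regardless of the value $r(0,s) = s$. These are precisely the four conditions in the definition of a homotopy with fixed end-points, which settles the first claim.

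For the commutators I would write $[f_t, g_t] = \conj{f_t}{g_t}\, g_t^{-1}$: multiplying the homotopy $\Phi$ on the right by the fixed isotopy $t \mapsto g_t^{-1}$ (which preserves the four boundary conditions, since the end-point stays $s$-independent and the start stays $\Id$) and using that timewise composition is well defined on homotopy classes yields $\bigl(t \mapsto [f_t, g_t]\bigr) \sim \bigl(t \mapsto [f_1, g_t]\bigr)$. Applying this with the roles of $f$ and $g$ exchanged gives $\bigl(t \mapsto [g_t, f_t]\bigr) \sim \bigl(t \mapsto [g_1, f_t]\bigr)$, and taking inverses in the group of homotopy classes, together with $[a,b]^{-1} = [b,a]$, converts this into $\bigl(t \mapsto [f_t, g_t]\bigr) \sim \bigl(t \mapsto [f_t, g_1]\bigr)$. (Alternatively, each commutator homotopy can be produced directly by the same reparametrization device, moving only the slot that has to run to time $1$: $(x,t,s) \mapsto [f_{r(t,s)}, g_t](x)$ and $(x,t,s) \mapsto [f_t, g_{r(t,s)}](x)$.)

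I do not expect a genuine obstacle here; the content is bookkeeping. The one point deserving care will be the verification of the four boundary conditions for $\Phi$ and its commutator analogues: it is the requirement $r(1, \cdot) \equiv 1$ that forces the time-$1$ slice to be constant in $s$, while it is the vanishing of $g$ (resp.\ $f$) at time $0$---not any property of $r$---that keeps the time-$0$ slice equal to the identity even though $r(0, \cdot)$ is not constant.
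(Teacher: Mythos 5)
Your proof is correct and follows essentially the same route as the paper: the explicit homotopy $(s,t)\mapsto \conj{f_{(1-s)t+s}}{g_t}$ is exactly the one used there, and the commutator statements are likewise deduced from the conjugation case via the identity $[f,g]=\conj{f}{g}g^{-1}$. The only cosmetic difference is that for $[f_t,g_t]\sim[f_t,g_1]$ the paper uses $[f,g]=f\,\conj{g}{f^{-1}}$ directly, whereas you swap the roles of $f$ and $g$ and take inverses; both are fine.
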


\begin{proof}
  The second part of the statement follows from the first one since
  $[f, g] = \conj{f}{g}g^{-1} = f\conj{g}{f^{-1}}$.

  In order to prove the first part, first notice that all paths indeed start at
  $\Id$. One possible homotopy between the two isotopies is:
  \[
    (s, t) \mapsto \conj{f_{s + (1-s)t}}{g_t}.
  \]
  All required properties can be checked directly. For instance, for all $s$,
  $(s, 0) \mapsto \conj{f_s}{\Id} = \Id$ while $(s, 1) \mapsto \conj{f_1}{g_1}$ which is
  indeed the common end-point of both isotopies.
\end{proof}

The key to uniform simplicity is a result relating displacement and compression
to conjugation.

\begin{proposition}
  \label{prop:huit_conj}
  Let $M'$ and $M''$ be open sets in $M$, $M'' \subset M'$, let $L$ be a compact
  subset in $M''$, and let $g$ be an element of $G$ or $\tildeG$ such that
  $g(L) \subset M'' \setminus L$ and $L \subset g(M'')$. If there exists flows $\varphi$ and $\theta$ in $G$ 
  compressing $M'$ and $M''$ respectively onto $L$, and
  such that $\theta$ has compact support in $M'$, then every element $f$ of $G$ or
  $\tildeG$ that is a product of commutators of elements with support in $M'$
  is a product of at most eight conjugates of $g^{\pm1}$.
\end{proposition}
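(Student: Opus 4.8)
The strategy is the compression--swindle one of \cite{BIP,Tsuboi_perfectness}: push the support of $f$ close to $L$ with a compressing flow, run an infinite swindle there to write $f$ as a bounded product of commutators of elements supported in $M'$, and convert each such commutator into conjugates of $g^{\pm1}$ using that $g$ displaces a neighbourhood of $L$. For the setup, the hypotheses $g(L)\subset M''\setminus L$ and $L\subset g(M'')$ let me fix a relatively compact open $U\supset L$ with $\overline U\subset M''$, $g(\overline U)\cap\overline U=\emptyset$, $g(\overline U)\subset M''$ and $\overline U\subset g(M'')$; since $\theta$ compresses $M''$ onto $L$, I fix $T$ with $\Theta:=\theta_T\in G$ satisfying $\Theta(\overline U\cup g(\overline U))\subset U$. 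Then $\overline U\supset\Theta(\overline U)\supset\Theta^2(\overline U)\supset\cdots$ shrinks onto $L$, the shells $U_n:=\Theta^n(U)\setminus\Theta^{n+1}(\overline U)$ are pairwise disjoint with $\Theta(U_n)=U_{n+1}$ and $\bigsqcup_{n\ge 0}U_n=U\setminus L$, while $\supp\Theta\subset\supp\theta\subset M'$. In the universal-cover case I fix once and for all isotopies representing $g$, $\Theta$ and the two flows, carry out every conjugation at the level of isotopies, and invoke \cref{lem:conj_homotopy} to trade a conjugation by an isotopy for the conjugation by its time-one map whenever it helps to keep supports under control.

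Write $f$ as a product of commutators of elements supported in a compact $K\subset M'$. Because $\varphi$ compresses $M'$ onto $L$, some $\varphi_S$ sends $K$ into an arbitrarily small neighbourhood of $L$; replacing $f$ by $\conj{\varphi_S}{f}$ costs nothing (a decomposition of $\conj{\varphi_S}{f}$ into $N$ conjugates of $g^{\pm1}$ conjugates back to one of $f$ of the same length), and after replacing $\Theta$ by a suitable power I may assume $f$ is a product of commutators of elements supported in the single shell $U_0$. The translates $\conj{\Theta^n}{f}$ are then supported in the pairwise disjoint shrinking shells $U_n$, so the infinite product $F:=\prod_{n\ge 0}\conj{\Theta^n}{f}$ is formally defined, is supported in $\overline U$, and satisfies $F=f\cdot\conj{\Theta}{F}$, that is $f=[F,\Theta]$. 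The genuine difficulty here — and the reason the hypothesis asks for $f$ to be a product of commutators of elements supported in $M'$, rather than merely supported in $M'$ — is that this infinite product must define an honest smooth compactly supported diffeomorphism (resp.\ isotopy class): naively rescaling a diffeomorphism into ever smaller shells makes its higher derivatives blow up near $L$, and one repairs this, following Tsuboi, by building the swindle out of the commutators of $f$ so that the infinite products that occur are commutators of tame factors. Carried out with care, this realises $f$, in $G$ or in $\tildeG$, as a product of at most two commutators $[a_i,b_i]$ of elements $a_i,b_i$ supported in $M'$.

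To finish, conjugate each $[a_i,b_i]$ by a suitable $\varphi_{S_i}$ (again using that $\varphi$ compresses $M'$ onto $L$) so that $\varphi_{S_i}(\supp a_i\cup\supp b_i)$ lies in a neighbourhood $U'$ of $L$ small enough that $g(\overline{U'})\cap\overline{U'}=\emptyset$. For $a,b$ supported in such a $g$-displaceable set, the elementary identity
\[
  [a,b]=\bigl[[a,g],b\bigr]=\conj{a}{g}\cdot g^{-1}\cdot\conj{b}{g}\cdot\conj{ba}{g^{-1}}
\]
holds, by expanding and using $g(\overline{U'})\cap\overline{U'}=\emptyset$ to see that $b$ fixes $g(U')$; the remaining hypothesis $\overline{U'}\subset g(M'')$ ensures $g^{-1}(\overline{U'})$ lies in the region compressed by $\theta$, which is what lets one run the swindle symmetrically in $g$ and $g^{-1}$. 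Thus each conjugated commutator is a product of at most four conjugates of $g^{\pm1}$, hence so is each $[a_i,b_i]$ after conjugating back, and therefore $f$ is a product of at most $2\times 4=8$ conjugates of $g^{\pm1}$; in the universal cover the conjugations are again performed on isotopies with \cref{lem:conj_homotopy} keeping the supports in hand.

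The main obstacle is the second move: giving a rigorous meaning to the infinite swindle $\prod_n\conj{\Theta^n}{f}$ as an element of $G$ (or of $\tildeG$) while keeping the number of commutators bounded. This is precisely where the commutator structure of $f$ must be exploited to defeat the blow-up of derivatives near $L$, and it is the reason the hypothesis is phrased in terms of products of commutators of elements supported in $M'$, and why a constant such as eight — chosen for convenience rather than optimised — appears.
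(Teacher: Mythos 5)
Your outer skeleton (compress with $\varphi$, produce disjoint copies with $\theta$, reduce to two commutators, then trade each commutator for four conjugates of $g^{\pm1}$ via the displacement identity) matches the paper, and your final trading identity is essentially \cref{lem:trading}. But the middle step, which is the heart of the proposition, has a genuine gap: you reduce to a bounded number of commutators by the infinite swindle $F=\prod_{n\ge 0}\conj{\Theta^n}{f}$, and you yourself flag that making $F$ a smooth compactly supported diffeomorphism is ``the genuine difficulty'' --- but you never resolve it, and the proposed repair (``following Tsuboi \dots commutators of tame factors'') is not an argument. In fact it is also not what Tsuboi, BIP, or this paper do: the actual mechanism is \emph{finite}. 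Since $f$ is by hypothesis a product of finitely many, say $N$, commutators, \cref{lem:crunching} places the $N$ partial products into $N+1$ pairwise disjoint copies $\varphi^i(U)$ and yields the identity $f=[A,B][\varphi_1,F^{-1}]$ with $F$ a \emph{finite} product; no infinite composition, hence no blow-up of derivatives near $L$, ever occurs. Your swindle, besides being unjustified in the smooth category, would anyway produce the single commutator $f=[F,\Theta]$, not the two commutators your count of $2\times 4=8$ relies on.

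Two further points in your setup do not go through. First, your reduction ``after replacing $\Theta$ by a suitable power I may assume $f$ is a product of commutators of elements supported in the single shell $U_0=U\setminus\Theta(\overline U)$'' is impossible in general: the supports of the commutator factors may meet $L$, and conjugating by $\varphi_S$ compresses them \emph{onto} $L$, i.e.\ into every $\Theta^n(U)$, never into an annular region avoiding a neighborhood of $L$; so the translates $\conj{\Theta^n}{f}$ do not have pairwise disjoint supports. The paper avoids this by never asking for supports disjoint from $L$: it applies \cref{lem:disjoint_iterates} to the \emph{conjugated} flow $\bar\theta=\conj{g}{\theta}$, which compresses $g(M'')$ onto $g(L)$, with $W=g(U)$ disjoint from $U$, so that $\bar\theta_T(U\cup W)\subset W$ and the iterates $\bar\theta_T^{\,i}(U)$ are automatically disjoint. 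Second, this is exactly where the hypothesis $L\subset g(M'')$ (hence $U\subset g(M'')$) is used --- the crunching happens inside $g(M'')$ and one conjugates back by $g^{-1}$ and recompresses with $\varphi$ before trading --- whereas in your write-up the role of this hypothesis (``run the swindle symmetrically in $g$ and $g^{-1}$'') does not correspond to any concrete step. To repair your proof, replace the infinite swindle by the finite crunching argument with the conjugated flow, and the rest of your outline (including the universal-cover bookkeeping via \cref{lem:conj_homotopy}) falls into place.
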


Everything else in this section consists of internal details of the proof.
The first magic trick turns commutators into conjugates of a displacing
isotopy. It is an easy adaptation to $\tildeG$ of
\cite[Remark~6.6]{Tsuboi_perfectness}. Very close considerations also appear in
\cite[Section~2]{BIP}.

\begin{lemma}[Commutators trading]
  \label{lem:trading}
  Let $a$, $b$ and $g$ be three isotopies in $G$. If $\supp(a_t) \cap g_1(\supp(b_t))$
  is empty then $[a, b]$ is homotopic, with fixed end-points, to a product of
  two conjugates of $g$ and two conjugates of $g^{-1}$.
  In particular $[a_1, b_1]$ is a product of two conjugates of $g_1$ and two
  conjugates of $g_1^{-1}$.
\end{lemma}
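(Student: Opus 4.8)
The plan is to exploit the standard "commutator swindle" that writes $[a,b]$ in terms of a conjugating element whenever the two supports in play are disjoint. The crucial input is the disjointness hypothesis $\supp(a_t)\cap g_1(\supp(b_t))=\emptyset$, which says that $a_t$ and $\conj{g_1}{b_t}=g_1b_tg_1^{-1}$ have disjoint supports, hence commute for every $t$. First I would record the algebraic identity
\[
  [a,b] \;=\; \big(a\,\conj{g}{b^{-1}}\,a^{-1}\big)\cdot\big(\conj{g}{b}\big),
\]
valid in any group: expanding the right-hand side gives $a\,g b^{-1}g^{-1}a^{-1}\,g b g^{-1}$, and since $a$ commutes with $gbg^{-1}$ (and hence with $gb^{-1}g^{-1}$) this equals $gb^{-1}g^{-1}\,a\,a^{-1}\,gbg^{-1}=\Id$... so instead I would use the correct form
\[
  [a,b] \;=\; a\,\big(\conj{g}{b^{-1}}\big)\,a^{-1}\cdot\conj{g}{b}
  \;=\; \conj{a}{\conj{g}{b^{-1}}}\cdot\conj{g}{b},
\]
which by the commutation is $\conj{g}{b^{-1}}\cdot a a^{-1}\cdot\conj{g}{b}=\Id$ only if things commuted too much; the honest computation is $[a,b]=aba^{-1}b^{-1}$ and, using that $b$ and $\conj{a^{-1}}{g}$... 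Let me state the clean version that actually works: since $a$ commutes with $\conj{g}{b}$ we have $a=\conj{g}{b}\,a\,\conj{g}{b}^{-1}$, and therefore
\[
  [a,b] = a b a^{-1} b^{-1}
        = \big(\conj{g}{b}\,a\,\conj{g}{b}^{-1}\big)\,b\,a^{-1}b^{-1}
        = \conj{g}{b}\cdot\big(a b a^{-1} b^{-1}\big)\cdot\text{(nothing new)},
\]
so the productive route is rather: set $c = ab a^{-1}$; then $c$ has support $a(\supp b)$, which is disjoint from $g_1(\supp b)$ after possibly shrinking, and $[a,b]=c\,b^{-1}$. Writing $b^{-1}=\conj{g}{b^{-1}}\cdot\big(\conj{g}{b}\,b^{-1}\big)=\conj{g}{b^{-1}}\cdot[g,b^{-1}]^{-1}$ reduces matters to commutators of the form $[g,b^{\pm1}]$, and $[g,b^{-1}]=\conj{g}{b^{-1}}\,b$; since $\conj{g}{b^{-1}}$ and $b$ need not commute this still is not a product of conjugates of $g^{\pm1}$ by itself.

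The honest and standard argument, which I would carry out, is as follows. Because $\supp(a_t)$ is disjoint from $g_1(\supp(b_t))=\supp(\conj{g_1}{b_t})$, the isotopies $t\mapsto a_t$ and $t\mapsto \conj{g_1}{b_t}$ have disjoint supports, hence $a_t\,\conj{g_1}{b_t}=\conj{g_1}{b_t}\,a_t$ for all $t$. Using \cref{lem:conj_homotopy}, the isotopy $t\mapsto\conj{g_1}{b_t}$ is homotopic with fixed endpoints to $t\mapsto\conj{g_t}{b_t}$, so up to homotopy we may work with $\conj{g}{b}$ and $\conj{g}{b^{-1}}$, both of which are genuinely conjugates of the single isotopy $g$ (resp. $g^{-1}$) by the isotopy $b$ (resp. $b$). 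Now the key identity is
\[
  [a,b] \;\sim\; \conj{a\,g}{b}\cdot\conj{g}{b^{-1}}
             \;=\; \big(\conj{a}{\conj{g}{b}}\big)\cdot\conj{g}{b^{-1}},
\]
which one verifies by expanding: $\conj{ag}{b}\cdot\conj{g}{b^{-1}} = ag\,b\,g^{-1}a^{-1}\cdot g\,b^{-1}g^{-1}$, and inserting $a^{-1}a=\Id$ and using $[a,\conj{g}{b}]=\Id$ to slide $a$ past $\conj{g}{b}$, this telescopes to $aba^{-1}b^{-1}=[a,b]$ once one also uses $[a,\conj g{b^{-1}}]=\Id$. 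This exhibits $[a,b]$ (up to fixed-endpoint homotopy) as $\conj{ag}{b}\cdot\conj{g}{b^{-1}}$, i.e. as a product of two conjugates of $g$ and two conjugates of $g^{-1}$ after further expanding each $\conj{ag}{b}$-type term — more precisely $\conj{ag}{b}=a(\conj{g}{b})a^{-1}$ is one conjugate of the isotopy $\conj{g}{b}$, and to honor the statement literally I would write $[a,b]\sim(\conj{ag}{b})(\conj{ag}{b^{-1}})^{-1}(\conj{ag}{b^{-1}})(\conj{g}{b^{-1}})$ — no, the cleanest is to absorb $a$ directly and note $\conj{ag}{b^{\pm1}}$ is a conjugate of $g^{\pm1}$ by the isotopy $t\mapsto a_tg_t$, giving exactly two conjugates of $g$ and, symmetrically from $\conj{g}{b^{-1}}=\conj{g}{b}^{-1}$ handled as a conjugate of $g^{-1}$, the claimed count.

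The main obstacle, and the only subtle point, is bookkeeping at the level of \emph{isotopies} rather than group elements: one must make sure every rewriting is realized by a fixed-endpoint homotopy of isotopies, which is exactly what \cref{lem:conj_homotopy} is for (to replace $\conj{g_t}{b_t}$ by $\conj{g_1}{b_t}$, so that the disjoint-support hypothesis, which is stated with $g_1$, gives honest commutation for all $t$), and one must check that the conjugating paths $t\mapsto a_tg_t$ etc. are themselves isotopies in $G$, i.e. start at $\Id$, which they do since $a_0=g_0=\Id$. The final sentence "in particular $[a_1,b_1]$ is a product of two conjugates of $g_1$ and two of $g_1^{-1}$" is then immediate by evaluating the homotopy at time $1$ and applying $\pi\co\tildeG\to G$, or by running the same algebraic identity directly in $G$ with the disjoint-support hypothesis giving $[a_1,\conj{g_1}{b_1}]=\Id$.
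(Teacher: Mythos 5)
There is a genuine gap here, and it sits exactly at your ``key identity''. You claim $[a,b]\sim \conj{ag}{b}\cdot\conj{g}{b^{-1}}$, but this is false: expanding gives $a\,\conj{g_1}{b_t}\,a^{-1}\cdot\conj{g_1}{b_t^{-1}}$, and the disjoint-support hypothesis says precisely that $a_t$ commutes with $\conj{g_1}{b_t}$, so under that very hypothesis the product collapses to $\conj{g_1}{b_t}\,\conj{g_1}{b_t^{-1}}=\Id$, not to $[a_t,b_t]$ (and without the commutation it is not $[a,b]$ either); your own first abandoned computation nearly noticed this. There is a second, equally serious problem: even if an identity of this shape held, the factors $\conj{ag}{b^{\pm1}}$ and $\conj{g}{b^{\pm1}}$ are conjugates of $b^{\pm1}$ (in the paper's convention $\conj{x}{y}=xyx^{-1}$ is ``$y$ transported by $x$''), not conjugates of $g^{\pm1}$. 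The entire point of the lemma, in view of \cref{prop:huit_conj}, is to express a commutator as a product of conjugates of the \emph{displacing} element $g$, whose conjugates are being counted; producing conjugates of $b$ proves nothing of the sort. Your correct preliminary observations (that $a_t$ commutes with $\conj{g_1}{b_t}$ for all $t$, and that all rewritings must be implemented by fixed-endpoint homotopies, with $g_1$ eventually replaced by $g_t$) are the right starting point, but the algebra you build on them does not work and yields factors of the wrong type.

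For comparison, the paper's proof sets $\alpha_t=\conj{g_1^{-1}}{a_t}$, so that $\supp\alpha_t=g_1^{-1}(\supp a_t)$ is disjoint from $\supp b_t$ and hence $\alpha_t b_t=b_t\alpha_t$; it then substitutes $a_t=g_1\alpha_t g_1^{-1}$ into $[a_t,b_t]$, inserts $\alpha_t^{-1}\alpha_t$ and $b_t^{-1}b_t$, and uses the commutation once to obtain $[a_t,b_t]=g_1\,\conj{\alpha_t}{g_1^{-1}}\,\conj{b_t\alpha_t}{g_1}\,\conj{b_t}{g_1^{-1}}$, which is visibly a product of two conjugates of $g_1$ (one of them by $\Id$) and two conjugates of $g_1^{-1}$. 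Finally the explicit homotopy replacing $g_1$ by $g_{1+s(t-1)}$ turns this time-$1$ identity into a fixed-endpoint homotopy of isotopies, giving the statement in $\tildeG$; this is the role you were hoping \cref{lem:conj_homotopy} would play. If you want to salvage your write-up, redo the middle of the argument along these lines: conjugate $a$ (not $b$) by $g_1^{-1}$ so that the commutation can be applied, and keep track that each resulting factor is a conjugate of $g_1^{\pm1}$, four factors in total rather than two.
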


\begin{proof}
  We set $\alpha_t = \conj{g_1^{-1}}{a_t}$. By assumption and transport of support,
  $b$ and $c$ have disjoint support hence $\alpha_t b_t = b_t \alpha_t$ and:
  \begin{align*}
    a_tb_ta_t^{-1}b_t^{-1} &= (g_1\alpha_tg_1^{-1}) b_t (g_1\alpha_t^{-1} g_1^{-1}) b_t^{-1} \\
                           &= g_1\alpha_tg_1^{-1} (\alpha_t^{-1} \underbrace{\alpha_t) b_t}_{= b_t \alpha_t}g_1\alpha_t^{-1} (b_t^{-1} b_t) g_1^{-1} b_t^{-1} \\
                           &= g_1\conj{\alpha_t}{g_1^{-1}} \conj{b_t\alpha_t}{g_1} \conj{b_t}{g_1^{-1}}.
  \end{align*}
  Hence $[a_t, b_t]$ is homotopic, through
  \[
    (s, t) \mapsto g_{1+s(t-1)}\conj{\alpha_t}{g_{1+s(t-1)}^{-1}} \conj{b_t\alpha_t}{g_{1+s(t-1)}}
    \conj{b_t}{g_{1+s(t-1)}^{-1}},
  \]
  to the isotopy
  \[
    t \mapsto g_t\conj{\alpha_t}{g_t^{-1}} \conj{b_t\alpha_t}{g_t} \conj{b_t}{g_t^{-1}}.
  \]
  and we get the announced decomposition in $\tildeG$.
\end{proof}

The next step is to explain, still following \cite{BIP, Tsuboi_perfectness}, a
sufficient condition allowing to turn a product of any number of commutators
into a product of two commutators.

\begin{lemma}[Commutator crunching]
  \label{lem:crunching}
  Let $U$ be a subset of $M$. Assume there exists an element $\varphi \in G$
  such that the subsets $\varphi^i(U)$ for $i \geqslant 0$ are pairwise disjoint.
  Then any product of commutators of elements of $G$ or $\tildeG$
  with support in $U$ is a product of two commutators.
\end{lemma}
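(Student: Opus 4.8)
The plan is to use the classical infinite-swindle (Eilenberg–Mazur) argument, made precise via the hypothesis that the translates $\varphi^i(U)$ are pairwise disjoint. Let me write $f = \prod_{k=1}^{n} [a_k, b_k]$ where each $a_k, b_k$ is supported in $U$ (in the $\tildeG$ case, represented by isotopies supported in $U$ for all time). The key idea is to ``spread out'' the $n$ commutators onto the disjoint translates $\varphi(U), \varphi^2(U), \dots, \varphi^n(U)$, and then collect them using a single auxiliary element that acts like the shift $\varphi$ on the relevant piece.

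First I would introduce, for each $k$, the conjugated elements $a_k^{(k)} := \conj{\varphi^k}{a_k}$ and $b_k^{(k)} := \conj{\varphi^k}{b_k}$, which are supported in $\varphi^k(U)$. Since the supports $\varphi^k(U)$ are pairwise disjoint, all the $a_k^{(k)}, b_k^{(k)}$ for distinct $k$ commute with one another, so setting $A := \prod_{k=1}^n a_k^{(k)}$ and $B := \prod_{k=1}^n b_k^{(k)}$ one gets $[A, B] = \prod_{k=1}^n [a_k^{(k)}, b_k^{(k)}] = \conj{\varphi}{\,\cdot\,}$-conjugate bookkeeping aside, this is conjugate to $f$ in a controlled way. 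More precisely, I would instead build a single element that collapses the shifted copies back down: the standard trick is to define $\Phi$ to be an element of $G$ that agrees with $\varphi$ on $\bigcup_{i \geqslant 1}\varphi^i(U)$ (or rather, one uses the flow/element $\varphi$ directly together with the fact that $\varphi$ maps $\varphi^i(U)$ to $\varphi^{i+1}(U)$), and then expresses $f$ as a commutator of $A$ (a suitable product of conjugates of the $a_k$'s placed on disjoint translates) with something involving $\Phi$. Concretely, the ``swindle'' identity $\prod_k [x_k, y_k] = [\,\prod_k \Phi^k x_k \Phi^{-k},\ \Phi \cdot(\text{infinite product})\cdot\Phi^{-1}\,]$-style manipulation reduces, because only finitely many factors are nontrivial here, to an honest product of two commutators in $G$; in $\tildeG$ one uses \cref{lem:conj_homotopy} repeatedly to check that the homotopy classes match, which is exactly why that lemma was isolated.

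Carrying this out in order: (1) conjugate the $k$-th commutator by $\varphi^k$ to move it into $\varphi^k(U)$, noting supports stay disjoint; (2) use disjointness-implies-commuting to regroup $\prod [a_k^{(k)}, b_k^{(k)}]$ as a single commutator $[A', B']$ of two products supported in $\bigsqcup_{k=1}^n \varphi^k(U)$; (3) relate $[A', B']$ back to the original $f$ — this is where one conjugates by an element realizing the shift and absorbs the shift into one more application of the commutator identity, producing the second commutator; (4) in the $\tildeG$ case, replace every ``equals'' by ``is homotopic with fixed endpoints'' and discharge each step with \cref{lem:conj_homotopy} (the isotopies all being supported in $U$ or its translates throughout). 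The main obstacle I anticipate is step (3): arranging the algebra so that the shift $\varphi$ is genuinely absorbed, i.e. that the discrepancy between $f$ and $[A', B']$ is itself a single commutator rather than merely a product of two — the Eilenberg swindle usually needs the full infinite tower, and one must check that truncating at level $n$ (which disjointness of $\varphi^i(U)$ for all $i \geqslant 0$ precisely permits, since $\varphi$ then has infinite order on $U$) still leaves enough room. The bookkeeping of conjugators, and making sure no new support leaks outside $\bigcup_{i \geqslant 0}\varphi^i(U)$, is the delicate part; everything else is formal.
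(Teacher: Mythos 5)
There is a genuine gap, and you have in fact put your finger on it yourself: your step (3) does not go through as described, and the idea needed to fix it is exactly the ingredient missing from your outline. If you conjugate the \emph{individual} commutators, setting $a_k^{(k)}=\conj{\varphi^k}{a_k}$, $b_k^{(k)}=\conj{\varphi^k}{b_k}$ and $[A',B']=\prod_k\conj{\varphi^k}{[a_k,b_k]}$, then the discrepancy between $f=\prod_k[a_k,b_k]$ and $[A',B']$ is a product of conjugates of the same $N$ commutators by \emph{different} powers of $\varphi$, and there is no identity that turns this discrepancy into a single commutator; vaguely ``absorbing the shift'' cannot work, because conjugating $[A',B']$ by $\varphi^{\pm 1}$ just shifts all the blocks simultaneously and never reproduces $f$, whose factors all sit in the single set $U$. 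The paper's proof avoids this by spreading out not the commutators but the \emph{partial products} $f_i=\prod_{j\leqslant i}[a_j,b_j]$, placed with \emph{decreasing} powers of the shift: $F=\prod_{i=1}^N\conj{\varphi_1^{N-i}}{f_i}$. Then conjugation by $\varphi_1$ moves each block one step over, so the single commutator $[F^{-1},\varphi_1]=F^{-1}\conj{\varphi_1}{F}$ telescopes, using $f_{k-1}^{-1}f_k=[a_k,b_k]$, into $f^{-1}$ times a product of commutators with pairwise disjoint supports; that last product merges (by your step (2), which is correct) into one commutator $[A,B]$, giving $f=[A,B][\varphi_1,F^{-1}]$. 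This telescoping with partial products is the whole point of the lemma, and it is precisely the step your ``obstacle at (3)'' flags as unresolved.

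Your other ingredients are fine: disjointness of supports does let you regroup a product of disjointly supported commutators into a single commutator, only finitely many translates $\varphi(U),\dots,\varphi^N(U)$ are needed (no honest infinite swindle is available in a group of compactly supported diffeomorphisms, nor is it needed), and in the $\tildeG$ case one does discharge the conjugations with \cref{lem:conj_homotopy}, choosing a path $\varphi_t$ from $\Id$ to $\varphi$ in the connected group $G$. But without the partial-product/telescoping device the decomposition into \emph{two} commutators is not established.
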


\begin{proof}
  We prove it for $\tildeG$, the case of $G$ is the same except that the
  invocation of \cref{lem:conj_homotopy} at the end is not needed.
  Let $\varphi_t$ be path between $\Id$ and $\varphi$ in $G$.
  Let $f$ be an isotopy in $G$. Assume there is some integer $N$ and isotopies
  $a_i$, $b_i$, in $G$, supported in $U$, $1 \leqslant i \leqslant N$, such that, for all $t$,
  \[
    f_t = \prod_{i = 1}^N [a_{i, t}, b_{i, t}].
  \]
  The goal is to prove that $f$ is a product of two commutators. We set
  \[
    f_{i, t} := \prod_{j = 1}^i [a_{j, t}, b_{j, t}]
  \]
  so $f_0 = \Id$, $f_1 = [a_1, b_1]$, \dots, $f_N = f$.
  Consider the isotopy
  \[
    F_t = \prod_{i = 1}^N \conj{\varphi_1^{N-i}}{f_{i, t}}
  \]
  where each $f_i$ is transported inside $\varphi_1^{N-i}(U)$, as shown in the first
  line of \cref{fig:crunch_commutators}. In this picture and the following
  computations, we drop the subscript $t$ for clarity.
  \begin{figure}[ht]
    \centering
    \newcommand{\nd}[1]{\node[draw, circle, minimum size=1.5cm]{#1};}
    \begin{tikzpicture}
      \matrix [matrix of math nodes,row sep=.5cm, column sep=.5cm]
      {
        & U & \varphi_1(U) & \cdots & \varphi_1^{N - 1}(U) & \varphi_1^N(U) \\
        F & \nd{f = f_N} & \nd{f_{N-1}} & \cdots & \nd{f_1} & \nd{f_0 = \Id} \\
        \conj{\varphi_1}{F} & \nd{\Id} & \nd{f_N} & \cdots & \nd{f_2} & \nd{f_1} \\
        F^{-1}\conj{\varphi_1}{F} & \nd{f^{-1}} & \nd{[a_N, b_N]} & \cdots & \nd{[a_2, b_2]} & \nd{[a_1, b_1]} \\
      };
    \end{tikzpicture}
    \caption{Proof of \cref{lem:crunching}}
    \label{fig:crunch_commutators}
  \end{figure}
  Next consider $\conj{\varphi_1}{F}$ where each piece is shifted by one iterate of
  $\varphi_1$ (second line in \cref{fig:crunch_commutators}). Hence, in $[F^{-1}, \varphi_1] =
  F^{-1}\conj{\varphi_1}{F}$ we get $f^{-1}$ in $U$ and exactly one commutator $[a_i, b_i]$
  in each copy of $U$ since $f_{k-1}^{-1} f_k = [a_k, b_k]$ (third line in
  \cref{fig:crunch_commutators}). In formula:
  \begin{align*}
    [F^{-1}, \varphi_1] &= f^{-1}\prod_{j = 0}^{N-1} \conj{\varphi_1^{N-j}}{[a_{j+1}, b_{j+1}]}\\
                    &= f^{-1}\prod_{j = 0}^{N-1} [\conj{\varphi_1^{N-j}}{a_{j+1}},
    \conj{\varphi_1^{N-j}}{b_{j+1}}].
  \end{align*}
  In the above product, each term has support in its own copy of $U$ hence we
  can rewrite by commutation:
  \[
    [F^{-1}, \varphi_1] = f^{-1}
    \left[\underbrace{\prod_{j = 0}^{N-1}\conj{\varphi_1^{N-j}}{a_{j+1}}}_{=:A},
    \underbrace{\prod_{j = 0}^{N-1}\conj{\varphi_1^{N-j}}{b_{j+1}}}_{=:B}\right]
  \]
  and, reintroducing $t$, we get the final formula $f_t = [A_t, B_t][\varphi_1, F_t^{-1}]$.
  The homotopies from the second part of \cref{lem:conj_homotopy} finish the proof.
\end{proof}

In the above result, the hypothesis of disjointness of the $\varphi^i(U)$ is
easier to check than it may seem. The following lemma ensures it.

\begin{lemma}[\cite{BIP}]
  \label{lem:disjoint_iterates}
  Let $\varphi$ be a transformation of a set $X$. Let $U$ and $W$ be
  disjoint subsets of $X$. If $\varphi(U \cup W ) \subset W$ then all iterates
  $\varphi^i(U)$, $i \geqslant 0$ are pairwise disjoint. 
\end{lemma}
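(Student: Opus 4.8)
The plan is a short, purely set-theoretic induction; the only real input beyond the forward-invariance hypothesis is that a \emph{transformation} of $X$ is a bijection. First I would observe that $\varphi(U \cup W) \subseteq W$, combined with the trivial inclusion $W \subseteq U \cup W$, gives simultaneously $\varphi(U) \subseteq W$ and $\varphi(W) \subseteq W$. The second inclusion says $W$ is forward invariant, so $\varphi^{k-1}(W) \subseteq W$ for all $k \geqslant 1$, and then the first inclusion yields $\varphi^k(U) = \varphi^{k-1}(\varphi(U)) \subseteq \varphi^{k-1}(W) \subseteq W$ for every $k \geqslant 1$. Since $U$ and $W$ are disjoint, this already settles the special case $U \cap \varphi^k(U) = \emptyset$ for all $k \geqslant 1$.

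Next I would reduce the general statement to this special case. Given indices $0 \leqslant i < j$, suppose a point $x$ lay in $\varphi^i(U) \cap \varphi^j(U)$. Applying the inverse bijection $\varphi^{-i}$, the point $\varphi^{-i}(x)$ would then lie in $U \cap \varphi^{\,j-i}(U)$, which is empty by the previous paragraph because $j - i \geqslant 1$. Hence $\varphi^i(U) \cap \varphi^j(U) = \emptyset$, as desired.

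There is essentially no obstacle to this argument; the only point worth flagging is that invertibility of $\varphi$ is genuinely used in the reduction step, and is indispensable — a non-injective $\varphi$ that eventually collapses $U \cup W$ onto a single fixed point lying in $W$ satisfies the hypotheses but not the conclusion. In the uses we make of this lemma (notably in \cref{lem:crunching}) the map $\varphi$ lies in a diffeomorphism group, so bijectivity is automatic and costs nothing.
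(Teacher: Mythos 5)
Your proof is correct, and it differs slightly from the one in the paper in how the general pair of indices is handled. The paper's argument is intrinsic: it shows $\varphi^i(U) \subset \varphi^{i-1}(W) \setminus \varphi^i(W)$ and uses the nesting $\varphi^{i+p}(W) \subset \varphi^i(W)$ to place the successive iterates of $U$ in pairwise disjoint ``annuli'', whereas you first prove the base case $U \cap \varphi^k(U) = \emptyset$ for $k \geqslant 1$ and then conjugate by $\varphi^{-i}$ to reduce any pair $i < j$ to it. Both arguments rest on exactly the same two inputs: forward invariance ($\varphi^k(U) \subset W$ for $k \geqslant 1$, so iterates avoid $U$) and injectivity of $\varphi$ --- the paper uses injectivity tacitly in the step ``$U$ and $W$ are disjoint hence $\varphi^i(U)$ and $\varphi^i(W)$ are disjoint'', while you use it via the inverse map in the reduction. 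Your explicit remark that injectivity is indispensable (with the collapsing counterexample) is a worthwhile clarification of what ``transformation'' must mean here; note that plain injectivity already suffices for your reduction (from $\varphi^i(u) = \varphi^j(u')$ deduce $u = \varphi^{j-i}(u') \in U \cap \varphi^{j-i}(U)$), so full bijectivity is not needed, though it is of course automatic in the application to \cref{lem:crunching} and \cref{prop:huit_conj}, where $\varphi$ is a diffeomorphism.
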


\begin{proof}
  From $\varphi(U \cup W ) \subset W$, we learn that $\varphi^k(U) \subset W$ and 
  $\varphi^{k+l}(W) \subset \varphi^k(W)$, for all $k \geqslant 1$ and $l \geqslant 0$.
  First note that $U \cap W = \emptyset$ whereas all $\varphi^i(U)$, $i > 0$ are
  contained in $W$, hence we can assume $i \geqslant 1$ when proving the lemma.
  Next note that $\varphi^i(U) \subset \varphi^{i-1}(W) \setminus \varphi^i(W)$, for every $i \geqslant 1$.
  Indeed $\varphi^i(U) = \varphi^{i-1}(\varphi(U)) \subset \varphi^{i-1}(W)$,
  but $U$ and $W$ are disjoint hence $\varphi^i(U)$ and $\varphi^i(W)$ are
  disjoint. Since $\varphi^{i+p}(W) \subset \varphi^i(W)$, this can be improved to
  $\varphi^i(U) \subset \varphi^{i-1}(W) \setminus \varphi^{i+p}(W)$ for all $p \geqslant 0$. In particular
  $\varphi^i(U) \subset \varphi^{i-1}(W) \setminus \varphi^{i+p+1}(U)$ for all $p \geqslant 0$, so 
  $\varphi^i(U) \cap \varphi^j(U) = \emptyset$ for all $j > i$.
\end{proof}

We are now ready to prove the main result of this section.
\begin{proof}[Proof of \cref{prop:huit_conj}]
Let $U \subset M''$ be a compact neighborhood of $L$ small enough to ensure 
$W := g(U) \subset M'' \setminus U$ and $U \subset g(M'')$. The conjugated flow
$\bar \theta = \conj{g}{\theta}$ compresses $g(M'')$ into $g(L)$. We fix $T$ such
that $\bar \theta_T(U \cup W) \subset W$. By \cref{lem:disjoint_iterates}, all
iterates $\bar \theta_T^i(U)$, $i \geqslant 0$, are pairwise disjoint in $g(M'')$.

Let $f$ be any element of $G$ or $\tildeG$ which is a product of
commutators of elements with compact support in $M'$.
Up to conjugation by some $\varphi_t$, we can assume these elements have support in $U$,
hence in $g(M')$. \Cref{lem:crunching} then proves that $f$ is a product
of two commutators of elements with compact support in $g(M')$. After
conjugating by $g^{-1}$ and then by some $\varphi_t$, we get elements with
support in $U$. \Cref{lem:trading} then finishes the proof since $g$
displaces $U$.
\end{proof}

\section{Loose Legendrian submanifolds}
\label{sec:loose}

\subsection{Loose charts}

In this section, we recall the main definitions from \cite{Murphy_loose} that
we will need.
On $\mathbb{R}^3$, we consider the contact form $\alpha_3 = dz - pdq$, its Reeb vector field
$\partial_z$, and the front projection $(p, q, z) \mapsto (q, z)$. 
A Legendrian stabilization is a Legendrian arc $\gamma$ in $\mathbb{R}^3$ 
whose front projection has a single transverse self intersection, a single cusp
singularity, and a single Reeb chord. The action of a stabilization is the
action of its Reeb chord.

Let $n\geq 2$ and equip $\mathbb{R}^3 \times \mathbb{R}^{2n-2}$ with the contact form $\alpha = \alpha_3 - \sum y_i dx_i$.
For any contact manifold $(V, \xi)$ and any subset $U \subset V$,
a contact embedding $\varphi \co U \hookrightarrow \mathbb{R}^3 \times  \mathbb{R}^{2n-2}$ is a \emph{loose chart} for
a Legendrian submanifold $L \subset V$ if there is a convex
ball $B \subset \mathbb{R}^3$, and a stabilization $\gamma$ with action $a$ contained in $B$, such that
$(\varphi(U), \varphi(L)) = (B \times [-\rho, \rho]^{2n-2}, \gamma \times {0} \times [-\rho, \rho]^{n-1})$ and
$a/\rho^2 < 2$.

\begin{definition}\label{def:loose}
A Legendrian submanifold $L$ of dimension at least $2$ is called \emph{loose} if
for every connected component $\Lambda$ of $L$, $\Lambda$ admits a loose chart in
$V\setminus (L\setminus \Lambda)$.
\end{definition}

\begin{remark}\label{rem:stabilized}
We reserve the word loose for Legendrian submanifolds of dimension at least $2$.
However the above definition also makes sense for $n=1$, with no quantitative
condition (the constant $\rho$ disappears). We will use the word \emph{stabilized}
for the corresponding condition on $1$-dimensional Legendrian submanifolds.
\end{remark}

\begin{remark}\label{rem:stabilization}
Any connected Legendrian submanifold can be made loose by performing
a smooth (not Legendrian) isotopy supported in an arbitrary neighborhood
of one of its point.
\end{remark}

\subsection{Stability of Loose Legendrian embeddings}

Given a manifold $V$ with a cooriented contact structure $\xi$,
its \emph{symplectization} $SV$ is the submanifold of covectors
in $T^*V$ which define $\xi$, with its coorientation.
Given a Legendrian submanifold $\Lambda \subset V$, its preimage
under the projection $SV \to V$ is an exact Lagrangian
submanifold $S \Lambda \subset S V$. Given a manifold
$B$ with a $1$-form $\lambda$ such that $\d \lambda$ is symplectic,
its \emph{contactization} is the manifold $CB = B \times \mathbb R$
equipped with the contact form $\lambda + \d t$. An exact Lagrangian
submanifold $i:L\to B$ can be lifted to a Legendrian submanifold
$CL \subset CB$ as the graph of a primitive of $-i^* \lambda$ (if $L$ is
connected, such a Legendrian lift is unique up to a translation
in the $\mathbb R$ direction).
One may repeat these operations and consider for example
the Legendrian submanifold $CS\Lambda$ of $CSV$.

\begin{proposition}\label{prop:loose_stability}
  If $\Lambda$ is a loose (or stabilized if $\dim \Lambda =1$) Legendrian submanifold
  in a contact manifold $(V,\xi)$ then $CS \Lambda$ is loose in $CSV$.
\end{proposition}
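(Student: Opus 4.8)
The plan is to reduce the statement to a purely local computation in a loose chart, using the fact that looseness is detected in a neighborhood of a single point (for each connected component). First I would observe that $CSV$ carries a natural contact form: writing the symplectization as $SV = \{(q,p) \in T^*V : p \text{ defines } \xi \text{ with its coorientation}\}$, the tautological $1$-form $\lambda_{can}$ restricts to a $1$-form on $SV$ whose differential is symplectic, and $CSV = SV \times \R_t$ gets the contact form $\lambda_{can} + dt$. The Lagrangian $S\Lambda$ is exact (the restriction of $\lambda_{can}$ to it is exact — in fact zero, since $S\Lambda$ is the conormal-type lift), so $CS\Lambda$ is a genuine Legendrian lift. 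The key geometric input is that there is a natural contact embedding of a neighborhood of $V$ (viewed as a section, e.g. via a choice of contact form $\alpha$ for $\xi$, giving $V \hookrightarrow SV$, $q \mapsto \alpha_q$) times a neighborhood of $0$ in $\R^2_{(r,t)}$ into $CSV$: concretely $SV \cong V \times \R_r$ via the contact form, with $\lambda_{can} = e^r \alpha$, so $CSV \cong V \times \R_r \times \R_t$ with contact form $e^r\alpha + dt$, and $CS\Lambda = \Lambda \times \R_r \times \{0\}$ (after fixing the lift).

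Next I would localize. Pick a connected component $\Lambda_0$ of $\Lambda$ and a loose chart $\varphi \co U \hookrightarrow \R^3 \times \R^{2n-2}$ for $\Lambda_0$ in $V \setminus (\Lambda \setminus \Lambda_0)$, sending $(U, \Lambda_0 \cap U)$ to $(B \times [-\rho,\rho]^{2n-2}, \gamma \times \{0\} \times [-\rho,\rho]^{n-1})$ with $a/\rho^2 < 2$, where $\gamma$ is a stabilization arc with action $a$ in the convex ball $B \subset \R^3$. The corresponding component of $CS\Lambda$ is $\Lambda_0 \times \R_r \times \{0\}$ inside $U \times \R_r \times \R_t$ with contact form $e^r\alpha + dt$. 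The hard part — and the real content of the "folklore stability result" — will be to produce from $\varphi$ a loose chart for this component of $CS\Lambda$, i.e. to find a contact embedding of a neighborhood into $\R^3 \times \R^{2n}$ (note the ambient dimension jumps from $2n-1$ to $2n+1$) carrying the relevant piece to $B' \times [-\rho',\rho']^{2n}$ with a stabilization $\gamma$ of the same action and $a/(\rho')^2 < 2$. The point is that the two extra coordinates $(r,t)$ behave, up to a contactomorphism, like one of the pairs $(x_i,y_i)$ in the $\R^{2n-2}$ factor: one checks that on the model $\R^3 \times \R^{2n-2}$, replacing $\alpha = \alpha_3 - \sum y_i dx_i$ by $e^r\alpha + dt$ and then straightening, the new contact form is contactomorphic to $\alpha_3 - \sum_{i=1}^{n-1} y_i dx_i - y_n dx_n$ on $\R^3 \times \R^{2n}$, via an explicit change of coordinates using that $e^r\alpha + dt = e^r(\alpha_3 - \sum y_i dx_i + e^{-r}dt)$ and absorbing the conformal factor by rescaling the $p$-coordinate (which preserves $\alpha_3$ up to scale), and introducing $(x_n,y_n)$ from $(t,r)$ or a rotation thereof. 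Crucially this change of coordinates fixes the $\R^3$-factor pointwise near $B$, hence preserves $\gamma$ and its action $a$; and it is linear/affine in the transverse directions so the box $[-\rho,\rho]^{2n-2} \times [-\rho_0,\rho_0]^2$ maps into a box $[-\rho',\rho']^{2n}$ with $\rho'$ controlled — and by shrinking the original $\rho$ slightly (which only decreases $a/\rho^2$, keeping it $<2$) we get room to make $a/(\rho')^2 < 2$ as well.

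Finally I would assemble: the embedding just constructed sends $(U \cap$-neighborhood$) \times (\text{small } \R_r\text{-interval}) \times (\text{small }\R_t\text{-interval})$ contactomorphically into $\R^3 \times \R^{2n}$, carrying the component $\Lambda_0 \times (\text{interval}) \times \{0\}$ of $CS\Lambda$ onto $\gamma \times \{0\} \times [-\rho',\rho']^n$ (the Legendrian lift being the constant-$t$ graph, which lands in the right model Legendrian because the primitive of $-i^*\lambda_{can}$ on $S\Lambda$ is constant in the chart), with loose-chart data $(B, \gamma, a, \rho')$ satisfying $a/(\rho')^2 < 2$. One must also check this chart lies in $CSV \setminus (CS\Lambda \setminus (\Lambda_0 \times \R_r \times \{0\}))$: this is immediate since $\varphi$ was a chart in $V \setminus (\Lambda \setminus \Lambda_0)$ and the projection $CSV \to V$ sends $CS\Lambda \setminus (\Lambda_0$-part$)$ to $\Lambda \setminus \Lambda_0$. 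For the case $\dim\Lambda = 1$ (so $\dim CS\Lambda$ could be $1$ as well only if... no — $CS\Lambda$ has the same dimension as $\Lambda$, so a $1$-dimensional $\Lambda$ gives a $1$-dimensional $CS\Lambda$), the same construction applies with the stabilized condition and no quantitative constraint, so the conclusion is that $CS\Lambda$ is stabilized; but since the statement only claims "loose (or stabilized if $\dim\Lambda=1$)" as hypothesis and "loose" as conclusion, I should double-check the dimension bookkeeping — in fact $CS\Lambda$ sits in the higher-dimensional $CSV$ and one extra stabilization dimension is available, so a stabilized $1$-dimensional $\Lambda$ does produce a loose (now $1$-dimensional still, hence "stabilized") ... the cleanest reading is that the proposition's conclusion should be interpreted with the same dimensional caveat, and the proof is uniform. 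The main obstacle throughout is the explicit contactomorphism identifying the symplectization-contactization model with the standard jet-space model while preserving the stabilization arc and controlling the box sizes; everything else is formal.
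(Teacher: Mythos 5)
Your global set-up (trivializing $CSV\cong V\times\R_r\times\R_t$ with structure $\ker(e^r\alpha+dt)$, taking the lift $CS\Lambda=\Lambda\times\R_r\times\{t=0\}$, and trying to turn the two new directions into one extra pair $(x_n,y_n)$ so that the same $\gamma$, action $a$ and width $\rho$ give a loose chart) is indeed the strategy of the paper. But the step you dismiss as ``an explicit change of coordinates\dots everything else is formal'' is exactly where the content of the proposition lies, and the mechanism you sketch for it cannot work. First, ``absorbing the conformal factor by rescaling the $p$-coordinate'' fails because the factor $e^{r}$ depends on the coordinate $r$: rescaling $p$ (and $z$) by $e^{r}$ creates uncontrolled terms proportional to $dr$ (e.g.\ $z e^{r}dr$), and the paper explicitly warns, right after \cref{lem:liouville_rescaling}, that conformal rescalings in products are not innocent (Niederkr\"uger's large-neighborhood examples). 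Second, and more decisively, no contactomorphism that fixes the $\R^3$-factor pointwise can place $CS\Lambda$ in the loose-chart position. Indeed, write the source structure as $\ker(\alpha_3-\sum y_i'dx_i'+e^{-r}dt)$ and the target model as $\ker(\alpha_3-\mu)$ with $\mu=\sum y_i'dx_i'+y_n dx_n$. A map of the form $\Id_{\R^3}\times\psi$ forces the conformal factor to be $1$ (compare the $dz$-components), hence $\psi^*\mu=\sum y_i'dx_i'-e^{-r}dt$ exactly. At a point of the lifted Legendrian the left-hand side vanishes as an ambient covector if the image lies in $\{y'=0,\,y_n=0\}$, while the right-hand side does not (it is nonzero on $\partial_t$). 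So either the image Legendrian is not in the standard position $\{y=0\}$ (with your substitution $(x_n,y_n)=(t,-e^{-r})$ it ends up along the momentum direction), or you must correct by a shear such as $(x_n,y_n,z)\mapsto(-y_n,x_n,z-x_ny_n)$, which moves the $\R^3$-factor by an amount depending on the box coordinates, destroys the product form of the chart, and reinstates the quantitative problem you claimed to have controlled.

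This is precisely why the paper does not stay inside the $(r,t)$-plane: after the fibered change $s\mapsto-e^{t}s$ the structure becomes $\ker(\alpha-s\,dt-ds)$, the leftover exact term is killed by an isotopy flowing along the Reeb direction of a globally defined contact form $\alpha$ with complete Reeb flow (\cref{lem:reeb_rescaling}, made possible by \cref{lem:reeb_complete}), and the global form is then traded for the one that is standard in the chart by flowing along the Liouville direction $s\partial_s$ (\cref{lem:liouville_rescaling}); both isotopies are relative to $\{s=0\}$, so $CS\Lambda$ never moves and the symplectization direction becomes the unbounded position coordinate, which is what makes the same $\rho$ and $a$ work for free. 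Note that these maps do move the $V$-directions away from $\{s=0\}$ (consistent with the obstruction above) and the resulting chart is not contained over the original chart in $V$, so the statement is not purely local in the way you assume. Finally, a small but telling slip: $CS\Lambda$ has dimension $\dim\Lambda+1$, not $\dim\Lambda$, so your closing discussion of the $1$-dimensional case is moot; the conclusion there really is looseness, the required box in the new directions being available because the new position direction is unbounded -- again the very point your coordinate change does not deliver.
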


Related observations appear in \cite[Lemma~3.5]{Murphy_Siegel} and
\cite[Propositions~4.3 and~4.4]{Eliashberg_revisited}.
To prove \cref{prop:loose_stability}, we shall use the following three lemmas.

\begin{lemma}
  \label{lem:reeb_rescaling}
  Let $(V, \xi = \ker \alpha)$ be a contact manifold. Assume that the Reeb flow of
  $\alpha$ is complete.
  Let $\lambda$ be a Liouville form on a manifold $B$. For any function
  $f$ from $B$ to $\mathbb{R}$, there is an isotopy $\varphi$ of $V \times B$ which moves in the
  Reeb flow direction, is relative to $V \times \{ f = 0 \}$, and such that
  $\varphi_t^*(\alpha + \lambda) = \alpha + \lambda + tdf$.
\end{lemma}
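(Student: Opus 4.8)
The plan is to construct the isotopy $\varphi$ as the time-dependent Reeb flow with a position-dependent speed proportional to $f$, and to verify the stated pullback formula by a direct Cartan-calculus computation. Concretely, let $R$ denote the Reeb vector field of $\alpha$ on $V$, viewed as a vector field on $V \times B$ (with zero $B$-component). Set $X_t = f(b)\, R$, a time-\emph{independent} vector field on $V \times B$ (I will write $f$ for the pullback of $f$ to $V\times B$); completeness of the Reeb flow guarantees that $X_t$ is complete, since flowing along $f R$ is just flowing along $R$ for time $f(b)$ in each slice $V\times\{b\}$, and $f(b)$ is fixed along each flow line. Let $\varphi_t$ be its flow. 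Then $\varphi$ moves in the Reeb direction by construction, and it is relative to $V\times\{f=0\}$ because $X_t$ vanishes there.

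The heart of the verification is to differentiate $\varphi_t^*(\alpha+\lambda)$ in $t$. Since $\lambda$ lives on $B$ and $X_t$ has no $B$-component, $\varphi_t$ preserves each slice $V\times\{b\}$ and acts trivially on the $B$-directions, so $\varphi_t^*\lambda = \lambda$. For the $\alpha$ term, Cartan's formula gives
\[
  \frac{d}{dt}\varphi_t^*\alpha = \varphi_t^*\mathcal{L}_{X_t}\alpha
  = \varphi_t^*\big(\iota_{X_t}\d\alpha + \d(\iota_{X_t}\alpha)\big).
\]
Now $\iota_{R}\d\alpha = 0$ and $\iota_R\alpha = 1$, so $\iota_{X_t}\d\alpha = f\,\iota_R\d\alpha = 0$ and $\iota_{X_t}\alpha = f$; hence $\mathcal{L}_{X_t}\alpha = \d f$, a form that is already invariant (it involves only $B$-coordinates, on which $\varphi_t$ acts trivially), so $\varphi_t^*\d f = \d f$. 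Therefore $\frac{d}{dt}\varphi_t^*(\alpha+\lambda) = \d f$, and integrating from $0$ with $\varphi_0 = \Id$ yields $\varphi_t^*(\alpha+\lambda) = \alpha + \lambda + t\,\d f$, as claimed.

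I do not expect a serious obstacle here: the only point requiring a little care is the completeness/well-definedness of the flow of the non-compactly-supported vector field $fR$, which reduces cleanly to the assumed completeness of the Reeb flow because the coefficient $f(b)$ is constant along trajectories. One should also note that $\d f$ here means the differential of the pulled-back function on $V\times B$, which is the pullback of $\d f$ on $B$; this is what makes $\varphi_t^*\d f = \d f$ immediate. Everything else is a routine application of Cartan's magic formula.
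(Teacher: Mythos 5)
Your proposal is correct and is essentially the same argument as the paper's: take the flow of $fR$ on $V\times B$ (complete because $f$ is constant along trajectories and the Reeb flow is complete) and compute $\tfrac{d}{dt}\varphi_t^*\alpha = \varphi_t^*\Lie_{fR}\alpha = df$. Your extra remarks that $\varphi_t^*\lambda = \lambda$ and $\varphi_t^*df = df$ just make explicit details the paper leaves implicit.
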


\begin{proof}
  Let $R$ be the Reeb vector field of $\alpha$. Let $\varphi$ be the flow of $fR$ on
  $V \times B$, this flow is complete by assumption on $R$.
  The announced formula follows from $d(\varphi_t^*\alpha)/dt = \varphi_t^*\Lie_{fR} \alpha = df$.
\end{proof}

\begin{lemma}
  \label{lem:liouville_rescaling}
  Let $\lambda$ be a Liouville form with complete Liouville flow on a manifold $B$
  and let $(V, \xi)$ be a contact manifold. Given two contact forms $\alpha$
  and $\alpha'$ for $\xi$, there is an isotopy $\varphi$ of $V \times B$ which
  moves in the Liouville flow direction, is relative to the set where $\alpha' = \alpha$, 
  and such that $\ker(\varphi_1^* (\alpha+\lambda)) = \ker(\alpha' + \lambda)$.
\end{lemma}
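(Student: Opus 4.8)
The plan is to produce $\varphi$ explicitly, as the flow of a function times the Liouville vector field, exactly in the spirit of the proof of \cref{lem:reeb_rescaling}. Since $\alpha$ and $\alpha'$ are contact forms for the same cooriented $\xi$, first write $\alpha' = e^{g}\alpha$ for a smooth function $g \co V \to \R$; then the set where $\alpha' = \alpha$ is precisely $g^{-1}(0) \times B$. Let $X$ be the Liouville vector field of $\lambda$, characterized by $\d\lambda(X, \cdot) = \lambda$, so that $\Lie_X\lambda = \lambda$ and $\iota_X\lambda = 0$; by the completeness hypothesis its flow $\psi^{s}$ is defined for all $s \in \R$ and satisfies $(\psi^{s})^*\lambda = e^{s}\lambda$.

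I would then set $\varphi_t(x, b) = \bigl(x,\, \psi^{-t\,g(x)}(b)\bigr)$ for $(x,b) \in V \times B$. This family is the flow of the autonomous vector field $(x,b) \mapsto -g(x)\,X(b)$ on $V \times B$: in particular $\varphi_0 = \Id$, each $\varphi_t$ is a diffeomorphism (completeness of $\psi$), the trajectory through $(x,b)$ has velocity in $\{0\}\times \R X$, i.e. $\varphi$ moves in the Liouville flow direction, and $\varphi_t$ fixes every point of $g^{-1}(0) \times B$ for all $t$, so it is relative to $\{\alpha' = \alpha\}$, as required.

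It then remains to compute $\varphi_1^*(\alpha + \lambda)$, where $\alpha$ and $\lambda$ are understood as pulled back from the two factors of $V \times B$. Since $\varphi_1$ does not move the $V$-coordinate, $\varphi_1^*\alpha = \alpha$. For the other term, the differential of $(x,b) \mapsto \psi^{-g(x)}(b)$ at $(x,b)$ sends $(v,w) \in T_xV \oplus T_bB$ to $(\psi^{-g(x)})_* w \,-\, \d g_x(v)\,X\bigl(\psi^{-g(x)}(b)\bigr)$; pairing with $\lambda$, the first summand contributes $\bigl((\psi^{-g(x)})^*\lambda\bigr)_b(w) = e^{-g(x)}\lambda_b(w)$, while the second contributes $-\d g_x(v)\,(\iota_X\lambda)\bigl(\psi^{-g(x)}(b)\bigr) = 0$. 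Hence $\varphi_1^*\lambda = e^{-g}\lambda$ and
\[
  \varphi_1^*(\alpha + \lambda) = \alpha + e^{-g}\lambda = e^{-g}\bigl(e^{g}\alpha + \lambda\bigr) = e^{-g}\,(\alpha' + \lambda),
\]
so $\ker\varphi_1^*(\alpha + \lambda) = \ker(\alpha' + \lambda)$, which is the claim.

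There is no serious obstacle here — the whole argument is a short explicit computation — and the only step needing care is the last pullback, where the term coming from the $V$-derivative of $g$ must be seen to drop out. That cancellation is exactly $\iota_X\lambda = 0$, the defining property of a Liouville vector field; this is also why the statement is specific to Liouville (rather than arbitrary) $1$-forms and why it is completeness of the Liouville flow, not of the Reeb flow, that is assumed, in contrast with \cref{lem:reeb_rescaling}.
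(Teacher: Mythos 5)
Your argument is correct and is essentially the paper's proof: both take the flow of $\pm\ln(\alpha'/\alpha)\,X$ (the same isotopy up to the sign convention on $g$), use $\iota_X\lambda=0$ and $\Lie_X\lambda=\lambda$, and conclude $\ker\varphi_1^*(\alpha+\lambda)=\ker(\alpha'+\lambda)$. The only cosmetic difference is that you compute $\varphi_1^*\lambda$ directly from the differential of the map, whereas the paper differentiates $\varphi_t^*(\alpha+\lambda)$ in $t$ via the Lie derivative; the content is identical.
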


\begin{proof}
  Let $f = \alpha'/\alpha : V \to \mathbb R_+$ and $X$ be the Liouville
  vector field of $\lambda$. We set $g = \ln(1/f)$.
  Since $\lambda(X) = 0$ and $X \intprod d\lambda = \lambda$, $\Lie_{gX}\lambda = g\lambda$.
  Let $\psi$ be the flow of $X$ on $B$.
  Let $\varphi$ be the flow of $gX$ on $V \times B$: $\varphi_t(v, b) = (v, \psi_{tg(v)}(b))$.
  We have $d/dt(\varphi_t^*(\alpha + \lambda)) = \varphi_t^*(g\lambda) = ge^{tg}\lambda$. Hence
  $\varphi_t^*(\alpha + \lambda) = \alpha + e^{tg}\lambda$,
  and $\varphi_1^*(\alpha + \lambda) = \ker(\alpha + 1/f\lambda) = \ker(\alpha' + \lambda)$.
\end{proof}

In the preceding lemma, the completeness assumption on $\lambda$ is crucial. Indeed,
the discussion of large neighborhoods of contact submanifolds in
\cite{Niederkruger_neighborhood} proves that, when $W = \mathbb{D}^2$ and $\lambda = r^2d\theta$, even
a constant rescaling of a contact form is enough to get different contact
structures on $V \times B$. By contrast, the next lemma states that the completeness
assumption in \cref{lem:reeb_rescaling} does not cost much.

\begin{lemma}\label{lem:reeb_complete}
  Any contact manifold $(V,\xi)$ admits a complete Reeb vector field.
\end{lemma}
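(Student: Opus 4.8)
The plan is to start from any contact form $\alpha_0$ for $\xi$ and replace it by $\alpha = h\alpha_0$ for a suitably chosen positive function $h\co V \to \Rp$; multiplying a contact form by a positive function does not change $\xi$, so this is legitimate, and the task reduces to showing that $h$ can be chosen so that the Reeb flow of $h\alpha_0$ is complete. Recall that completeness of a vector field is automatic on a compact manifold, so the issue is only about the behavior near infinity; and since the Reeb vector field of $h\alpha_0$ has pointwise norm controlled by $h$ and its derivatives (with respect to any fixed auxiliary metric), the idea is to make $h$ \emph{small} near infinity so that the Reeb flow moves slowly enough to never escape to infinity in finite time.

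Concretely, first I would fix a complete Riemannian metric $g$ on $V$ and an exhaustion function $\rho\co V \to [0,\infty)$ which is proper and $1$-Lipschitz for $g$ (such a function exists: e.g. smooth out the distance to a fixed point, or invoke the standard fact that any manifold carries a proper function with bounded gradient). Writing $R_h$ for the Reeb vector field of $h\alpha_0$, a direct computation gives $R_h = \tfrac1h R_0 + \tfrac1{h^2} Z$, where $R_0$ is the Reeb field of $\alpha_0$ and $Z$ is a vector field in $\xi$ determined by $dh$ (explicitly $Z$ is the unique section of $\xi$ with $\alpha_0(Z)=0$ and $\iota_Z d\alpha_0 = dh - dh(R_0)\alpha_0$ restricted to $\xi$); in particular there is a continuous function $C\co V \to \Rp$, depending only on $\alpha_0$, $g$ and the $1$-jet data, with $|R_h|_g \le C\,(\tfrac1h + \tfrac{|dh|_g}{h^2})$ pointwise. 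Now choose $h$ so that $h \le 1$ and $|dh|_g \le 1$ everywhere and, on the region $\{\rho \ge k\}$, $C\cdot(\tfrac1h+\tfrac{|dh|_g}{h^2}) \le 1/(1+k)$ for each integer $k$; one builds such an $h$ by taking $h$ to decay fast enough (e.g. like a function of $\rho$ that goes to $0$ very quickly, with controlled derivative) using a partition-of-unity/interpolation argument over the shells $\{k \le \rho \le k+1\}$. With this choice, along any integral curve $\gamma$ of $R_h$ defined on $[0,T)$ one estimates $\big|\tfrac{d}{dt}\rho(\gamma(t))\big| \le |R_h|_g(\gamma(t)) \le 1/(1+\rho(\gamma(t)))$ once $\rho(\gamma(t))$ is large, which gives $\rho(\gamma(t))^2 \lesssim t$, so $\gamma$ stays in a compact set on any finite time interval; hence it extends, and the flow is complete.

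The main obstacle — and the only place that needs real care — is the construction of the cutoff function $h$: one must simultaneously keep $h$ positive and smooth, control $|dh|_g$, and force the combined quantity $\tfrac1h + \tfrac{|dh|_g}{h^2}$ (weighted by the a priori function $C$, which itself may grow) to be \emph{summably small} along the exhaustion, all while the lower-order term coming from $Z$ (the $1/h^2$ piece) fights against shrinking $h$. The cleanest way around this is to not try to be sharp: first absorb $C$ by replacing $\rho$ with $\max(\rho, \sup_{\{\rho\le\rho(x)\}} C)$ so that $C$ is bounded on each shell by a function of $\rho$ alone, then choose $h = \eta\circ\rho$ for a single smooth decreasing $\eta\co[0,\infty)\to(0,1]$ with $\eta' $ bounded and $\eta$ decaying fast enough (doubly-exponentially in $\rho$ suffices) that $\tfrac1{\eta(s)}+\tfrac{|\eta'(s)|}{\eta(s)^2}$ grows slower than, say, $e^s$ while the escape estimate still forces finite-time curves into compact sets. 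Everything else — the computation of $R_h$, the reduction to a differential inequality for $\rho\circ\gamma$, and the conclusion via Grönwall-type reasoning that bounded growth of $\rho$ along trajectories implies completeness — is routine.
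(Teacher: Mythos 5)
Your overall strategy (conformally rescale the contact form so the Reeb flow cannot escape to infinity in finite time, then run a differential inequality for a proper function along trajectories) is sound, but the rescaling goes in the wrong direction, and as a result the inequalities you demand cannot be satisfied. For $\alpha = h\alpha_0$ the Reeb field satisfies $\alpha_0(R_h) = 1/h$, and indeed your own formula $R_h = \tfrac1h R_0 + \tfrac1{h^2}Z$ shows that shrinking $h$ makes $R_h$ \emph{large}: a small conformal factor speeds up the Reeb flow rather than slowing it down. Concretely, you require both $h\le 1$ and $C\bigl(\tfrac1h + \tfrac{|dh|_g}{h^2}\bigr)\le \tfrac1{1+k}$ on $\{\rho\ge k\}$; since $h \le 1$ forces $\tfrac1h\ge 1$, the left-hand side is bounded below by $C>0$ and can never tend to $0$, so no such $h$ exists and the escape estimate collapses. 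The fix is to make the conformal factor \emph{large} near infinity (e.g. $h$ growing much faster than $C$ and than $|dh|_g/h^2$ allows), equivalently to rescale by a \emph{small} positive function in the Hamiltonian picture; with that reversal your Gr\"onwall-type argument does go through, though you still need to handle the auxiliary function $C$ coming from the metric bound on the $\xi$-component $Z$, which your "absorb $C$ into $\rho$" step only sketches.

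For comparison, the paper's proof takes exactly this corrected route but phrases it so that no Riemannian metric or bound on the $\xi$-component is ever needed: pick a proper positive $f$, set $g = df(R_\alpha)$, choose $\psi>0$ decreasing fast enough that $g\,\psi\circ f\le 1$, and let $X$ be the contact vector field with Hamiltonian $\psi\circ f$ (this is the Reeb field of $\alpha/(\psi\circ f)$, a rescaling by a function that is large at infinity). Writing $X = (\psi\circ f)R_\alpha + Y$ with $Y\in\xi$, the key point is that $df(Y)=0$ automatically, because $Y$ is dual via $d\alpha\rst{\xi}$ to $d(\psi\circ f)\rst{\xi}$, which is proportional to $df\rst{\xi}$, and $d\alpha(Y,Y)=0$. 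Hence $df(X) = g\,\psi\circ f\le 1$, so $f$ grows at most linearly along trajectories and completeness follows immediately; the awkward $1/h^2$ term that fights against your construction simply never appears.
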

\begin{proof}
  Let $f:V\to \R$ be a proper and positive function, and $\alpha$
  a contact form for $\xi$. Let $g = df(R_\alpha)$ and pick a function
  $\psi : (0,+\infty) \to (0,+\infty)$ decreasing sufficiently fast so that
  $g \psi\circ f \leq 1$. Then the contact vector field $X$ with
  contact Hamiltonian $\psi \circ f$ with respect to $\alpha$
  writes $X = \psi \circ f R_\alpha + Y$ where $Y \in \xi$ and $\d f(Y) = 0$.
  Hence $df(X) = g\psi\circ f\leq 1$ and $X$ is a complete Reeb vector field.
\end{proof}

\begin{proof}[Proof of \cref{prop:loose_stability}]
  Let $\alpha$ be a contact form for $\xi$ whose Reeb flow is complete
  as provided by lemma \ref{lem:reeb_complete}.
  This induces a trivialization
  $\Phi : V \times \mathbb R \times \mathbb R \to CSV$
  in which the contact structure reads $\ker(e^t \alpha+ \d s)$.
  The diffeomorphism $g \co (v, t, s) \mapsto (v, t, -e^t s)$ is relative
  to $\{s = 0 \}$ and pulls back the above contact structure to
  $\ker(\alpha - sdt - ds)$. \Cref{lem:reeb_rescaling} then gives a
  diffeomorphism $\Psi$ relative to $\{ s = 0 \}$
  which pulls back this contact structure to $\ker(\alpha - sdt)$.

  Let $B$ be a closed ball around the origin in $\mathbb{R}^3$,
  $\alpha_3 = dz - pdq$, and $\gamma$ a stabilization in $B$ with action $a$.
  We set $U_\rho = B \times [-\rho, \rho]^{n-1} \times [-\rho, \rho]^{n-1} \subset \mathbb{R}^{2n + 1}$.
  By definition of loose Legendrian embeddings, for each component $\Lambda_0$ of $\Lambda$,
  there are some $\rho$ and $a$ with $a/\rho^2 < 2$, and some contact embedding
  $\iota:(U_\rho, \ker(\alpha_3 -\sum y_i dx_i)) \to (V,\xi)$ such that
  $\iota^{-1}(\Lambda) = \iota^{-1}(\Lambda_0) = \gamma \times {0} \times [-\rho, \rho]^{n-1}$.
  Let $\beta$ be a contact form on $V$ such that $\iota^*\beta = \alpha_3 -\sum y_i dx_i$.

  \Cref{lem:liouville_rescaling} gives a self-diffeomorphism $\Theta$ of
  $U_\rho \times \mathbb{R}^2$ which pulls back $\ker(\alpha - sdt)$ to $\ker(\beta - sdt)$, and is relative
  to $\{ s = 0 \}$ since the relevant Liouville vector field is $s\partial_s$.
  The map $\hat{\iota} = \Phi \circ g \circ \Psi \circ \Theta \circ (\iota \times \Id_ {\mathbb{R}^2}) : U_\rho \times [-\rho,\rho]^2 \to CSV$
  is a loose chart for $CS\Lambda$.
\end{proof}

\subsection{Murphy's \texorpdfstring{$h$}{h}-principle}

Recall that when $M$ and $N$ are manifolds with boundary, an embedding
$k:M\to N$ is called \emph{neat} if $k^{-1}(\del N)=\del M$, and $k$ is transverse
to $\del N$ along $\del M$. A \emph{formal} Legendrian embedding in a contact
manifold $(V,\xi)$ is a couple
$(f,F_s)$ where $f:L\to V$ is an embedding and $F_s:T L \to T V$
is a family of monomorphisms covering $f$ such that $F_0=d f$ and $F_1(T L)$
is Legendrian (i.e. Lagrangian in the contact structure). When $F_s=d f$
for all $s$, it is a \emph{genuine} Legendrian embedding. We will usually
suppress $F_s$ from the notation for shortness.

The following statement is a parametric, relative, and folkloric version of
Murphy's h-principle for Loose Legendrian embeddings.

\begin{theorem}[\cite{Murphy_loose}]\label{thm:murphy}
  Let $(V, \xi)$ be a contact manifold with boundary of dimension at least $5$,
$L$ a connected compact manifold with boundary and $(k_t)_{t\in D^p}:L\to V$ a family
of neat formal Legendrian embeddings such that
  \begin{itemize}
    \item $k_t$ is genuine near $\del L$,
    \item $k_t$ is genuine for $t \in \del D^p$,
    \item there is a fixed loose chart $U \subset \Int (V)$ for all $k_t$,
      i.e. $k_t^{-1}(U) = \Lambda \subset \Int L$ is independent of $t$,
    $k_t$ is independent of $t$ on $\Lambda$ and the pair $(U, k_t(\Lambda))$ is a
    loose chart.
  \end{itemize}
  Then there exists a homotopy $(k_{t,s})_{(t,s)\in D^p \times[0,1]}$ of neat formal
  Legendrian embeddings such that
  \begin{itemize}
    \item $k_{t,0} = k_t$,
    \item $k_{t,s}$ has a fixed loose chart $U' \subset U$,
    \item $k_{t,s}=k_t$ near $\del L$,
    \item $k_{t,s}=k_t$ for $t\in \del D^p$,
    \item $k_{t,1}$ is a genuine Legendrian embedding.
  \end{itemize}
\end{theorem}

\section{Giroux's convexity theory}\label{sec:openbook}

\subsection{Convexity from symplectic to contact}

A Weinstein manifold, as defined in \cite{Eliashberg_Gromov_convex}, is a tuple
$(W, \lambda, f)$ where $\lambda$ is a 1-form whose differential is symplectic and $f$ is an
exhausting Morse function which is Lyapounov for the vector field $Z$ defined
by $Z\lrcorner \d\lambda = \lambda$. 
The union of all stable manifolds of critical points of $f$ form the so-called
\emph{isotropic skeleton} $L$, it is a union of isotropic
submanifolds diffeomorphic to euclidean space, on which $\lambda$ vanishes.
One may lift $L$ to the contactization $(W \times \R, \lambda + d t)$ as $L\times \{0\}$
which is a union of isotropic submanifolds. As soon as $f$ has finitely many critical
points and $Z$ is Morse-Smale, this satisfies the following tameness condition
which is very weak, but sufficient for most of our purposes.

\begin{definition}\label{def:isotropiccomplex}
A subset $L$ of a contact manifold of dimension $2n+1$ is called an \emph{isotropic
complex} if it admits a filtration:
\[\emptyset=L^{(-1)} \subset L^{(0)} \subset \cdots \subset L^{(n)}=L\]
such that for all $i\in\{0,\dots,n\}$, $L^{(i)}\setminus L^{(i-1)}$
is an isotropic submanifold without boundary of dimension $i$,
and near each point of $L^{(n)}\setminus L^{(n-1)}$, $L=L^{(n)}\setminus L^{(n-1)}$.
\end{definition}

Based on Murphy's work, \cite[Definition 11.29]{Cieliebak_Eliashberg_book}
defined the notion of a flexible Weinstein manifold as follows.

\begin{definition}\label{def:flexible}
$(W,\lambda,f)$ is \emph{flexible} if there is an increasing sequence of real numbers
$(c_i)_{i\geq 0}$ such that:
\begin{itemize}
\item $c_0<\min f$,
\item $c_i \to +\infty$,
\item $c_i$ is a regular value of $f$,
\item there are no $Z$-trajectories joining critical points in $\{c_i<f<c_{i+1}\}$,
\item the link of Legendrian spheres in $\{f=c_i\}$ corresponding to attaching spheres of
index $n$ critical points lying in $\{c_i<f<c_{i+1}\}$ is loose (see \cref{def:loose}).
\end{itemize}
\end{definition}

\begin{remark}\label{rem:flexible4}
By extension, we also use the word flexible in the case where
$W$ is $4$-dimensional and the attaching spheres are stabilized, see
\cref{rem:stabilized}. Note however that, although this assumption is enough for
our purposes, it is a priori not enough for the purpose of deforming Weinstein
structures.
\end{remark}

Using \cref{prop:loose_stability} we see that when $W$ is flexible (including
the $4$-dimensional case), the lift of its isotropic skeleton to the contactization
$CW$ is loose in the following sense.

\begin{definition}\label{def:loosecomplex}
An isotropic complex $L$ is \emph{loose} if $L^{(n)}\setminus L^{(n-1)}$
is loose in $V\setminus L^{(n-1)}$.
\end{definition}

\cite{Eliashberg_Gromov_convex} also proposed, as a
contact analogue of Weinstein manifolds, to study contact manifolds equipped
with a Morse function $f$ and a contact pseudo-gradient of $f$. Exploration of
this definition began in \cite{Giroux_these}, and eventually became Giroux's
open book theory. In particular it follows from the existence of supporting open
books that every closed contact manifold is convex, in contrast to the
symplectic case. Since this implication is not documented, and we need a rather
precise statement, we will explain it in this section. 

The following proposition is everything we will need. Recollections about open
books, including the precise meaning of ``exact symplectomorphic to pages'' will
be in \cref{sec:ideal_open_book}.

\begin{proposition}[Giroux $\sim 2001$]
  \label{prop:leg_skeleta}
  Let $(V, \xi)$ be a contact manifold supported by an open book $(K, \theta)$
  with Weinstein pages. Let $(W_\pm, \lambda_\pm, f_\pm)$ be two Weinstein manifolds that
  are exact symplectomorphic to the pages. There exist:
  \begin{itemize}
    \item 
      contact embeddings $j_\pm \co (W_\pm \times \mathbb{R}, \ker(\lambda + \d t)) \hookrightarrow (V \setminus K, \xi)$
      with disjoints images
    \item 
      a Morse function $F \co V \to \mathbb{R}$
    \item
      a pseudo-gradient $X$ for $F$ whose flow preserves $\xi$
  \end{itemize}
  such that, denoting $L_\pm$ the image of the skeleton of $W_\pm$ by the embedding
  $j_\pm$ restricted to $W_\pm\times\{0\}$, we have for every neighborhood $U_\pm$ of
  $L_\pm$ in $V$, the flow $\varphi_t$ of $X$ satisfies:
  \[
    \bigcap_{t \geqslant 0} \varphi_t\left(V \setminus U_-\right) = L_+ \qquad
    \bigcap_{t \leqslant 0} \varphi_t\left(V \setminus U_+\right) = L_-.
  \]
\end{proposition}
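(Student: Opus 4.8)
The plan is to build the open book data concretely from the abstract open book $(K, \theta)$ supporting $(V, \xi)$. Recall that a supporting open book gives a decomposition $V = (K \times D^2) \cup_\partial (W \times S^1)_\phi$ (the mapping torus of the monodromy $\phi$), with a contact form $\alpha$ adapted to it. The first step is to extract from this picture the two embedded copies of the contactization: restricting attention to a neighborhood of the binding complement, one sees that a small collar of a page $W \times \{0\}$ inside the mapping torus is contactomorphic to a piece of $(W \times \mathbb{R}, \ker(\lambda + dt))$. More precisely, using the adapted contact form, a neighborhood of $W \times \{\mathrm{pt}\}$ for two antipodal points $\mathrm{pt} = \pm$ on $S^1 = \partial D^2$ gives the two embeddings $j_\pm$; disjointness is arranged by taking the two points far apart on the circle and the neighborhoods small. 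Here I would need the precise normal form for adapted contact forms near a page, which is exactly the kind of under-documented Giroux material the paper promises to supply in \cref{sec:ideal_open_book}; I would cite \cite{Giroux_these, Giroux_ICM} and the reformulation there. Since the pages are only assumed \emph{exact symplectomorphic} to the given Weinstein manifolds $(W_\pm, \lambda_\pm, f_\pm)$, I would transport the standard local model along that exact symplectomorphism, using that an exact symplectomorphism between Liouville domains lifts to a contactomorphism of the contactizations (unique up to translation in the $\mathbb{R}$-factor once one fixes primitives), so the skeleton of $W_\pm$ lands on a well-defined subset $L_\pm \subset V$.

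The second step is the construction of the Morse function $F$ and the contact pseudo-gradient $X$. The idea, which is Giroux's contact analogue of the Weinstein picture, is this: on each half $W_\pm \times \mathbb{R}$ sitting inside $V \setminus K$, take (a suitably cut-off version of) the Weinstein function $f_\pm$ composed with the projection, plus a term in the $\mathbb{R}$-direction, and interpolate these two local functions across the mapping torus and over the binding $K \times D^2$ so that the binding $K$ (or a Morse function on it) contributes the remaining critical points. The pseudo-gradient $X$ is built from the Liouville-type vector field $Z_\pm$ of each page (which is the Reeb-flow-preserving direction on the contactization, cf.\ the explicit model $\ker(\lambda + dt)$ where the natural Liouville field on $W$ together with scaling gives a contact vector field) glued to a field that, near $K \times D^2$, points from $K$ outward toward the pages on the positive side and inward on the negative side. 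The precise requirement is that $X$ preserve $\xi$: on each local model this is automatic because $Z \lrcorner d\lambda = \lambda$ makes the flow of (the lift of) $Z$ a contactization contactomorphism, and near the binding one uses that the Reeb flow of the adapted form rotates the $D^2$-factor; the interpolation must be done through contact vector fields, which one arranges by specifying contact Hamiltonians and patching those (Hamiltonians patch freely since the space of contact vector fields is an affine space over functions).

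The third step is to verify the compression identities
\[
  \bigcap_{t \geqslant 0} \varphi_t(V \setminus U_-) = L_+, \qquad
  \bigcap_{t \leqslant 0} \varphi_t(V \setminus U_+) = L_-.
\]
For the forward statement: a point of $V \setminus (K \cup j_-(W_- \times \mathbb{R}))$ flows, under $X$, into the positive half $j_+(W_+ \times \mathbb{R})$ (points near the binding get pushed off it onto the pages; points in the negative half or the mapping torus get carried around to the positive side by the rotational part), and once inside $W_+ \times \mathbb{R}$ the flow is the Weinstein gradient flow in the $W_+$ factor times a contracting flow in $\mathbb{R}$, so it converges to the skeleton $L_+$; hence any point lying in $\varphi_t(V \setminus U_-)$ for all $t \geqslant 0$ must lie in every $\varphi_t$ of the complement of a neighborhood of $L_-$, forcing it into the basin of $L_+$, and taking the intersection over all $t$ pins it to $L_+$ itself. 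The subtlety is that $L_-$ is also $\varphi_t$-invariant and not swept into $L_+$, which is why we must remove the neighborhood $U_-$; and one must check that the Weinstein skeleton really is the forward limit set, which uses that $(W_\pm, \lambda_\pm, f_\pm)$ is genuinely Weinstein (Lyapunov condition, finitely many critical points for tameness — and here the flexibility hypothesis of the ambient theorem is what will eventually make the skeleton \emph{loose}, though that is not needed for this proposition itself). The negative-time statement is symmetric under reversing the $\mathbb{R}$-direction and swapping $\pm$.

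The main obstacle I expect is the global gluing: producing one honest Morse function $F$ and one honest contact pseudo-gradient $X$ on all of $V$ — not just on the three pieces — with $X$ Lyapunov for $F$ everywhere, with the only critical points being those of $f_\pm$ on the two pages plus the auxiliary ones on the binding, and with the flow behaving monotonically across the mapping torus region. The Weinstein and contactization models are rigid and easy on each piece; the difficulty is the interpolation near $K \times D^2$ and across the mapping torus, where one must simultaneously control the dynamics (everything drains to the positive/negative page) and the contact condition (stay within contact vector fields), and ensure no spurious critical points or recurrence appear in the transition region. I would handle this by working with contact Hamiltonians throughout and choosing the interpolating Hamiltonian to be strictly monotone in the relevant "angular" coordinate of the open book, so that the transition region contains no zeros of $X$ and $F$ strictly increases along $X$ there; the bookkeeping is routine once the local normal forms from \cref{sec:ideal_open_book} are in hand.
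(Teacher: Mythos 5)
Your overall strategy (embed contactizations of the pages, build $F$ and $X$ piecewise from the Weinstein data plus an ``angular'' term, glue through contact Hamiltonians monotone in $\theta$, then read off the dynamics from stable manifolds) is the right one and matches the paper's. But there is a genuine error at the crux of the construction: you propose that ``the binding $K$ (or a Morse function on it) contributes the remaining critical points''. This is incompatible with the conclusion you must prove. Since $L_\pm$ are compact subsets of $V\setminus K$, you may take $U_-$ disjoint from $K$; any zero $q\in K$ of $X$ is fixed by $\varphi_t$, so $q\in\bigcap_{t\geqslant 0}\varphi_t(V\setminus U_-)$ while $q\notin L_+$, and the first compression identity fails (similarly for the second). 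The correct construction has \emph{no} critical points on or near the binding: in the paper the only critical points of $F$ are exactly those of $f_-$ and $f_+$ sitting inside the two contactization pieces (cf.\ \cref{rmk:more_on_convex_functions}), while near $K$ the vector field is nowhere zero and carries points across the binding region from the $W_-$ side to the $W_+$ side. Getting this requires more than ``patching Hamiltonians'': the natural global choice (contact Hamiltonian $\sin\theta$ with respect to the ideal Giroux form, which restricts to $\pm Z_\pm$-type fields on the pages) does \emph{not} extend smoothly over $K$, and the fix is to replace the Hamiltonian $r^2\sin\theta=ry$ near $K$ by $\rho(r)y$ with $\rho$ positive at $0$, then check by direct computation that the resulting $X$ has no zeros there and that $d\theta(X)$ keeps the sign of $\sin\theta$. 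Your proposal never confronts this, and its stated dynamical picture at the binding is the one configuration that breaks the statement.

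A secondary gap: the proposition asks for contact embeddings of the \emph{entire} contactizations $W_\pm\times\mathbb{R}$ with prescribed form $\lambda_\pm+\d t$, whereas your first step only identifies a collar of a page with ``a piece'' of $W\times\mathbb{R}$. Producing the full $\mathbb{R}$-factor is not automatic: one first sweeps a page by the Reeb flow (so that $R_\alpha$ becomes $\partial_t$), corrects the compactly supported discrepancy $\d k$ between $\lambda$ and the induced form, and then stretches a finite slab to all of $W\times\mathbb{R}$ using the completeness of the Liouville flow of $\lambda$ (this is \cref{lem:reembed_page}, resting on \cref{lem:liouville_rescaling}); completeness is essential here, and the control of $\theta$ on $W\times(-\epsilon,\epsilon)$ obtained in that lemma is exactly what later lets the page vector fields glue to the global Hamiltonian $\sin\theta$. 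Deferring this to ``normal forms near a page'' understates what has to be proved, since those normal forms are part of the content of the proposition's proof.
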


The proof of the above proposition actually gives more information about the
relation between $(F, X)$ and the Weinstein data:

\begin{remark}
  \label{rmk:more_on_convex_functions}
  In the context of \cref{prop:leg_skeleta}, let $M$ be a real number
  bigger than all critical values of $f_-$ and $f_+$. One can construct $F$ such
  that:
  \begin{itemize}
    \item
      $F$ and $X$ extend the restrictions of $f_-$ and $Z_-$ to ${\{f_- \leqslant M\}}$ 
    \item
      $F$ and $X$ extend the restrictions of $4M - f_+$ and $-Z_+$ to ${\{f_+ \leqslant M\}}$ 
    \item
      critical points of $F$ are exactly critical points of $f_+$ and $f_-$ ; with
      the same index for points coming from $f_-$, and index augmented by one for
      points coming from $f_+$
    \item 
      $F\rst{K} = 2M$
    \end{itemize}
\end{remark}

\subsection{Contact structures and open book decompositions}
\label{sec:ideal_open_book}
An \emph{open book decomposition} of a manifold $V$ is a codimension
$2$ submanifold $V$ with trivial normal bundle together with
a fibration $\theta : V \setminus K \to \mathbb R /2\pi\mathbb Z$
which corresponds to the angular coordinate of $D^2$ in some tubular
neighborhood $K \times D^2$ of $K$ in $V$. $K$ is called the \emph{binding}
and the closure of the fibers of $\theta$ are called the \emph{pages}.
The pages are always cooriented using the canonical orientation
on $\mathbb{R}/2\pi \mathbb{Z}$. If $V$ is oriented, this orients the pages
and the binding is then oriented as the boundary of pages.
Recall the following important definition due to Giroux which provides
a link between contact structures and open book decompositions.

\begin{definition}\label{def:carried}
  A contact structure $\xi$ on $V$ is \emph{carried} by an open book
  $(K,\theta)$ if there exists a contact form $\alpha$ for $\xi$ such that
  \begin{itemize}
    \item $\alpha$ induces a contact form on $K$,
    \item $d \alpha$ induces a symplectic form on the interior of each page,
    \item the orientation of $K$ induced by $\alpha$
      agrees with the boundary orientation of the pages
      (oriented by $\d \alpha$).
  \end{itemize}
\end{definition}

Let us call any contact form as in definition \ref{def:carried} a
\emph{Giroux form} adapted to $(K,\theta)$. It will be more convenient
here to work with the following definition, which is tiny variation on the setup
discussed in \cite{Giroux_ideal}.

\begin{definition}\label{def:ideal_giroux_form}
  Let $(K, \theta)$ be an open decomposition of a manifold $V$. An
  \emph{ideal Giroux form} adapted to $(K, \theta)$ is a contact form $\alpha$ on $V \setminus K$ such
  that:
  \begin{itemize}
    \item
      $d\theta(R_\alpha) > 0$,
    \item
      there is some positive contact form $\beta$ on $K$ and
      a tubular neighborhood $i:K \times D^2\to V$ of $K$ such that
      $i^*\theta = \varphi$ and $i^*\alpha = d \varphi + \beta/r^2$ where
      $(r,\varphi)$ are polar coordinates on $D^2$.

  \end{itemize}
\end{definition}

The two notions are essentially equivalent according to the following lemma.

\begin{lemma}\label{lem:ideal_giroux}
  Let $V$ be a closed manifold with an open book decomposition $(K,\theta)$
  and $\xi$ a contact structure on $V$.
  \begin{enumerate}
    \item If $\xi$ is defined by an ideal Giroux form adapted to $(K,\theta)$
      then it is carried by $(K,\theta)$.
    \item If $\xi$ is carried by $(K,\theta)$, then after a small
      deformation of $\theta$ near $K$, it is defined by an ideal
      Giroux form adapted to $(K, \theta)$.
  \end{enumerate}
\end{lemma}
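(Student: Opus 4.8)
The plan is to treat the two implications separately, using the explicit normal form that both definitions impose near the binding $K$.

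For part (1), suppose $\alpha$ is an ideal Giroux form adapted to $(K,\theta)$. The idea is to multiply $\alpha$ by a function that vanishes to the right order along $K$ so as to obtain a genuine contact form on all of $V$ defining the same contact structure on $V\setminus K$. In the model neighborhood $i:K\times D^2\to V$ where $i^*\alpha = d\varphi + \beta/r^2$, I would set $\tilde\alpha = \chi(r)\,\alpha$ with $\chi$ a function of the radial coordinate that equals $r^2$ near $r=0$ and equals $1$ outside the neighborhood, interpolating monotonically; then $i^*\tilde\alpha = r^2 d\varphi + \beta$, which extends smoothly across $K=\{r=0\}$ to the contact form $r^2 d\varphi + \beta$ (its restriction to $K=\{r=0\}$ is exactly $\beta$, a contact form on $K$). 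One then checks the three bullets of \cref{def:carried}: the restriction to $K$ is $\beta$, which is contact; $d\tilde\alpha$ restricted to a page $\{\varphi=\mathrm{const}\}$ is a symplectic form, which near $K$ reads $d(r^2)\wedge d\varphi$-free $ = 2r\,dr\wedge d\varphi + d\beta$ on $\{\varphi=\text{const}\}$, i.e. $d\beta$ plus the area form $2r\,dr\,d\varphi$ coming from the disk factor — here one uses $d\theta(R_\alpha)>0$ away from $K$ to see $d\tilde\alpha|_{\text{page}}$ is symplectic globally, since that positivity says precisely that the Reeb direction is transverse to the pages, so $d\alpha$ (hence $d\tilde\alpha$, as $\tilde\alpha$ and $\alpha$ have the same kernel and pages are contact-type hypersurfaces in the fibration) is non-degenerate on the page tangent spaces; and the coorientation/orientation bullet follows from $d\theta(R_\alpha)>0$ together with the sign conventions fixing the page and binding orientations.

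For part (2), suppose $\xi = \ker\alpha$ is carried by $(K,\theta)$ via a Giroux form $\alpha$. Near $K$, the contact condition plus the fact that $\alpha|_K$ is contact and $d\alpha$ is symplectic on pages forces, after a contactomorphism of a tubular neighborhood (a relative Moser/Gray argument, or the standard neighborhood theorem for contact submanifolds with the extra open-book data), a normal form $i^*\alpha = h(r)\bigl(g(r)\,d\varphi + \beta\bigr)$ with $h>0$, $g(0)=0$, $g'(0)>0$, $g$ increasing; here I am using that the pages near $K$ look like $d\beta + (\text{disk area})$ and the binding is transverse to the Reeb flow. The deformation of $\theta$ near $K$ allowed in the statement lets me reparametrize the radial coordinate so that $g(r)=r^2$ after adjusting $h$ accordingly, and the resulting form is $h(r)(r^2 d\varphi + \beta)$. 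Dividing by $r^2 h(r)$ — legitimate on $V\setminus K$ — gives a form defining the same $\xi$ on $V\setminus K$ that reads $d\varphi + \beta/r^2$ near $K$, i.e. an ideal Giroux form, after cutting off the division factor smoothly to $1$ away from the neighborhood (which only rescales $\alpha$ by a positive function, preserving $\ker\alpha$). Finally $d\theta(R_\alpha)>0$ is exactly the reformulation away from $K$ of the condition that $\alpha$ (equivalently the rescaled form) has Reeb field positively transverse to the pages, which is part of being carried.

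The main obstacle is the normal-form step in part (2): extracting the precise model $i^*\alpha = h(r)(g(r)\,d\varphi+\beta)$ near $K$ from the three abstract bullets of \cref{def:carried} requires a careful relative neighborhood theorem that simultaneously controls the contact form, the symplectic structure on pages, and the fibration $\theta$, and it is where the permitted small deformation of $\theta$ gets used. The rest is bookkeeping with cutoff functions and checking signs against the orientation conventions for pages and binding fixed in \cref{sec:ideal_open_book}.
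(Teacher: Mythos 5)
Your overall strategy coincides with the paper's (rescale near the binding to pass between the two normal forms), but your part (1) fails as written. With $\chi(r)=r^2$ near $r=0$ you get $\tilde\alpha=\beta+r^2\,d\varphi$ near $K$, and on a page $\{\varphi=\mathrm{const}\}$ the term $2r\,dr\wedge d\varphi$ restricts to \emph{zero}, so $d\tilde\alpha|_{\mathrm{page}}=d\beta$, which is degenerate on the $2n$-dimensional page ($\partial_r$ lies in its kernel). Your verification of the second bullet of \cref{def:carried} keeps the ``disk area form'', but that form vanishes on a page, and your fallback argument (``same kernel as $\alpha$, pages are contact-type hypersurfaces'') is not valid: symplecticity of $d\alpha$ on pages is a condition on the contact form, not on $\xi$, since $d(g\alpha)=dg\wedge\alpha+g\,d\alpha$ picks up a term that can destroy nondegeneracy. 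The paper's choice is $\alpha'=f(r)(\beta+r^2d\varphi)$ with $f=1/r^2$ near the outer boundary and, crucially, $f'<0$ strictly for $r>0$: the term $f'\,dr\wedge\beta$ is exactly what makes the restriction to pages symplectic. Your $\chi$ forces $f\equiv 1$ (so $f'=0$) near $r=0$, and even on the interpolation region monotonicity of $\chi$ does not guarantee $f'<0$.

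In part (2) you correctly identify the neighborhood normal form as the crux, but you leave it unproven and state it in a stronger form than is available: the tubular neighborhood theorem for contact submanifolds gives $i^*\alpha=f\cdot(\beta+r^2d\varphi)$ with $f$ an arbitrary positive function (after changing the embedding $i$ without changing its differential along $K$), not $h(r)(g(r)d\varphi+\beta)$ with $h,g$ radial. Moreover you invoke the permitted deformation of $\theta$ in the wrong place: reparametrizing the radial coordinate is free (it is just a choice of tubular neighborhood), whereas the deformation is genuinely needed because the angular coordinate $\varphi$ of the new tubular neighborhood no longer agrees with $\theta$; the paper restores the compatibility demanded by \cref{def:ideal_giroux_form} by interpolating $\rho(r)\varphi+(1-\rho(r))i^*\theta$ near $K$, and your argument never addresses this. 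Finally, when you divide by $r^2h(r)$ and cut the factor off to $1$, the resulting form must still satisfy $d\theta(R)>0$ for \emph{its own} Reeb field in the interpolation region; rescaling changes the Reeb field, so the parenthetical ``equivalently the rescaled form'' is not automatic. As in part (1), this is exactly what the condition $\partial f/\partial r<0$ throughout the interpolation guarantees, and it is the condition your construction never imposes.
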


\begin{proof}
Let $\alpha$ be an ideal Giroux form adapted to $(K,\theta)$ and $i:K \times D^2 \to V$ a
tubular neighborhood of $K$ as in definition \ref{def:ideal_giroux_form}. We
modify $\alpha = \frac{\beta}{r^2} + \d \theta$ in this neighborhood by replacing it
by $\alpha' = f(r) (\beta + r^2 \d \varphi)$ with $f(r) = \frac{1}{r^2}$ near 
$\partial D^2$. The condition on $f$ for $\alpha'$ to be a Giroux form simply writes $f'<0$
for $r>0$ which can be easily achieved.

Conversely, let $\alpha$ be a Giroux form adapted to $(K,\theta)$. Pick a tubular
neighborhood $i:K \times D^2 \to V$ of $K$ such that $i^* \theta = \varphi$. The
conformal symplectic normal bundle $\nu$ of $K$, i.e. the $\d \alpha$-orthogonal to
$T K$ in $\xi$, inherits a trivialization from $i$. The tubular neighborhood
theorem for contact submanifolds then allows to deform $i$ without changing its
differential along $K \times \{0\}$ to achieve $i^*\alpha = f(\beta+r^2 \d \varphi)$ for
some positive function $f$ and a positive contact form $\beta$ for $K$. One
issue is that $i^*\theta$ is no longer equal to $\varphi$. However, $i^*\theta$
is the angular coordinate of coordinates which only differ by a diffeomorphism
tangent to the identity along $K \times \{0\}$. Hence we may deform $i^* \theta$ near
$K \times\{0\}$ so that it agrees with $\varphi$ near $K \times\{0\}$, simply by replacing
it with $\rho(r)\varphi + (1-\rho(r))i^*\theta$ for some cutoff function
$\rho(r)$ supported in a small neighborhood of $0$. To get an ideal Giroux form
it remains to replace the function $f$ by a function equal to $\frac{1}{r^2}$
near $K \times \{0\}$, equal to $f$ away from a neighborhood of $K \times\{0\}$ and still
satisfying the condition $\frac{\del f}{\del r}<0$ for $r>0$.
\end{proof}

Here are some nice features of ideal Giroux forms.
\begin{itemize}
  \item The contact structure $\ker \alpha$ extends smoothly over $K$ since
    $\ker(d\theta + \beta/r^2) = \ker(\beta + r^2d\theta)$.
  \item The interior of each page equipped with the restriction
    of $\alpha$ is a finite type complete Liouville manifold and its
    contact boundary at infinity is identified with $K$.
  \item The holonomy of the Reeb vector field $R_\alpha$
    defines \emph{exact symplectomorphisms} between the interior of the pages.
    Namely, if $\gamma:[a,b]\to \R/2\pi \Z$ is a path with $\gamma'(t) = 1$,
    then the holonomy map
    $h_\gamma : \theta^{-1}(\gamma(a)) \to \theta^{-1}(\gamma(b))$
    satisfies $h_\gamma^*\alpha = \alpha + d f_\gamma$
    where $f_\gamma(x)$, $x \in \theta^{-1}(\gamma(b))$ is the time that the Reeb flow
    takes to travel from $x$ to $\theta^{-1}(\gamma(b))$
    above $\gamma$. Since $R_\alpha = \del_\theta$ in $K \times D^2$, the function
    $f_\gamma$ is equal to $b-a$ outside of a compact set. In particular, the
    monodromy map $\phi: \theta^{-1}(0) \to \theta^{-1}(0)$
    (corresponding to $a = 0$, $b = 2\pi$) is compactly supported and satisfies
    $\phi^*\alpha = \alpha + \d f$ with $f>0$ and $f = 2\pi$ outside of a compact set.
\end{itemize}
In particular, it makes sense to speak of an open book decomposition
with page a given complete finite type Liouville manifold $(W,\lambda)$.
The following lemma allows to fix specific Liouville forms
(not only up to exact symplectomorphism) on any finite collection of pages,
and to get disjoint copies of contactizations of these Liouville manifolds
inside our contact manifold.

\begin{lemma}\label{lem:reembed_page}
  Let $V$ be a closed manifold equipped with an ideal Giroux form $\alpha$
  adapted to an open book $(K,\theta)$. Let $(W,\lambda)$ be a finite type
  Liouville manifold exact symplectomorphic to the pages. 
  For every $c \in \mathbb{S}^1$, and every open interval $I \subset \mathbb{S}^1$ containing $c$, there is a
  family of fibrations $\theta_s \co V \setminus K \to \mathbb{S}^1$ such that
  \begin{itemize}
    \item $\theta_s = \theta$ near $K$ and $\theta^{-1}(\mathbb{S}^1 \setminus I)$, and everywhere
      when $s = 0$, 
    \item $\alpha$ is adapted to $(K, \theta_s)$ for all $s$,
    \item there is an embedding $g : W \times \mathbb{R} \to \theta^{-1}(I)$ such that
      $g^*\alpha = f(\lambda + d t)$ everywhere, and $f = 1$ and
      $g^* \theta_1 = c + t \pmod{2\pi}$ on $W \times (-\epsilon, \epsilon)$, for some
      positive $\epsilon$.
  \end{itemize}
\end{lemma}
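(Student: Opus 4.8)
The plan is to start from the model picture provided by an ideal Giroux form and deform the fibration $\theta$ in two stages. First I would use the holonomy description of the Reeb flow recalled just before the lemma: since $\alpha$ is an ideal Giroux form, the Reeb field $R_\alpha$ is positively transverse to all pages and equals $\partial_\theta$ near $K$, so for a path $\gamma$ in $\mathbb{S}^1$ the holonomy $h_\gamma$ gives an exact symplectomorphism between pages which is the identity (up to the constant shift $b-a$) near the binding. Fixing $c$ and the interval $I$, parametrize a sub-interval $[c-\delta,c+\delta]\subset I$ by the flow of $R_\alpha$ to obtain a contact embedding of $\big(\theta^{-1}(c)\times(-\delta,\delta),\ \alpha\rst{\theta^{-1}(c)}+ds\big)$ into $\theta^{-1}(I)$; this is just the local flow box of a transverse Reeb vector field. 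On this flow box, $\theta$ pulls back to $c+s$. By hypothesis $(W,\lambda)$ is exact symplectomorphic to the page $\theta^{-1}(c)$ equipped with $\alpha\rst{\theta^{-1}(c)}$, so choosing such an exact symplectomorphism $\Psi\co W\to\theta^{-1}(c)$ we get $\Psi^*\big(\alpha\rst{\theta^{-1}(c)}\big)=\lambda+d\kappa$ for a compactly-supported-modulo-constant function $\kappa$. Composing with the flow box embedding gives a first candidate $g_0\co W\times(-\delta,\delta)\to\theta^{-1}(I)$ with $g_0^*\alpha=\lambda+d\kappa+ds$ and $g_0^*\theta=c+s$.

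The second stage is to absorb the extra exact term $d\kappa$ and to extend everything from the slab $W\times(-\delta,\delta)$ to $W\times\mathbb{R}$. To kill $d\kappa$ I would apply \cref{lem:reeb_rescaling} (after invoking \cref{lem:reeb_complete} to replace $\alpha$ by a contact form with complete Reeb flow, or simply work with the local Reeb flow on the flow box, which is what is really used): the lemma provides a diffeomorphism, moving in the Reeb direction and relative to $\{\kappa=\text{const}\}$, pulling back $\lambda+\alpha_{\text{Reeb}}+d\kappa$-type forms back to $\lambda+\alpha_{\text{Reeb}}$; applied here it turns $g_0$ into an embedding $g_1$ with $g_1^*\alpha=\lambda+ds$ on a possibly smaller slab. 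The conformal factor $f$ in the statement appears because outside $W\times(-\epsilon,\epsilon)$ we only need $g^*\alpha$ to be \emph{contact} and adapted, not literally $\lambda+dt$; so I would let $g$ agree with $g_1$ on $W\times(-\epsilon,\epsilon)$ and then interpolate: on $W\times(\epsilon,\infty)$ and $W\times(-\infty,-\epsilon)$ use the Liouville flow of $\lambda$ (which is complete since $(W,\lambda)$ has finite type) to rescale, so that $g^*\alpha=f(\lambda+dt)$ with $f$ a positive function that is $1$ near the center and is arranged to match the germ of $\alpha$ near $K$ and near $\theta^{-1}(\mathbb{S}^1\setminus I)$ as one approaches the boundary of the image. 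The fibration $\theta_1$ is then \emph{defined} by declaring $g^*\theta_1=c+t\pmod{2\pi}$ on the image of $g$ and $\theta_1=\theta$ outside a compact subset of $\theta^{-1}(I)$, extended smoothly in between; one checks $d\theta_1(R_\alpha)>0$ still holds because $g$ was built from the Reeb flow, so $\alpha$ remains an ideal (or at least a) Giroux form adapted to $(K,\theta_1)$. Finally, connecting $\theta_1$ to $\theta$ by the obvious linear homotopy $\theta_s$ supported in $\theta^{-1}(I)$, and checking that $d\theta_s(R_\alpha)>0$ is preserved along the way (it is, since the deformation only reparametrizes the $\mathbb{S}^1$-coordinate in a Reeb-monotone way), gives the family required by the first two bullets.

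The main obstacle, I expect, is the bookkeeping in the interpolation region: one must simultaneously (i) make $g^*\alpha$ exactly $f\cdot(\lambda+dt)$ with a single globally defined positive $f$, (ii) make $g^*\theta_1$ agree with $c+t$ near the center while matching the pre-existing angular coordinate $\theta$ near the ends, and (iii) keep the whole construction compatible with the standard normal form $d\varphi+\beta/r^2$ near $K$, so that $\alpha$ stays an ideal Giroux form rather than merely a Giroux form. Points (i) and (ii) are in mild tension because the exact symplectomorphism $\Psi$ and the Liouville rescaling each modify the primitive, and one needs the leftover discrepancy to be exactly the kind of Reeb-direction shift that \cref{lem:reeb_rescaling} can undo; getting the supports of all these modifications nested correctly inside $\theta^{-1}(I)$, away from $K$, is the delicate part. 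Everything else is a routine combination of the flow-box normal form for transverse Reeb fields, completeness of the Liouville flow for finite-type manifolds, and \cref{lem:reeb_rescaling,lem:liouville_rescaling}.
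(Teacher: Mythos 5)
Your overall strategy is the same as the paper's (Reeb flow box over the page $\theta^{-1}(c)$, an exact symplectomorphism $\Psi$ with $\Psi^*(\alpha\rst{\theta^{-1}(c)})=\lambda+d\kappa$, a shift in the Reeb direction to kill $d\kappa$, then a compression of $W\times\R$ via the complete Liouville flow of $\lambda$ and \cref{lem:liouville_rescaling}, and finally redefining the fibration so that $g^*\theta_1=c+t$ near $t=0$). But there is a genuine gap at the crux: nothing in your argument controls the $C^0$-size of $\kappa$. After precomposing with $(x,s)\mapsto(x,s-\kappa(x))$ the domain of your $g_1$ is $\{\kappa(x)-\delta<t<\kappa(x)+\delta\}$, and a uniform slab $W\times(-\epsilon,\epsilon)$ sits inside it only if $\|\kappa\|_{C^0}<\delta$. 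The thickness $\delta$ of the Reeb flow box is dictated by $I$ and by $d\theta(R_\alpha)$, while $\kappa$ comes with the arbitrary choice of exact symplectomorphism; there is no a priori relation, so ``a possibly smaller slab'' may simply not exist, and then the locus that should become $\{\theta_1=c\}$ cannot even be defined. This is exactly the point the paper addresses with a specific trick: replacing the identification $i\co W\to\theta^{-1}(c)$ by $\psi'_{-s}\circ i\circ\psi_s$, where $\psi$ and $\psi'$ are the Liouville flows of $\lambda$ and of $\alpha\rst{T\theta^{-1}(c)}$, which changes $\kappa$ into $e^{-s}\kappa\circ\psi_s$ and hence makes it arbitrarily $C^0$-small (while leaving unchanged the Reeb-time distances to $\theta^{-1}(\mathbb{S}^1\setminus I)$). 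Without this step, or an equivalent smallness mechanism, the construction breaks down before the extension stage.

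Two secondary points. First, your claim that $\theta$ pulls back to $c+s$ on the Reeb flow box is false away from $K$ (only $d\theta(R_\alpha)>0$ holds there; equality of speeds holds only near the binding where $R_\alpha=\partial_\theta$); this is not fatal since $\theta_1$ is defined anew, but it means the ``extend smoothly in between'' step is where all the work is: you must interpolate between $c+t$ on the image of $g$ and $\theta$ near $\theta^{-1}(\mathbb{S}^1\setminus I)$ while keeping a fibration with $d\theta_s(R_\alpha)>0$ for the whole family. The paper does this cleanly by working on the region $Y$ that maps diffeomorphically onto all of $\theta^{-1}(I)$ and deforming $j^*\theta$ among functions increasing in $t$ with fixed behavior near $\partial Y$; that deformation requires precisely the inequality $\kappa-f_-<0<\kappa+f_+$, i.e.\ again the $C^0$-smallness of $\kappa$. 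Second, matching with the normal form near $K$ is actually automatic (there $\kappa=0$ and the Reeb flow is $\partial_\theta$), so the concern you flag in (iii) is the easy part; the hard part is the one above.
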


\begin{proof}
  By assumption, there exists a diffeomorphism
  $i \co W \to \theta^{-1}(c)$ such that $i^*\alpha = \lambda + \d k$
  for some compactly supported function $k$.
  This diffeomorphism combines with the flow $\varphi$ of the Reeb field $R$ of $\alpha$ to
  give an immersion $j_0 \co W \times \mathbb{R} \looparrowright V \setminus K$ sending $(x, t)$ to $\varphi_t(i(x))$.
  We compute $j_0^*\alpha = i^*\varphi_t^*\alpha + \alpha(R)dt = \lambda + dk + dt$.
  Precomposing with $(x, t) \mapsto (x, t - k(x))$ gives a new immersion
  $j$ such that $j^*\alpha = \lambda + dt$, but now the page $\theta^{-1}(c)$ is the image of the
  graph of $\{t = k(x) \}$ in $W \times \mathbb{R}$. Let $f_+(x)$ (resp. $f_-(x)$) be the time taken by the
  flow $\varphi$ to travel (resp. travel back) from $i(x)$ to $\theta^{-1}(\mathbb{S}^1 \setminus I)$. Both
  these functions are bounded below by a positive constant thanks to the model
  behavior of $R$ near $K$. The restriction of $j$ to  
  $Y = \{(x, t) \;;\; k(x) - f_-(x) < t < k(x) + f_+(x)\}$ is a diffeomorphism onto
  $\theta^{-1}(I)$.

  What we know about $j^*\theta$ is that its level set $\{j^*\theta = c \}$ is
  $\{t = k(x) \}$. We would like to deform the function $j^*\theta$
  in a compact of $Y$, among increasing functions of $t$, so that we have $j^*
  \theta = c + t \pmod{2\pi}$ wherever $|t| < 2\epsilon$. This is possible if (and only if)
  $k - f_- < 0$ and $k + f_+ > 0$.
  This condition can be guaranteed by the following trick: denoting by $\psi$ and
  $\psi'$ the Liouville flows of $\lambda$ and $\alpha\rst{T\theta^{-1}(c)}$ respectively, replacing
  $i$ by $\psi'_{-s} \circ i \circ \psi_s$ replaces $k$ by $e^{-s}k\circ\psi_s$.
  Since $k$ was already compactly supported, it can be made arbitrarily
  $C^0$-small (note that this trick would not allow to get a $C^1$-small $k$),
  without changing $f_\pm$.

  It remains only to replace $Y$ by the full $W \times \mathbb{R}$ without loosing control of
  $j^*\theta$.
  Let $\rho : \R\to (-2\epsilon, 2\epsilon)$ be diffeomorphism equal to the
  identity on $(-\epsilon, \epsilon)$ and $h = \id \times \rho : W \times \R \to W \times (2\epsilon,
  2\epsilon)$.
  We have $h^*(\lambda + \d t) = \lambda + \rho'(t) \d t$. Since the Liouville flow
  of $\lambda$ is complete, lemma \ref{lem:liouville_rescaling}
  provides a diffeomorphism $\psi:W \times \R \to W \times \R$ fibered over
  $\R$ such that $\ker(\psi^*(\lambda + \rho'(t) \d t)) = \ker(\lambda + \d t)$,
  and $\psi = \Id$ on $W \times (\epsilon, \epsilon)$.
  The embedding $g = j \circ h\circ\psi$ has all the required properties.
\end{proof}

\subsection{From open book decompositions to convex Morse functions}

The goal of this section is to prove \cref{prop:leg_skeleta} and the companion
\cref{rmk:more_on_convex_functions}.

Let $\alpha$ be an ideal Giroux form adapted to $(K, \theta)$.
\Cref{lem:reembed_page} gives disjoint contact embeddings 
$i_\pm \co W_\pm \times \mathbb{R} \hookrightarrow V \setminus K$ such that, on 
$W_\pm \times (-\epsilon, \epsilon)$, $i_-^*\theta = t \pmod{2\pi}$, $i_-^*\theta = \pi + t \pmod{2\pi}$, and
$i_\pm^*\alpha_\pm = \lambda_\pm + \d t$. In particular $i_\pm^*R_\alpha = \partial_t$. 

The starting observation is that, in the contactization of $W_\pm$, the vector
field $\mp(Z_\pm + t\partial_t)$ is contact and pseudo-gradient of $\mp(f_\pm + t^2)$, with the
required dynamics. For the purpose of fitting both contactizations together, it
is more convenient to use the variant $\cos i_\pm^*\theta Z_\pm + \sin i_\pm^*\theta R_\alpha$
(which is the same thing up to order one when $t$ goes to zero, i.e. when $\theta$
is close to $0$ of $\pi$.).

Since $i_\pm^*\alpha(\cos i_\pm^*\theta Z_\pm + \sin i_\pm^*\theta R_\alpha) = \sin i_\pm^*\theta$, we can patch these
two vector fields to the contact vector field $X'$ on $V \setminus K$ with contact
Hamiltonian $\sin \theta$ with respect to $\alpha$ (this is almost the required $X$, but
will need some tweaking to extend over $K$). We have $X' = \sin \theta R_\alpha + Y$
where $Y$ is in $\ker d\sin \theta$, hence in $\ker d\theta$, and in $\xi$. 
In particular $d\theta(X') = \sin \theta d\theta(R_\alpha)$. On the image of $W_\pm$, $Y = \mp Z_\pm$ by
construction. 

We now begin to construct the Morse Lyapounov function $F$. In $i_-(W_- \times (-\epsilon, \epsilon))$, we
set $F_- = f_- + \lambda(1 - \cos \theta)$, for some large positive $\lambda$ to be specified later. This
is a Morse function which extends $f_-$ with critical points set 
$\{(w, 0) \;;\; w \in \crit f_-\}$, and the same Morse index than in $W_-$. We reduce
$\epsilon$ to make sure $\epsilon < \pi/2$, hence $\cos \epsilon > 0$. We then have:
\[
  \crit F_- \subset \{ F_- \leqslant M \} \subset W_- \times (-\epsilon/2, \epsilon/2)
\]
as soon as $\lambda \geqslant (M - \min f_-)/(1 - \cos(\epsilon/2))$ (remember $f_-$ is bounded below
by definition of a Weinstein structure). This has the desired pseudo-gradient
because $dF_-(X') = \cos t\, df_-(Z_-) + \lambda\sin^2 t$ in $W_- \times (-\epsilon, \epsilon)$.
We set $A_- = \{ F_- \leqslant M \}$. Our final $F$ will extend $F_-$ from $A_-$ to $V$.

In $i_+(W_+ \times (-\epsilon, \epsilon)$ we have almost the same situation if we set
$F_+ = 4M - f_+ - \lambda(1 + \cos \theta)$. One difference is that 
$-\cos \theta = -\cos(t + \pi) = \cos t$ so the index of critical points goes up by
one. The required estimates are now
\[
  \crit F_+ \subset \{ F \geqslant 3M \} \subset W_+ \times (-\epsilon/2, \epsilon/2)
\]
which hold whenever $\lambda \geqslant (M - \min f_+)/(1 - \cos(\epsilon/2))$.

Before bridging the gap between $A_-$ and $A_+ := \{F_+ \geqslant 3M\}$, we
need to modify $X'$ near the binding $K$.
In the tubular neighborhood $K \times \mathbb{D}^2$, the contact Hamiltonian of $X$ with
respect to $r^2 \alpha = \beta + r^2d\theta$ is equal to $r^2\sin \theta = r y$ (in particular,
$X$ does not extend smoothly on $K$). We replace this Hamiltonian by $\rho(r)y$
where $\rho$ is a smooth non-decreasing function interpolating between a positive constant
near $0$ and $r$. The corresponding contact vector field is our final $X$. 
It is smooth everywhere (since its Hamiltonian is) and coincides with $X'$ away
from a neighborhood of $K$. A computation shows that:
\[
  X = \frac{y}{2}(\rho - r \rho') R_\beta - \frac{\rho}{2} \cos \theta\del_r +
    \frac{y}{2r^2}(r\rho'+\rho)\del_\theta.
\]
In particular $d\theta(X)$ has the sign of $\sin(\theta)$ on all $V \setminus K$, and
$dx(X) = \cos \theta dr(X) - r\sin \theta d\theta(X) = -(\rho + r\sin^2\theta \rho')/2$ is
negative in $K \times \mathbb{D}^2$. Also $X$ is tangent to $\{ \theta = 0 \} \cup \{ \theta = \pi \}$.

We already saw that $X'$, hence $X$, goes transversely out of $A_-$ and into
$A_+$. The above computations also allow to check that $X$ has no zero outside 
$A_- \cup A_+$ and every point of $\partial A_-$ flows to $\partial A_+$ in finite time.
Hence one can extend $F_-$ and $F_+$ to a Morse function $F$ with no critical point 
outside $A_- \cup A_+$, admitting $X$ as a pseudo-gradient, and such that
that $F = 2M$ on $K \cup \{ \theta = \pm \pi/2\}$.

By construction, stable (resp. unstable) manifolds of critical points belonging to 
$\{ \cos \theta > 0 \}$ (resp. $\{ \cos \theta < 0 \}$) are the corresponding stable
manifolds of $Z_-$ (resp. $Z_+$), so this announced dynamics is ensured.

\subsection{Two lemmas about isotropic complexes}

\begin{proposition}\label{prop:displacing}
Let $V$ be a manifold, $\xi$ a cooriented hyperplane field, $L$ a compact submanifold
with conical singularities (in the sense of \cite{Laudenbach_conic}) whose
strata are
integral submanifolds of $\xi$, and $\phi_t$ an isotopy generated by a vector
field $X_t$ which is positively transverse to $\xi$. Then there exists $\epsilon>0$,
such that $\phi_t(L)\cap L = \emptyset$ for all $t\in]0,\epsilon]$.
\end{proposition}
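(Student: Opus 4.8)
\textit{The plan is to} reduce to a local statement near $L$ and then exploit the transversality of $X_t$ to $\xi$ together with the fact that the strata of $L$ are tangent to $\xi$. Since $L$ is compact, it suffices to find, for each point $p\in L$, a neighborhood $\mathcal{O}_p$ and an $\epsilon_p>0$ such that $\phi_t(L\cap \mathcal{O}_p)\cap \mathcal{O}_p=\emptyset$ for $t\in(0,\epsilon_p]$, and moreover such that $\phi_t(\mathcal{O}_p')\subset\mathcal{O}_p$ for $t$ small, where $\mathcal{O}_p'$ is a smaller neighborhood; a finite subcover then gives a uniform $\epsilon$, using compactness once more to control how far points travel in time $\epsilon$. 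So the real content is local.

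\textit{First I would} work in a chart adapted to the coorientation: choose a local defining function, or better a $1$-form $\alpha$ with $\ker\alpha=\xi$ near $p$, normalized so that $\alpha(X_t)>0$. \textit{The key observation} is to look at the function $u_t:=\int_p \alpha$ along a path, or more cleanly to track how the ``$\alpha$-height'' of points moves: since $\alpha(X_t)>c>0$ on a neighborhood (by compactness and transversality), the quantity measuring displacement in the coorientation direction grows linearly in $t$. On the other hand, because each stratum of $L$ is an integral submanifold of $\xi$, the $1$-form $\alpha$ restricts to zero on the smooth part of each stratum, so $L$ is (to first order) ``flat'' in the coorientation direction. The conical singularity hypothesis in the sense of Laudenbach is exactly what lets one control the remaining strata: near a conical point, $L$ is a cone over a link which is itself a stratified integral subset, and one argues stratum by stratum, or by induction on the depth of the stratification, that $L$ stays within $o(\text{distance})$ of a single $\xi$-hyperplane while $\phi_t$ pushes it a definite amount $\gtrsim ct$ off that hyperplane. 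Hence for $t\in(0,\epsilon]$, $\phi_t(L)$ lies in the region $\{u>ct/2\}$ while $L$ lies in $\{u<ct/2\}$ locally, giving disjointness.

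\textit{I expect the main obstacle} to be making the phrase ``$L$ stays within $o(\text{distance})$ of an integral hyperplane'' precise and uniform across the conical strata — i.e.\ promoting the pointwise tangency of strata to $\xi$ into a quantitative estimate valid on a whole neighborhood of a singular point, where the stratification can have positive depth. Concretely one wants: there is a smooth function $h$ near $p$ with $dh|_p$ defining $\xi_p$, $h(p)=0$, and a constant $C$ with $|h|\le C\,\mathrm{dist}(\cdot,p)^2$ on $L$ near $p$ — for a smooth integral submanifold this is immediate from $h|_L$ having vanishing differential, and for a conical integral complex one must check the cone structure does not destroy the quadratic bound (the cone over a link at distance $\delta$ from $p$ whose $h$-values are $O(\delta^2)$ still has $h=O(\delta^2)$ since the link sits at $h=0$ up to second order). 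Once this estimate is in hand, comparing with the linear-in-$t$ lower bound $h(\phi_t(x))-h(x)\ge ct - C't^2$ coming from $\alpha(X_t)>c$ and taking $\epsilon$ small closes the argument; the compactness packaging described above then upgrades the local $\epsilon_p$ to a global $\epsilon$.
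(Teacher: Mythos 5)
Your proposal has a genuine gap, concentrated exactly where the difficulty of the statement lies. The quantitative claim you hinge everything on --- a function $h$ with $dh|_p$ defining $\xi_p$ and $|h|\le C\,\mathrm{dist}(\cdot,p)^2$ on $L$ near a conical point $p$ --- is neither justified nor justifiable from the hypotheses, and your own sketch of why the cone structure preserves it is incorrect: if the link sits at height $O(\delta_0^2)$ at radius $\delta_0$, the ray through a link point at parameter $s$ sits at distance $s\delta_0$ but at height of order $s\,\delta_0^2$ (since $h$ is linear to first order along the ray), which is \emph{not} $O(s^2\delta_0^2)$; the quadratic bound is destroyed unless you use that the rays themselves are tangent to $\xi$. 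Moreover, ``conical singularity in the sense of Laudenbach'' only provides a $C^1$ chart in which $L$ is a cone, so even the correct version of this flatness is only $h=o(\mathrm{dist}(\cdot,p))$, not $O(\mathrm{dist}^2)$. The actual key fact, which your argument never invokes, is that each open ray of the cone lies in a single stratum, hence is tangent to $\xi$ at every point, and then continuity of a defining $1$-form in the $C^1$ chart forces the \emph{entire cone} to lie in the single hyperplane $(\varphi^*\xi)_0$; this is how the paper controls the singular points.

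The second, more structural problem is that even granting some pointwise flatness at the basepoint $p$, your separation argument (``$L$ lies in $\{h<ct/2\}$ while $\phi_t(L)$ lies in $\{h>ct/2\}$'' on a fixed neighborhood $\mathcal{O}_p$) cannot close: a point $x\in L\cap\mathcal{O}_p$ at a definite distance $\delta$ from $p$ may have $|h(x)|$ of size comparable to $\delta^2$ (or merely $o(\delta)$), which dwarfs $ct$ as $t\to 0$ with $\mathcal{O}_p$ fixed, so $L$ simply does not lie in $\{h<ct/2\}$. What you really need is a bound on \emph{secants between arbitrary pairs of nearby points of $L$}, not on the distance of $L$ to one tangent hyperplane at $p$: a collision $\phi_{t}(x)=y$ involves two points $x,y\in L$ with $(y-x)/t$ close to $X_0$, so one must show that accumulation points of such secant directions at any $z\in L$ lie in $\xi_z$. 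This is exactly the paper's route: argue by contradiction with sequences $x_n,y_n\to z$, $t_n\to 0$, and prove $C_zL\subset\xi_z$ by induction on the dimension of $L$, using the local product structure $(D^i\times D^{2n+1-i}, D^i\times L')$ along positive-dimensional strata and the cone-in-hyperplane fact at vertices. (Your local-to-global reduction also needs repair --- $\phi_t(L\cap\mathcal{O}_p)\cap\mathcal{O}_p=\emptyset$ is impossible as written and should involve $L$ --- but that is minor compared to the two points above.)
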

\begin{proof}
If this is wrong, we find sequences $x_n, y_n \in L$ and $t_n>0$
converging to $0$ such that $\phi_{t_n}(x_n)=y_n$. By compactness of $L$
we may assume that $x_n$ and $y_n$ converge, necessarily to the same point $z\in L$.
In local coordinates centered at $z$, we have $\phi_t(x)=x+t X_0(0)+o(|t,x|)$.
Hence, $\frac{y_n-x_n}{t_n}$ converges to $X_0(0)$ as $n$ goes to $+\infty$.
It is therefore enough to prove that at each point $z\in L$, the subset, denoted
$C_z L$, of $T_z V$ consisting of all accumulation points of sequences $\frac{y_n-x_n}{t_n}$
with $x_n,y_n \in L$ converging to $z$ and $t_n$ converging to $0$, is included in $\xi_z$
(and therefore does not contain $X_0(z)$). Let us now prove this fact, by induction on the
dimension of $L$. If $L$ is a finite number of point, it is clear. Assume we have proved it
when $\dim L < k$ and let $L$ be $k$-dimensional. At a point $z$ in a stratum of
dimension $i>0$, we have locally a product situation
$(V,L)\simeq (D^i\times D^{2n+1-i}, D^i \times L')$ where $L'$ is a $(k-i)$-dimensional
complex, and thus $C_z L= \R^i \times C_z L'$. By induction hypothesis, we have $C_z L' \subset \xi_z$
and thus $C_z L \subset \xi_z$. Now if $z$ is a $0$-dimensional stratum of $L$, we have
a $C^1$-chart $\varphi:(D^{2n+1},0)\to (V,z)$ such that $\varphi^{-1}(L)$
is the cone over a compact submanifold with conical singularities $L' \subset S^{2n}$.
For $x\in \varphi^{-1}(L)\setminus\{0\}$, the ray $\{tx,t\in(0,1)\}$ is contained
in a single stratum of $L$ and is hence tangent to $\xi$ at each point.
Since $\varphi^*\alpha$ is continuous at the origin, we obtain that this ray is
in fact entirely contained in $(\varphi^*\xi)_0$. Hence $\varphi^{-1}(L)$
is included in the hyperplane $(\varphi^*\xi)_0$ and we get $C_z L\subset \xi_z$
\end{proof}

\begin{remark}\label{rem:conicsing}
For any Weinstein manifold $(W,\omega, X)$, according to
\cite[Proposition~12.12]{Cieliebak_Eliashberg_book}, we may deform $X$ near the
critical points so that it
is the gradient vector field with respect to a flat metric there.
When this property is achieved, \cite[Proposition~2]{Laudenbach_conic}
guarantees that
the skeleton is a submanifold with conical singularities. \cref{prop:displacing}
will ensure that, when applying \cref{prop:leg_skeleta}, we may assume that
the skeleta $L_\pm$ are displaced by any small positive contact isotopy.
\end{remark}

Proposition \ref{prop:displacing} does not hold for a general isotropic complex
without some taming condition (in $\R^3$, think of Legendrian curves whose
Lagrangian projection spiral around the origin).

\begin{lemma}\label{lem:frag}
Let $\phi_t$, $t\in[0,1]$, be a contact isotopy of $(V,\xi)$ and $L$ an
isotropic complex such that for each $i\in\{0,\dots,n\}$, there exists a basis
of open neighborhoods $U_i$ of $L^{(i)}$ such that $L^{(i+1)} \setminus U_i$ is
diffeomorphic to a finite disjoint union of disks. Then there exists a
collection of contact isotopies $\phi_{i,t}$ supported in the interior of Darboux
balls $B_i$ such that, for $t$ near $0$, $\phi_t=\phi_{0,t} \circ \cdots  \circ \phi_{n,t}$ near $L$.
\end{lemma}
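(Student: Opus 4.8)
\textbf{Proof plan for \cref{lem:frag}.}
The plan is to build the isotopies $\phi_{i,t}$ one index at a time, peeling off the contribution of $\phi_t$ near each skeleton layer $L^{(i)}$ by a suitable cut-off of the generating contact Hamiltonian. First I would pick, for each $i$, a neighborhood $U_i$ of $L^{(i)}$ from the given basis that is small enough that the finite disjoint union of disks $L^{(i+1)}\setminus U_i$ sits inside a disjoint union of Darboux balls $B_i$ (one ball per disk, say, or a single ball if one prefers to first isotope the disks together); this uses only that a neighborhood of a properly embedded disk in a contact manifold is contactomorphic to a piece of standard $\R^{2n+1}$, together with the standard fact that small neighborhoods of points lie in Darboux charts. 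The key structural observation is that near $L$ the complex is stratified as $L^{(0)}\subset\cdots\subset L^{(n)}=L$, and $L\setminus U_i$ retracts onto the disks forming $L^{(i+1)}\setminus U_i$, so covering these disks by $B_i$ and $L^{(i)}$ by $U_{i-1}$ (with $U_{-1}=\emptyset$) gives, for $t$ near $0$, a covering of a neighborhood of $L$ by the open sets $B_0,\dots,B_n$ together with a fixed small neighborhood of $\emptyset$, i.e. really just by the $B_i$.

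Next I would carry out the fragmentation itself. Let $H_t$ be a contact Hamiltonian generating $\phi_t$ with respect to some contact form $\alpha$. Choose a partition of unity $\chi_0,\dots,\chi_n$ subordinate to the cover $\{B_i\}$ of a neighborhood $N$ of $L$, arranged (using the nesting of the $U_i$) so that $\chi_0+\cdots+\chi_n\equiv 1$ on a possibly smaller neighborhood of $L$. The naive guess ``$\phi_{i,t}$ = flow of $\chi_i H_t$'' does not literally compose to $\phi_t$, so instead I would define the pieces inductively from the left: set $\psi_{0,t}$ to be the flow of the contact Hamiltonian $\chi_0 H_t$ (compactly supported in $B_0$ after further cutting off away from $L$), then write $\phi_t\circ\psi_{0,t}^{-1}$, compute its generating Hamiltonian, multiply by the next cut-off function, and continue. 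Concretely, having defined $\phi_{0,t},\dots,\phi_{i-1,t}$, one considers $g_{i,t}:=(\phi_{0,t}\cdots\phi_{i-1,t})^{-1}\phi_t$; this is a contact isotopy which, near $L$, is generated by a Hamiltonian that agrees with $H_t$ conjugated by $\phi_{0,t}\cdots\phi_{i-1,t}$, and one extracts $\phi_{i,t}$ by cutting that Hamiltonian off to the ball $B_i$. Because everything happens for $t$ near $0$, the transported balls $(\phi_{0,t}\cdots\phi_{i-1,t})(B_i)$ stay inside a slightly enlarged fixed ball $B_i'$, which we take as the Darboux ball in the statement; this is where smallness of $t$ is essential.

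The main obstacle, and the step deserving the most care, is controlling supports: one must ensure both that each $\phi_{i,t}$ genuinely has compact support in a fixed Darboux ball (not a moving one), and that the leftover isotopy $\phi_{0,t}^{-1}\cdots\phi_{n,t}^{-1}\phi_t$ is the identity on an honest neighborhood of $L$ rather than just at points of $L$. The first point is handled by the $t$-small argument above plus choosing the cut-offs with a margin; the second is where the hypothesis that $L^{(i+1)}\setminus U_i$ is a disjoint union of \emph{disks} (hence lies in Darboux balls and has a neighborhood of product type) really enters, guaranteeing that after removing the $i$-th layer the remaining disks are still cleanly covered. I would organize the bookkeeping by proving, as an inductive claim over $i$ decreasing from $n$ to $0$, that $(\phi_{0,t}\cdots\phi_{i,t})^{-1}\phi_t$ is supported away from $L^{(i)}$ (for $t$ near $0$); the base case $i=n$ is the definition of $\phi_{n,t}$, and the case $i=-1$ (reading $L^{(-1)}=\emptyset$) yields $\phi_t=\phi_{0,t}\cdots\phi_{n,t}$ near $L$, as desired. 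A small technical point to dispatch along the way: the contact Hamiltonian of a conjugate $\phi\circ\eta_t\circ\phi^{-1}$ and of a product of isotopies are given by the usual explicit formulas, so each ``compute the generating Hamiltonian'' step is routine once the supports are under control.
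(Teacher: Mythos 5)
Your plan is essentially the paper's: construct the pieces one stratum at a time by cutting off the contact Hamiltonian (the section of $TV/\xi$) of the residual isotopy $(\phi_{0,t}\circ\cdots\circ\phi_{i,t})^{-1}\circ\phi_t$ near the disks of the next layer, which are placed inside a Darboux ball, and use smallness of $t$ to control supports and the matching region. However, as written there is a gap in the order of your choices. You fix all the neighborhoods $U_i$ at the outset, by a condition depending only on $L$. But at stage $i+1$ the residual isotopy is only known to be the identity on \emph{some} neighborhood $N_i$ of $L^{(i)}$ (the region where the already-built product agrees with $\phi_t$, which depends on the earlier cut-offs and on $\epsilon$), and your construction only makes the new piece agree with the residual near $L^{(i+1)}\setminus U_i$. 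If $U_i\not\subset N_i$, the points of $L^{(i+1)}\cap U_i$ lying outside $N_i$ and outside the region where the new cut-off equals $1$ are covered by neither mechanism, and the inductive claim you state at the end fails. The fix is to choose $U_i$ \emph{during} the induction, inside $N_i$ — this is precisely why the hypothesis gives a \emph{basis} of neighborhoods of $L^{(i)}$ with the disk property, rather than a single one. With that adaptive choice the residual's Hamiltonian vanishes identically on $U_i$ for $t\leqslant\epsilon_i$ (it is not ``$H_t$ conjugated'' there; this vanishing is the whole point, and it makes the cut-off harmless on $U_i$), and agreement near $L^{(i+1)}\setminus U_i$ holds for $t$ less than the time needed to flow out of the set where the cut-off is $1$.

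Three smaller remarks. The support question you worry about is automatic: since you cut off with a fixed function supported in $B_{i+1}$, the generated isotopy is supported in $B_{i+1}$ for all $t$, so no enlarged ball $B_i'$ and no partition of unity are needed. To get a \emph{single} Darboux ball per index, as the statement (and its use in the proof of Theorem A) requires, connect the disjoint Darboux balls around the disks along transverse arcs into one Darboux ball, rather than isotoping the disks together. Finally, your induction bookkeeping at the end is garbled: the induction goes up from $i=0$ (base case: cut off $\phi_t$ near the finite set $L^{(0)}$ inside a Darboux ball $B_0$), the invariant being that $(\phi_{0,t}\circ\cdots\circ\phi_{i,t})^{-1}\circ\phi_t$ is the identity on a neighborhood of $L^{(i)}$ for $t\leqslant\epsilon_i$, and the conclusion is the case $i=n$, not $i=-1$.
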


\begin{proof}
We proceed by induction. By assumption $L^{(0)}$ is a finite union of points and
it is thus contained in the interior of some Darboux ball $B_0$. Multiply the
section of $T V/\xi\to V\times[0,1]$ corresponding to $\phi_t$ by a function equal to
$1$ on some neighborhood $V_0$ of $L^{(0)}$, and supported in the interior of
$B_0$. This generates a contact isotopy $\phi_{0,t}$ supported in $\Int B_0$ such
that $\phi_{0,t}=\phi_t$ near $L^{(0)}$ and for $t \leqslant \epsilon_0$ where $\epsilon_0$ is a positive
number less than half the time needed for $\phi$ to move $L^{(0)}$ outside $V_0$.

Assume we have constructed $\phi_{i,t}$ and $B_i$ up to $i=k$ for some $k \geqslant 0$. Let
$U_k$ be an open neighborhood of $L^{(k)}$ such that $\phi_t=\phi_{0,t}\circ \cdots  \circ \phi_{k,t}$
on $U_k$ for $t \leqslant \epsilon_k$ and such that $L^{(k+1)}\setminus U_k$ is diffeomorphic to a
finite disjoint union of disks. Then $L^{(k+1)}\setminus U_k$ is contained in the
interior of a Darboux ball $B_{k+1}$ (each isotropic disk is, and then one can
connect the disjoint Darboux balls along transverse arcs). The section of
$TV/\xi \to V \times [0,1]$ corresponding to the isotopy 
$\phi_{k,t}^{-1}\circ \cdots  \circ \phi_{0,t}^{-1} \circ \phi_t$ vanish on $U_k\times[0,\epsilon_k]$ by
assumption. Multiply this section by a function equal to $1$ on a neighborhood
$V_{k+1}$ of $L^{(k+1)}\setminus U_k$ and supported in the interior of $B_{k+1}$ to get
a contact isotopy $\phi_{k+1,t}$. 
We have $\phi_{k+1,t}=\phi_{k,t}^{-1}\circ \cdots  \circ \phi_{0,t}^{-1}\circ \phi_t$ near $L^{(k+1)}$ and 
for $t \leqslant \epsilon_{k+1}$, where $\epsilon_{k+1}$ is positive, less than $\epsilon_k$, and less than
half the time needed for $\phi_{k+1}$ to move $L^{(k+1)}\setminus U_k$ outside $V_{k+1}$.
The result is now proved by induction.
\end{proof}

\begin{remark}\label{rem:cellcomplex}
If $(W,\omega,Z,f)$ is a Weinstein manifold of finite type such that $Z$ is Morse-Smale,
then its skeleton $L$ as well as its lift in the contactization satisfies the tameness
assumption of \cref{lem:frag}. Indeed, one may first deform the function $f$ (or $f+t^2$
in the contactization $W\times \R$ where $t$ is the $\R$-coordinate)
without changing $Z$, so that $f$ is ordered and for each $i$, the critical
points of index $i$ all lie in the same level set $\{f=i\}$. Now given any
neighborhood $V_i$ of $L^{(i)}$, we may deform $f$ without changing critical values
so that the sublevel set $U_i=\{f<i+\frac{1}{2}\}$ is contained in $V_i$.
Then $L^{(i+1)}\setminus U_i=L^{(i+1)}\cap \{i+\frac{1}{2}\leq f \leq i+1\}$
is diffeomorphic to a finite disjoint union of disks.
\end{remark}

\section{Transversality for contact transformations}
\label{sec:transversality_transfo}

In this section, we prove \cref{thm:thom-mather-contact}, a general
transversality theorem for multi-jets of families of contact diffeomorphisms.
In \cref{sec:cleaning}, it will ensure that certain properties of contact
isotopies hold after an arbitrarily small perturbation.

Let $(V, \xi)$ be a contact manifold, and let $B$ be any manifold.
We denote by $\D_B(V, \xi)$ the space of families of contact transformations of
$(V, \xi)$ parametrized by $B$, i.e. maps $f \co B \times V \to V$ such that each 
$f_b := f(b, \cdot) \co V \to V$ is a contact transformation: $(f_b)_* \xi = \xi$ for
all $b$. This space is equipped with the strong $C^\infty$ topology, which makes it a
Baire space:  a residual subset (i.e. a countable intersection of dense open
subsets) is dense. Note that $f_b$ is automatically a local diffeomorphism if
$f$ is in $\D_B(V, \xi)$, and the subset of such maps where $f_b$ is a global
diffeomorphism for all $b$ is open.

Inside the space $J^k_l(B \times V, V)$ of $k$-jets of germs of maps from $B \times V$ to
$V$ at $l$ distinct points, we consider the subspace $J^k_l(B \times V, V ; \xi)$
coming from such contact families.
Since all contact structures are locally isomorphic, we have 
$J^0_l(B \times V, V ; \xi) = J^0_l(B \times V, V)$. However for $k\geq 1$, one has a strict
inclusion. These subsets are still nice according to the following proposition,
whose proof is postponed to the end of the section.

\begin{proposition}\label{prop:Rksubmanifold}
For each $k$ and $l$, $J^k_l(B \times V, V ; \xi)$ is a submanifold of 
$J^k_l(B \times V, V)$ and the projection 
$J^{k+1}_l(B \times V, V ; \xi) \to J^k_l(B \times V, V ; \xi)$ is a submersion.
\end{proposition}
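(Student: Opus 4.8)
The plan is to produce explicit coordinates on $J^k_l(B\times V,V;\xi)$ by working one jet-point at a time (the $l$ points are at distinct positions in $B\times V$, so the multi-jet space is locally the product of $l$ copies of the one-point jet space, and it suffices to treat $l=1$). Fix a source point $(b_0,x_0)\in B\times V$ and a target point $y_0\in V$. Choose Darboux charts: coordinates near $x_0$ in which $\xi=\ker\alpha_{\mathrm{std}}$ and coordinates near $y_0$ of the same form, plus arbitrary coordinates on $B$ near $b_0$. In these charts a contact family is, germ-wise, a map $f\colon B\times V\to V$ with $f_b^*\alpha_{\mathrm{std}}=e^{g_b}\alpha_{\mathrm{std}}$ for some function $g$ on $B\times V$ (the conformal factor). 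I would first observe that the $k$-jet of $f$ at $(b_0,x_0)$ is constrained exactly by: the $(k-1)$-jet in the $x$-variables of the equation $f_b^*\alpha=e^{g_b}\alpha$, where $g$ (equivalently its jet) is a free auxiliary parameter. Concretely, the space of admissible $k$-jets is cut out inside the affine space of all $k$-jets $J^k_1(B\times V,V)$ by the vanishing of a polynomial map $\Theta$ whose components are the coefficients of the $1$-form $f_b^*\alpha-e^{g_b}\alpha$ up to order $k-1$ in $x$ and all orders in $b$; here $g$ and its derivatives are treated as new free variables, so one is really describing the image of the projection that forgets the $g$-jet.

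The key step is to check that this description exhibits $J^k_1(B\times V,V;\xi)$ as a manifold and the tower projections as submersions. For this I would argue as follows. The map sending a pair (a $k$-jet of $f$, a $k$-jet of $g$) to the jet of the $1$-form $f_b^*\alpha-e^{g_b}\alpha$ is a polynomial submersion onto the jet space of $1$-forms, by the same linear-algebra computation that underlies Gray stability / the Moser method: at the linearized level, prescribing the derivative of $f$ in a Reeb-transverse direction and the value of $g$ one can realize any increment of the $1$-form. Therefore $J^k_1(B\times V,V;\xi)$, being the zero set of a submersion composed with a projection, is a submanifold, and its codimension is the dimension of the relevant jet space of closed-condition $1$-form coefficients. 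I would then get the submersivity of $J^{k+1}_1\to J^k_1$ (restricted to the contact loci) for free: the unrestricted projection $J^{k+1}_1(B\times V,V)\to J^k_1(B\times V,V)$ is a submersion, it sends the contact sublocus onto the contact sublocus, and the defining equations at level $k+1$ involve the top-order $(k+1)$-jet coefficients linearly and surjectively (again by the Moser-type linear algebra, now applied to the order-$k$ term of the $1$-form), so the fibers of the restricted projection are affine subspaces of the expected dimension. Assembling the $l$ points back together multiplies everything, so the multi-jet statement follows.

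The main obstacle I expect is organizing the bookkeeping cleanly rather than any conceptual difficulty: one must be careful that the conformal factor $g$ is genuinely a free parameter (so that we are describing an image of a projection, not just a zero set), and that ``contact'' is the condition $f_b^*\alpha \wedge (\text{a prescribed volume}) \neq 0$ together with $f_b^*(d\alpha)$ restricted to $\ker(f_b^*\alpha)$ being nondegenerate — i.e.\ one should phrase the constraint purely as $f_b^*\alpha = e^{g_b}\alpha$ with $g$ free, which automatically encodes preservation of $\xi$ once $f_b$ is a local diffeomorphism, and the local-diffeomorphism condition is open so it does not affect the submanifold claim. A secondary subtlety is equivariance/naturality: since the answer should not depend on the choice of Darboux charts, I would either note that the defining equations transform correctly under contact changes of coordinates, or phrase the whole argument invariantly using the contact distribution and a fixed local contact form, which makes the Moser computation coordinate-free. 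Once these points are pinned down, the proof is a routine application of the parametric transversality/jet formalism.
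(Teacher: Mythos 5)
The step you present as an observation---that the $k$-jets of families of contact transformations are cut out exactly by the order-$(k-1)$ truncation of the equation $f_b^*\alpha=e^{g_b}\alpha$, with the jet of $g$ a free parameter---is where the actual content of the proposition lies, and as stated it is false. Eliminating $g$ (contract with the Reeb field and set $e^{g}=(f_b^*\alpha)(R_\alpha)$), your condition is equivalent to the vanishing of the $(k-1)$-jet of $f_b^*\alpha\wedge\alpha$, i.e.\ to the condition $\omega_2$ of \cref{lem:description_Hk} alone; it does not impose the second condition $\omega_3$, which involves $f_b^*d\alpha$, at top order. Differentiating your truncated equation only controls $\omega_3$ to order $k-2$, one order short. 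Concretely, for $k=1$ your equations say only that $df_0$ preserves the cooriented hyperplane $\xi$ at the point, not that $df_0|_{\xi}$ is conformally symplectic with the matching factor: in $(\R^3,\ker(dz-y\,dx))$ the linear map $(x,y,z)\mapsto(2x,2y,z)$ satisfies $f^*\alpha=\alpha$ at the origin (so your order-$0$ equation holds with $g(0)=0$), but $f^*d\alpha=4\,d\alpha$ there, so it is not the $1$-jet of any contact transformation. Hence the locus you describe is strictly larger than $J^k_1(B\times V,V;\xi)$, and your submersion computation, even if completed, would establish smoothness of the wrong set.

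Even after adding the missing equations, the identification of the formal solution set with the set of jets of \emph{genuine} contact families---your word ``exactly''---is the hard direction and is not addressed at all. This is precisely \cref{lem:description_Hk}: given a jet solving the equations, one must build an honest family of contact transformations with that jet, which the paper does by a Moser path method in which the correcting isotopy has to be shown to be $o(|b,v|^k)$ (via a homotopy-operator estimate in Darboux coordinates) so as not to disturb the $k$-jet; the paper explicitly flags in a footnote that this characterization/closedness is not obvious. Two further points: the image under a projection of the zero set of a submersion is not automatically a submanifold (here you would need to use that the $(k-1)$-jet of $g$ is determined by the jet of $f$, so that the zero set fibers over its image with affine fibers); and the paper's route to smoothness is different in spirit---it views jets at a point as a Lie group $G^k_{p,2n+1}$, identifies the contact jets as a closed subgroup $H^k_{p,2n+1}$ via the explicit $\omega_2,\omega_3$ description, invokes Cartan's closed-subgroup theorem, and globalizes with Heisenberg right translations---rather than a regular-value argument in jet space. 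Your linearized surjectivity claim (Moser-type linear algebra) is plausible and could perhaps be developed into an alternative proof, but only after the two gaps above are filled.
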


Our version of the Thom-Mather transversality theorem for families of contact
diffeomorphisms can now be stated as follows.

\begin{theorem}\label{thm:thom-mather-contact}
The families of contact diffeomorphisms whose multijet extension is transverse
to a given submanifold $\Sigma$ of $J^k_l(B \times V, V ; \xi)$ form a residual subset
of $\D_B(V, \xi)$.
\end{theorem}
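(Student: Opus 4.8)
The plan is to adapt the classical proof of the Thom–Mather multijet transversality theorem; the only genuinely new point is that every perturbation has to be carried out inside the class of contact families, and this is exactly what \cref{prop:Rksubmanifold} is designed to handle.

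First I would localize. The multi-source space $X := (B\times V)^{(l)}$ of $l$-tuples of pairwise distinct points of $B\times V$ is an open manifold, and I would write it as a countable union of compact sets $C_m$, each at positive distance from the big diagonal. For each $m$, let $\mathcal{T}_m\subset\D_B(V,\xi)$ be the set of $f$ whose multijet extension $j^k_l f\colon X\to J^k_l(B\times V, V ; \xi)$ is transverse to $\Sigma$ along $C_m$. Compactness of $C_m$ makes $\mathcal{T}_m$ open in the strong $C^\infty$ topology, so it remains to prove each $\mathcal{T}_m$ is dense; then $\bigcap_m\mathcal{T}_m$ is residual since $\D_B(V,\xi)$ is Baire, and on it $j^k_l f$ is transverse to $\Sigma$ everywhere, which is the assertion of the theorem.

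For density, fix $f\in\D_B(V,\xi)$ and a contact form $\alpha$ near the $V$-projection of $C_m$, so that contact vector fields are identified with their contact Hamiltonians. I would build a finite-dimensional family of perturbations $f^\tau$, with $\tau$ in a small ball around $0$ in $\mathbb{R}^N$ and $f^0=f$, of the form $f^\tau_b=\psi_{\tau,b}\circ f_b$, where each $\psi_{\tau,b}$ is a composition of time-$\tau_j$ flows of contact vector fields whose Hamiltonians are fixed bump functions times coordinate polynomials in Darboux charts, cut off — including in the $B$-variable — so as to act independently near each of the $l$ target points; here the positive distance of $C_m$ to the diagonal is what decouples the $l$ perturbations. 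The crucial requirement is to choose these Hamiltonians so that the total evaluation map $\Phi\colon(\tau,\vec s)\mapsto j^k_l f^\tau(\vec s)$ is a submersion onto $J^k_l(B\times V, V ; \xi)$ in a neighborhood of $\{0\}\times C_m$. Granting this, $\Phi$ is in particular transverse to $\Sigma$, so the parametric transversality theorem applied to $\Phi^{-1}(\Sigma)$ gives a residual — hence nonempty, and containing points arbitrarily near $0$ — set of $\tau$ for which $\vec s\mapsto j^k_l f^\tau(\vec s)$ is transverse to $\Sigma$ near $C_m$; any such small $\tau$ produces $f^\tau\in\mathcal{T}_m$ arbitrarily $C^\infty$-close to $f$, which is density.

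The main obstacle is precisely the submersivity of $\Phi$: one must check that confining ourselves to contact families costs no jet directions. By the chain rule for jets, the $k$-jet of $\psi_{\tau,b}\circ f_b$ at $x$ depends only on the fixed jet of $f$ and on the $k$-jet of $\psi_{\tau,b}$ at $f_b(x)$, so the problem reduces to showing that the orbit map from ($b$-dependent) contact Hamiltonians into the fibre of $J^k_l(B\times V, V ; \xi)$ over its source-and-target data is a submersion at the identity. This is where \cref{prop:Rksubmanifold} enters: the fact that $J^k_l(B\times V, V ; \xi)$ is a submanifold and that each projection $J^{r+1}_l(B\times V, V ; \xi)\to J^r_l(B\times V, V ; \xi)$ is a submersion lets one prescribe the perturbing Hamiltonians order by order — at order $r+1$, once the lower-order jet has been matched, every remaining tangent direction of the order-$(r+1)$ contact jet space is still realized by adjoining a degree-$(r+1)$ coordinate polynomial times the bump function (with a $b$-dependent coefficient for the $B$-directions). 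Performing this at each of the $l$ points independently and assembling the pieces yields the required family $\Phi$, and completes the argument; the non-compactness of $V$ and $B$ is absorbed, as usual, by the strong $C^\infty$ topology and the countable exhaustion.
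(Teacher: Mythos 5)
Your route is genuinely different from the paper's. The paper never builds a perturbation family: it encodes a contact family $f$ as a Legendrian graph $\Lambda_f$ in $T^*B\times V\times V\times\R$, uses a Weinstein tubular neighbourhood to identify nearby Legendrian sections with \emph{holonomic} sections of $J^1(B\times V)$, and thereby transports the whole problem to the classical Thom--Mather theorem at order $k+1$. Your plan of redoing the classical perturbative proof inside the contact class is legitimate in principle, but as written it has a gap precisely at the multijet decoupling step. You perturb by post-composition, $f^\tau_b=\psi_{\tau,b}\circ f_b$, with Hamiltonians cut off near the $l$ \emph{target} points (and in $b$), and you assert that the positive distance of $C_m$ to the diagonal decouples the $l$ perturbations. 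That diagonal lives in the source, whereas your cutoffs live in the target: elements of $\D_B(V,\xi)$ are only families of \emph{local} contactomorphisms (the paper explicitly singles out the families of global diffeomorphisms as an open subset), so two distinct source points $(b,x_1)\neq(b,x_2)$ may well satisfy $f_b(x_1)=f_b(x_2)$. At such a configuration both $k$-jets are modified by one and the same germ of $\psi_{\tau,b}$, the evaluation map $\Phi$ is not a submersion onto the product of the two fibres, and the parametric transversality step collapses. The standard cure, which you need to make explicit, is to perturb by \emph{pre}-composition with contact flows cut off near the genuinely distinct source points: since each $f_b$ is a local contactomorphism, the variations $Df_b\circ Z_b$ realize exactly the same infinitesimal jet directions as $Y_b\circ f_b$, so nothing is lost and the decoupling becomes honest.

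A second, smaller point concerns what you call the main obstacle. The submersivity of the orbit map of ($b$-dependent) contact Hamiltonians onto the fibre of $J^k_l(B\times V,V;\xi)$ does not follow from \cref{prop:Rksubmanifold}: that proposition says the contact jet spaces are submanifolds and the truncation maps are submersions, but it says nothing about which tangent directions are swept out by Hamiltonian flows, so your order-by-order induction does not close. What is actually needed is the infinitesimal counterpart of \cref{lem:description_Hk}: the tangent space of the contact jet group consists exactly of $k$-jets of families of contact vector fields, and writing $Y=H R_\alpha+W$ with $W\in\xi$, the linearized equations pin down $W$ only through order $k-1$ while its top-order part is freely realized by the degree-$(k+1)$ part of a polynomial Hamiltonian (times your bump). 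This is elementary, but it is the real content of the step, and it should be argued from the explicit description of $H^k_{p,2n+1}$ rather than attributed to \cref{prop:Rksubmanifold}. (You should also exhaust $\Sigma$, not just the source, by compacta when claiming openness, since transversality to a non-closed submanifold along a compact set is not open; this is routine.) With these repairs your argument goes through and gives a self-contained alternative to the paper's Legendrian-graph reduction.
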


\begin{remark}\label{rem:stratif}
  Since a countable intersection of residual subsets is still residual, one may
  impose simultaneously countably many transversality conditions possibly with
  varying $(k,l)$. In particular, it applies to a stratified subset. Openness
  of the set of families satisfying the transversality condition is more
  subtle, and requires additional properties of the stratification. Since we do
  not need this property in our application, we will not discuss it further,
  and content ourselves with residual subsets.
\end{remark}

\begin{proof}
Pick a contact form $\alpha$ for $\xi$. To any family $f\co B \times V \to V$
of contact diffeomorphisms, we associate its Legendrian graph as follows%
\footnote{The intrinsic definition, without using $\alpha$, would be less convenient here.}.
We have, for $(b,v) \in B \times V$, $(f^*\alpha)_{(b,v)} = \mu_v+e^{g(b,v)} \alpha$ where $\mu_v$ is a
$1$-form on $B$ smoothly depending on $v$ and $g$ is a function on $B \times V$.
With these notations we define $\Lambda_f \co B \times V \to M := T^*B \times V \times V \times \mathbb{R}$
by $\Lambda_f(b,v) = (\mu_v,v,f(b,v),g(b,v))$ and compute that it is Legendrian for
the contact form $\lambda_B + e^t\alpha_1 - \alpha_2$
where $\lambda_B$ is the canonical $1$-form on $T^* B$ and $\alpha_i$ is the pullback
of $\alpha$ under the $i$-th projection to $V$. Let $\tau$ be the projection of $M$ onto
the second $V$ factor. The same computation shows any Legendrian section $\sigma$ of
$M \to B \times V$ gives rise to a family $\tau \circ \sigma$ of  local contact diffeomorphisms.
Hence the space of families of contact diffeomorphisms now sits as an open set
in the space of Legendrian sections of $M$. The statement of the theorem
is therefore equivalent to the fact that the set of Legendrian sections of $M$
whose corresponding family has its multijet transverse to $\Sigma$ is a
residual subset. 

Let us now have a look at the Legendrian graph construction
at the level of multijets. Define $M^{(k)}_l$ to be the $(k, l)$-multijet extension of
$M \to B \times V$, i.e. the space of $k$-jets of sections at $l$ distinct points.
Let $\RelLeg^k_l \subset M^{(k)}_l$ be the differential relation corresponding
to multijets of Legendrian sections of $M$. We set $X = B \times V \times V$
seen as a bundle over the first two factors $B\times V$, we identify
$X^{(k)}_l$ with $J^k_l(B\times V,V)$ and denote $\Rel^k_l=J^k_l(B \times V, V ; \xi)$
seen as sitting in $X^{(k)}_l$. We have a natural
submersion $p^k_l:M^{(k)}_l \to X^{(k)}_l$ induced by the projection $M \to X$
which, thanks to the bijection between Legendrian sections and families of
contact transformations, satisfies $p^k_l(\RelLeg^k_l) = \Rel^k_l$. The
Legendrian graph construction also gives a map 
$\lambda^k_l:\Rel^{k+1}_l \to \RelLeg^k_l$ sending the $(k+1)$-jet of a family $f$ at some
point to the $k$-jet of $\Lambda_f$ at this point (observe that $\Lambda_f$ involves $df$,
hence the shift from $k+1$ to $k$).
Assume for a moment that $\RelLeg^k_l$ is a submanifold of $M^{(k)}_l$, a fact that
we will prove below. We know from Proposition \ref{prop:Rksubmanifold} that the projection
$\pi_l^k:\Rel^{k+1}_l \to \Rel^k_l$ is a submersion, hence the map
$p^k_l:\RelLeg^k_l \to \Rel^k_l$ is also a submersion since
$\pi_l^k = p^k_l\circ \lambda^k_l$. Therefore a family of contact diffeomorphisms has
its multijet extension transverse to $\Sigma$ if and only if its Legendrian graph 
has its multijet transverse to $(p^k_l)^{-1}(\Sigma)$.
What remains to be proved is that $\RelLeg^k_l$ is a submanifold and that the
set of Legendrian sections of $M$ whose multijet is transverse to a given
submanifold of $\RelLeg^k_l$ is a residual subset.

Let $s_0$ be a fixed Legendrian section of $M$ and pick a contact embedding
of a neighborhood of the zero-section of $J^1 (B \times V)$ to $M$
which maps the zero-section to $s_0$ over $\Id_{B \times V}$, as provided by
Weinstein's tubular neighborhood theorem. Although such a map cannot be
compatible with projections of both sides on $B \times V$ (fibers have different
contact geometry), the sections sufficiently close to $s_0$
can also be canonically viewed as sections of $J^1(B \times V)$. The key fact is then
that Legendrian sections of $J^1(B \times V)$ are exactly the holonomic ones. At the
level of multijets, this provides diffeomorphisms (near the multijet extension
$j^k_l s_0$ of $s_0$) from the multijet extensions $(J^1(B \times V))^{(k)}_l$ of 
$J^1(B \times V)$ to the multijet extensions $M^{(k)}_l$ of $M$, which maps
$J^{k+1}_l(B \times V)$ to $\RelLeg^k_l$. This shows that $\RelLeg^k_l$ is a
submanifold, since $J^{k+1}_l(B \times V)$ is a submanifold of 
$(J^1(B \times V))^{(k)}_l$. Moreover, the classical Thom-Mather theorem, applied at
order $k+1$, implies that the space of functions $B \times V \to V$ whose 
$(k+1, l)$-multijet is transverse to some submanifold of $J^{k+1}_l(B \times V)$ is a
residual subset.
Hence so is the space of Legendrian sections of $M$ whose $(k, l)$-multijet
extension is transverse to the corresponding submanifold of $\RelLeg^k_l$.
\end{proof}

It remains to prove Proposition \ref{prop:Rksubmanifold}.
For $p\in \N$, $n\in \N$ and $k\in \N$, we define $G^k_{p,2n+1}$
to be the set of $k$-jets at the origin of maps $f:\R^p \times \R^{2n+1} \to \R^{2n+1}$
such that $f(0,0)=0$ and $f_0=f(0,\cdot):\R^{2n+1} \to \R^{2n+1}$ is a local diffeomorphism.
This is a Lie group for the parameterwise composition. Note that $G^0_{p,2n+1}$ is the trivial group.
Moreover, we have projections $G^k_{p,2n+1} \to G^{k-1}_{p,2n+1}$ which are surjective Lie group homomorphisms,
and hence submersions.

Now adding the constraint that for all $b \in \R^p$, $f_b : \R^{2n+1} \to \R^{2n+1}$ is a
local \emph{contact} diffeomorphism (for the contact structure $\xi$) defines a subgroup
$H^k_{p,2n+1}$ of $G^k_{p,2n+1}$. The following lemma provides an explicit
description of $H^k_{p,2n+1}$, which implies in particular that it is a closed
subgroup\footnote{This fact is not obvious because a sequence $f_n$ of families
of contact transformations whose $k$-jet converges at a point does not
necessarily converges near that point.}, hence a submanifold by Cartan's closed
subgroup theorem.

\begin{lemma}\label{lem:description_Hk}
For $i\geq 0$, let $E_i$ be the bundle $\Lambda^i(\R^{2n+1})$ pulled back
by the projection $\R^p \times \R^{2n+1} \to \R^{2n+1}$. Each map
$f \co \R^p \times \R^{2n+1} \to \R^{2n+1}$ determines sections $\omega_2(f)$ and
$\omega_3(f)$ respectively of $E_2$ and $E_3$ by the formulas
\begin{itemize}
  \item $\omega_2(f)_{(b, v)} = (f_b^* \alpha \wedge \alpha)_v$,
  \item $\omega_3(f)_{(b, v)} = (f_b^*d\alpha \wedge \alpha - f_b^*\alpha \wedge d\alpha)_v$,
\end{itemize}
where $f_b=f(b,.): \R^{2n+1} \to \R^{2n+1}$.
The subgroup $H^k_{p,2n+1}$ consists of the $k$-jets at the origin of maps
$f$ such that $f(0,0)=0$ and the $(k-1)$-jets at the origin of the corresponding
sections $\omega_2(f)$ and $\omega_3(f)$ vanish (observe that these depend only
on the $k$-jet of $f$ at the origin).
\end{lemma}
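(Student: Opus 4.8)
The plan is to characterize the $k$-jets of families of local contact diffeomorphisms by translating the contact condition $f_b^*\alpha = e^{g} \alpha$ into a pair of $\alpha$-independent polynomial identities in $f_b^*\alpha$, $f_b^*d\alpha$, $\alpha$ and $d\alpha$, and then to recognize that these identities are exactly the vanishing of the sections $\omega_2(f)$ and $\omega_3(f)$. The essential point is that a local diffeomorphism $f_b$ of $(\R^{2n+1},\xi=\ker\alpha)$ preserves $\xi$ if and only if $f_b^*\alpha$ is a (nowhere-vanishing, as $f_b$ is a local diffeo) multiple of $\alpha$, and the two displayed $2$- and $3$-forms encode this without naming the conformal factor: indeed $f_b^*\alpha \wedge \alpha = 0$ at $v$ exactly says $(f_b^*\alpha)_v$ is proportional to $\alpha_v$, and then differentiating (or rather using $d\alpha$ as a ``reference'' top-ish form) the combination $f_b^*d\alpha\wedge\alpha - f_b^*\alpha\wedge d\alpha$ vanishes precisely when the proportionality factor is compatible with $d$, i.e. when the resulting form is closed-up-to-$\alpha$ in the way a genuine pullback must be. First I would write $f_b^*\alpha = \mu + h\alpha$ with $h$ a function and $\mu$ a $1$-form vanishing on $\ker\alpha$ would force $\mu = c\,\alpha$, so this is a clean reformulation; I would check $\omega_2(f)=0 \iff \mu = 0$ pointwise and then, granting $\omega_2(f)=0$ so that $f_b^*\alpha = h_b\alpha$, compute $\omega_3(f) = (h_b\,d\alpha + dh_b\wedge\alpha)\wedge\alpha - h_b\alpha\wedge d\alpha = dh_b\wedge\alpha\wedge\alpha - \text{(}h_b\text{-terms cancel)}$; a short computation shows this vanishes automatically, which tells me that $\omega_3$ must instead be the quantity whose vanishing promotes the pointwise proportionality $(f_b^*\alpha)_v \parallel \alpha_v$ (which $\omega_2=0$ already gives at every point, hence $f_b^*\alpha = h_b\alpha$ with $h_b$ a genuine function on all of $\R^{2n+1}$) — so in fact $\omega_2(f)=0$ on a neighborhood already forces $f_b$ contact, and the role of $\omega_3$ is to detect the contact condition at the level of jets, where one only knows a Taylor polynomial and cannot yet integrate. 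This is the subtlety flagged in the footnote.

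Concretely, the argument I would give is: ($\Leftarrow$) if the $(k-1)$-jets of $\omega_2(f)$ and $\omega_3(f)$ vanish at the origin, pick any representative $\tilde f$ of the $k$-jet; it need not be contact, but I claim one can correct it within its $k$-jet. Better: argue directly that for a genuine family $f$ of contact transformations, $\omega_2(f)\equiv 0$ and $\omega_3(f)\equiv 0$ identically (this is the easy ($\Rightarrow$) direction, just the computation above with $f_b^*\alpha = h_b\alpha$, using $f_b^*d\alpha = d(h_b\alpha) = dh_b\wedge\alpha + h_b\,d\alpha$ and expanding), hence all their jets vanish; so $H^k_{p,2n+1}$ is contained in the set $Z^k$ defined by vanishing of the $(k-1)$-jets of $\omega_2,\omega_3$. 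For the reverse inclusion $Z^k \subset H^k_{p,2n+1}$, I would run an induction on $k$: the statement is vacuous/clear for small $k$, and given a $k$-jet in $Z^k$ whose truncation to order $k-1$ lies in $H^{k-1}$, hence is realized by an actual contact family $f^{(k-1)}$, I would show that the order-$k$ part that keeps $\omega_2,\omega_3$ vanishing to order $k-1$ is exactly the tangent data of a genuine contact perturbation, using that the linearization of the contact condition at a contact transformation is a surjective linear differential operator (Gray-type / the fact, used implicitly elsewhere in the paper via contact Hamiltonians, that infinitesimal contact transformations are parametrized by functions $V\to\R$, i.e. sections of $TV/\xi$). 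Equivalently and more cleanly, I would invoke the Legendrian-graph picture already set up in the proof above: the contact condition on $f$ is the holonomy condition on its Legendrian graph $\Lambda_f$ in $M = T^*B\times V\times V\times\R$, and $\omega_2(f)$, $\omega_3(f)$ are, up to the identification of Legendrian sections with holonomic sections of $J^1(B\times V)$ near a reference section, precisely the obstruction to a section being holonomic expressed in jet coordinates; but since at this point in the paper that identification is being proved, I would keep the argument self-contained and do the linearization-plus-induction.

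The main obstacle, as the footnote says, is exactly the reverse inclusion: a $k$-jet of a family whose associated $\omega_2,\omega_3$ vanish to order $k-1$ need \emph{not} extend to any nearby genuine family of contact transformations on a whole neighborhood, so one cannot simply ``integrate'' — the content of the lemma is that nevertheless such a $k$-jet \emph{is} the $k$-jet of \emph{some} family of contact transformations (realized on a possibly tiny neighborhood), and the two universal forms $\omega_2,\omega_3$ capture all the constraints and no more. I expect the crux to be showing that the polynomial equations ``$(k-1)$-jet of $\omega_2$ and $\omega_3$ vanish'' cut out a submanifold of $G^k_{p,2n+1}$ of exactly the codimension matched by the number of free Taylor coefficients of a contact family — i.e. that these equations are independent in the appropriate sense — and this is where the surjectivity of the linearized contact operator (order by order, realizing each admissible jet increment by a contact Hamiltonian Taylor-truncated to the right order) does the work. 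Once $H^k_{p,2n+1}$ is exhibited as this common zero set of the components of a real-analytic (indeed polynomial) map, it is automatically closed, and Cartan's closed-subgroup theorem finishes the identification as a Lie subgroup; the submersivity of $G^{k+1}\to G^k$ restricts to submersivity $H^{k+1}\to H^k$ by counting the jet increments, which is what Proposition~\ref{prop:Rksubmanifold} records.
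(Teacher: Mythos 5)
Your easy direction is fine, and your diagnosis of the subtlety is accurate: $\omega_2(f)\equiv 0$ on a neighbourhood already forces each $f_b$ to be contact (and then $\omega_3(f)\equiv 0$ automatically), so $\omega_3$ only matters at the level of jets. But the hard inclusion --- every $k$-jet whose $\omega_2,\omega_3$ vanish to order $k-1$ is the $k$-jet of a genuine family of contact transformations --- is where the whole content of \cref{lem:description_Hk} lies, and your plan does not supply it. Your induction step needs the following: a degree-$k$ increment compatible with the vanishing of the $(k-1)$-jets of $\omega_2$ and $\omega_3$ must be absorbable by an honest contact perturbation, i.e.\ the constraints must force agreement with a contact family to order $k$, which is \emph{one order better} than they give at first sight. ``Surjectivity of the linearized contact operator'' (Hamiltonians $\to$ contact vector fields) does not address this: the issue is not whether contact deformations are plentiful, but whether the jet equations admit extra solutions, and the naive estimate indeed loses an order. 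For instance, in the linearized model, if a vector field $Y$ satisfies $(\Lie_Y\alpha)\wedge\alpha=o(|v|^{k-1})$ and one compares it with the contact field $X_g$ of Hamiltonian $g=\alpha(Y)$, then $(\Lie_Y\alpha)\wedge\alpha=\big((Y-X_g)\intprod d\alpha\big)\wedge\alpha$ only yields $Y-X_g=o(|v|^{k-1})$, not $o(|v|^k)$. Recovering that missing order is exactly the role of the second form $\omega_3$ together with a homotopy-operator argument; you flag this ``codimension count'' as the crux but only say you expect it to hold, which is precisely the point the footnote warns about.

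For comparison, the paper does not induct on $k$: given any representative $f$ of the jet, it runs a parametric Moser/Gray path method along $\alpha_t=(1-t)\alpha+t f_b^*\alpha$ and shows the correcting isotopy can be chosen of the form $(b,v)\mapsto(b,v+o(|b,v|^k))$, so that composing with it makes $f$ contact without changing its $k$-jet. The quantitative inputs are exactly the two jet conditions ($\dot\alpha_t\wedge\alpha_t=o(|b,v|^{k-1})$ from $\omega_2$, and $\dot\alpha_t\wedge d\alpha_t-\alpha_t\wedge d\dot\alpha_t=o(|b,v|^{k-1})$ from $\omega_3$), and the gain of one order comes from the radial Liouville homotopy operator $H$ in Darboux coordinates, which satisfies $H\big(o(|b,v|^j)\big)=o(|b,v|^{j+1})$; the Hamiltonian of the correcting field is $-H\gamma_t$ with $\gamma_t=\dot\alpha_t-\dot\alpha_t(R_t)\alpha_t$, and both conditions are needed to make the $\xi$-component of the correcting field $o(|b,v|^k)$ as well. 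If you insist on your order-by-order induction, the missing step is a jet-level analogue of exactly this estimate, so it is not a routine independence/dimension count but the same homotopy-operator computation in disguise; as written, your proposal has a genuine gap there.
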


\begin{proof}
If $f:\R^p\times \R^{2n+1} \to \R^{2n+1}$ is a family of local contact
diffeomorphisms then $\omega_2(f)$ and $\omega_3(f)$ vanish identically.

Conversely if the $k$-jet of $f$ is such that $\omega_2(f)$
and $\omega_3(f)$ vanish at order $k-1$, we will prove that we can turn $f$
into a family of local contact diffeomorphisms without changing its $k$-jet
at the origin. For this we follow the path method.

In the following, the differential forms always depend on the parameter $b$,
i.e. are seen as sections of $E^i$, though we often drop the subscript $b$
for notational convenience. We set $\alpha_1=f_b^*\alpha$, $\alpha_t = (1 - t)\alpha + t\alpha_1$
and $\xi_t = \ker \alpha_t$. Note that $\xi_t$ is a contact structure near
the origin for all $t$ and all $b$. Indeed, \emph{at the origin}, we have
$\alpha_1 \wedge \alpha = 0$, hence $\ker \alpha_t = \xi$, and then $d(\alpha_1 \wedge \alpha) = 0$ implies that 
$d\alpha_t\rst{\xi}$ is a multiple of $d\alpha\rst{\xi}$, hence symplectic.
The vanishing condition on $\omega_2(f)$ and $\omega_3(f)$ imply, after
differentiating with respect to $t$,
\begin{align}
&\dot{\alpha_t} \wedge \alpha_t =o(|b,v|^{k-1}) \label{eq:1}\\
&\dot{\alpha_t}\wedge \d \alpha_t - \alpha_t \wedge \d \dot{\alpha_t} =o(|b,v|)^{k-1}. \label{eq:2}
\end{align}

We will construct a local isotopy $\Phi_t$ (fibered over $\Id_{\mathbb{R}^p}$)
such that, denoting by $\phi_t$ the restriction of $\Phi_t$ to some unspecified slice
$\{b\} \times \mathbb{R}^{2n+1}$, $\phi_t^*\xi_t = \xi_0$. It will be generated by a vector field
$X_t$ that we decompose as $X_t = f_t R_t + Y_t$ where $R_t$ is the Reeb vector
field of $\alpha_t$ and $\alpha_t(Y_t) = 0$. The usual discussion of the path method in this
context (see e.g. \cite[Page~60]{Geiges_book}) ensures that $\phi_t$ will pull back
$\xi_t$ onto $\xi_0$ as soon as $(df_t + \dot \alpha_t + Y_t \intprod d\alpha_t) \wedge \alpha_t = 0$. This is
equivalent to $(Y_t \intprod d\alpha_t)\rst{\xi_t} = -(df_t + \dot \alpha_t)\rst{\xi_t}$ and, since
$d\alpha_t\rst{\xi_t}$ is non-degenerate, this uniquely defines $Y_t$. What is specific
to our situation is that we need to ensure that $f_t$ and $Y_t$ are both
$o(|b,v|^k)$, so that $\Phi_t(b,v) =(b, v + o(|b,v|^k)$. Per the above discussion,
the estimate on $Y_t$ is equivalent to $(df_t + \dot \alpha_t) \wedge \alpha_t = o(|b,v|^k)$.

We set $\gamma_t = \dot \alpha_t - \dot \alpha_t(R_t)\alpha_t$. Plugging $R_t$ into \eqref{eq:1}
gives $\gamma_t = o(|b,v|^{k-1})$. In addition, \eqref{eq:2} ensures that
$d\gamma_t \wedge \alpha_t = \gamma_t \wedge d\alpha_t + o(|b,v|^{k-1})$, hence $d\gamma_t \wedge \alpha_t = o(|b,v|^{k-1})$.
Recall the de Rham homotopy formula, for any $h \co M \times [0, 1] \to M$,
$h_1^* - h_0^* = H \circ d + d \circ H$ where $H\eta = \int_0^1 \partial_s \intprod h^*\eta$.

Darboux's theorem, with parameters, ensures the existence of coordinates,
smoothly varying in $t$ (and $b$), such that $\alpha_t = dz + \lambda$ where $\lambda$ is the radial
Liouville form on $\mathbb{R}^{2n}$: $\lambda = \Sigma(x_idy_i - y_idx_i)/2$. In these
coordinates, we will use the Liouville homotopy 
$h \co (x, y, z, s) \mapsto (sx, sy, z)$.  The corresponding operator $H$ on
differential form satisfies $Ho(|b,v|^j) = o(|b,v|^{j+1})$ for every $j$,
because $dh/ds = O(|v|)$. So we can set $f_t = -H\gamma_t$, and have 
$f_t = o(|b,v|^k)$. 
In addition, since $\gamma_t(\partial_z) = \gamma_t(R_t) = 0$, we have $h_0^*\gamma_t = 0$.
Hence the homotopy formula gives $df_t = -\gamma_t + Hd\gamma_t$. In the following
computation, we will use this, the observation 
$h^*\alpha_t = \alpha_t + (s^2 - 1)\lambda$, and its consequence $\partial_s \intprod h^*\alpha_t = 0$.
\begin{align*}
 (df_t + \dot \alpha_t) \wedge \alpha_t &= Hd\gamma_t \wedge \alpha_t \\
  &= \int (\partial_s \intprod h^*d\gamma_t) \wedge \alpha_t \\
  &= \int (\partial_s \intprod h^*d\gamma_t) \wedge (h^*\alpha_t - (s^2 - 1)\lambda) \\
  &= \int \partial_s \intprod h^*(d\gamma_t \wedge \alpha_t) + \int (1 - s^2)(\partial_s \intprod h^*d\gamma_t) \wedge \lambda.
\end{align*}
Since $d\gamma_t \wedge \alpha_t = o(|b,v|^{k-1})$, the first term is $o(|b,v|^k)$.
In the second term, we have $\gamma_t = o(|b,v|^{k-1})$, hence $d\gamma_t =
o(|b,v|^{k-2})$, and then $\partial_s \intprod h^*d\gamma_t = o(|b,v|^{k-1})$.
Since $\lambda = O(|v|)$, everything is $o(|b,v|^k)$ as needed.
\end{proof}

\begin{proof}[Proof of Proposition \ref{prop:Rksubmanifold}]
We use the same notations as in the proof of \cref{thm:thom-mather-contact}.
We want to prove that each $\Rel^k_l$ is a submanifold of
$X^{(k)}_l$ and $\Rel^{k+1}_l \to \Rel^k_l$ is a submersion.
It suffices to prove it for $l = 1$, since for $l\geq 2$, $\Rel^k_l$ is an
open set in the $l$-fold product of $\Rel^k$.

Let $p=\dim B$ and $2n+1=\dim V$. From Lemma \ref{lem:description_Hk}, we see
that $H^k_{p,2n+1}$ is a closed subgroup of $G^k_{p,2n+1}$, and hence a Lie
subgroup according to É. Cartan. The Lie group homomorphism
$H^k_{p,2n+1} \to H^{k-1}_{p,2n+1}$ is surjective and is thus
a submersion.

To complete the proof, we only need to find local trivializations
$X^{(k)} \simeq \R^p \times \R^{2n+1} \times \R^{2n+1} \times G^k_{p,2n+1}$
which maps $\Rel^k$ to $\R^p \times \R^{2n+1} \times \R^{2n+1} \times H^k_{p,2n+1}$
and commute with the projections $X^{(k)} \to X^{(k-1)}$ and
$\R^p \times \R^{2n+1} \times \R^{2n+1} \times H^k_{p,2n+1} \to
\R^p \times \R^{2n+1} \times \R^{2n+1} \times H^{k-1}_{p,2n+1}$.

For this it will be convenient to use the Heisenberg group structure
to choose, for each $y$ in $\mathbb{R}^{2n + 1}$, a contactomorphism depending smoothly
on $y$ and sending the origin to $y$. Indeed consider
$\lambda=1/2\sum (p_i\d q_i - q_i \d p_i)$, $\alpha=\d z + \lambda$ the
standard contact form on $\R^{2n+1}$, and $\xi=\ker \alpha$. To this we
associate the Heisenberg Lie group structure on $\R^{2n+1}$ where 
$(p_1, q_1, z_1)\cdot(p_2, q_2, z_2) = (p_1 + p_2, q_1 + q_2, z_1 + z_2 + d\lambda((p_1, q_1), (p_2, q_2))$. 
Then $\xi$ is invariant under right translation 
$R_y : x \mapsto x\cdot y$.

We now build the trivializations. Fix a point $(b,v,w) \in B\times V \times V$, 
pick local charts $\psi :(U_b,b) \simeq (\R^p,0)$, Darboux charts 
$\phi:(U_v,v) \simeq (\R^{2n+1},0)$ and $\theta:(U_w,w) \simeq (\R^{2n+1},0)$ (having
images of charts that are whole spaces is convenient, and easily arranged since
an open ball in standard contact space is isomorphic to the whole space). 
The k-jet at $(b',v') \in U_b \times U_v$ of a map $f:B\times V \to V$ with 
$f(b',v')=w'$ in $U_w$ is sent to $(\psi(b'),\phi(v'),\theta(w'), j^k g(0,0))$
where $g:\mathbb{R}^p \times \mathbb{R}^{2n+1} \to \mathbb{R}^{2n+1}$ is given by
\[
  g(x,y) = (R_{\theta(w')}^{-1} \circ \theta \circ f)
  \left(\psi^{-1}\left(x+\psi(b')\right), \phi^{-1}\circ R_{\phi(v')}(y)\right).
\]
Which is indeed a family of contact diffeomorphisms sending $(0, 0)$ to $0$.
This map to $\R^p \times \R^{2n+1} \times \R^{2n+1} \times H^k_{p,2n+1}$
is indeed a diffeomorphism: the inverse map sends
$(x, y, z, j^kg(0, 0))$ to $j^kf(b', v')$ where $b' = \psi^{-1}(x)$, $v' = \phi^{-1}(y)$,
and 
\[
  f(b'', v'') = (\theta^{-1} \circ R_z \circ g)\left(\psi(b'')- x, \phi(v'')\cdot y^{-1}\right). \qedhere
\]
\end{proof}

\section{Cleaning contact isotopies}
\label{sec:cleaning}

The proof of the decomposition theorem in \cref{sec:decomposition} will be
reduced to contact isotopies satisfying technical hypotheses. The goal of this
section is to prove that these hypotheses can be ensured by perturbation.

\begin{definition}\label{def:clean}
Let $(V,\xi)$ be a contact manifold, $L_-$ and $L_+$ be isotropic submanifolds of $V$.
A contact isotopy $f:I \times V\to V$ is $(L_-, L_+)$-clean if its restriction
$g:I \times L_-\to V$ satisfies :
\begin{enumerate}[label=(C-\arabic*), ref=(C-\arabic*)]
    \item\label{it:transLp}
      $g$ is transverse to $L_+$,
    \item\label{it:uneCollision}
      $\forall (t,t',x)\in I\times I\times L_-$, if $g(t,x) \in L_+$ and
      $g(t',x) \in L_+$ then $t = t'$,
    \item\label{it:immersion}
      $\forall (t,t',x) \in I\times I\times L_-$, if $g(t,x) \in L_+$, then $g$ is
      an immersion at $(t',x)$,
    \item\label{it:injection}
      $\forall (t,t',t'',x,x') \in I\times I\times I \times  L_-\times L_-$,
      if $g(t,x) \in L_+$ and $g(t',x')=g(t'',x)$ then $t'=t''$ and $x'=x$,
    \item\label{it:almostTransverseXi}
      $\forall (t,x)\in I\times L_-$, if $g(t,x) \in L_+$ then for any
      equation $\alpha$ of $\xi$ the function $\phi(s) = \alpha(\frac{\del g}{\del s}(s,x))$
      vanishes transversely, i.e. $\phi(s) = 0 \Rightarrow \phi'(s)\neq 0$.
  \end{enumerate}
If $L_-$ and $L_+$ are isotropic complexes (see definition \ref{def:isotropiccomplex})
rather than submanifolds, we say that $f$ is $(L_-,L_+)$-clean if $f$ is
$(L_-^{(i)} \setminus L_-^{(i-1)},L_+^{(j)}\setminus L_+^{(j-1)})$-clean
for all $i,j \in \{0,\dots,n\}$.
\end{definition}

Note that if $L_-$ is subcritical, that is $L_- = L_-^{(n-1)}$, the conditions above
simply reduce to $f_t(L_-)\cap L_+ = \emptyset$ for all $t$.

\begin{proposition}\label{prop:cleaning}
  Let $(V,\xi)$ be a contact manifold of dimension 
$2n+1$ with $2n+1 \geq 5$, and $L_-$, $L_+$ disjoint isotropic complexes.
The set of $(L_-, L_+)$-clean isotopies is residual in $\D_I(V, \xi)$.
\end{proposition}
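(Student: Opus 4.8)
The plan is to show that each of the five conditions \ref{it:transLp}--\ref{it:almostTransverseXi} (for a fixed pair of strata $L_-^{(i)}\setminus L_-^{(i-1)}$, $L_+^{(j)}\setminus L_+^{(j-1)}$) is a multijet transversality condition, to which \cref{thm:thom-mather-contact} applies, and then to conclude by countability of the collection of such conditions (\cref{rem:stratif}) plus the fact that a finite union of isotropic complexes has finitely many strata. So fix smooth strata $S_- \subset L_-$ of dimension $i$ and $S_+ \subset L_+$ of dimension $j$, and identify a contact isotopy $f \co I \times V \to V$ with a family of contact transformations parametrized by $B = I$ (after extending $I$ slightly to an open interval, which changes nothing since residuality is a local-in-parameter property and we may shrink back). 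The restriction $g = f|_{I \times S_-}$ is then the composition of the family $f$ with the fixed inclusion $S_- \hookrightarrow V$; each condition below is a transversality statement about $g$, hence about a multijet of $f$ evaluated along the submanifold $I \times S_- \subset I \times V$. Since $S_+$ is a submanifold of $V$ of codimension $2n+1-j \geq 2$ (it is isotropic) and $I \times S_-$ has dimension $1+i$, the relevant transversality conditions will turn out to be open-dense once we phrase them correctly.

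**The five conditions as transversality.** Condition \ref{it:transLp}: $g \pitchfork S_+$ is the statement that the $1$-jet section $(t,x) \mapsto j^1 g(t,x)$, restricted to $I \times S_-$, is transverse in $J^1_1(I\times V, V;\xi)$ to the (stratified) subset consisting of $1$-jets whose $0$-jet lands in $S_+$ and whose differential fails to be transverse to $T S_+$; this is a closed subset of positive codimension, which we stratify and to which \cref{thm:thom-mather-contact} applies at order $k=1$, $l=1$. Condition \ref{it:almostTransverseXi} is likewise a $1$-jet condition: for a point $(t,x)$ with $g(t,x)\in S_+$, the function $s \mapsto \alpha(\partial_s g(s,x))$ must vanish transversely; this asks that a certain order-$1$ quantity (a component of $j^1 g$ in the $t$-direction) together with its $t$-derivative not vanish simultaneously over $S_+$, again a stratified condition of the right codimension at order $k=2$, $l=1$ — note the derivative in $t$ forces us up to $2$-jets. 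Conditions \ref{it:uneCollision} and \ref{it:immersion} and \ref{it:injection} are multijet conditions: \ref{it:uneCollision} concerns pairs $(t,x),(t',x)$ with the \emph{same} second coordinate, i.e. $l=2$ multijets along the diagonal $\{x=x'\}$ of $(I\times S_-)^{(2)}$, requiring that one cannot have both images in $S_+$ unless $t=t'$; \ref{it:immersion} requires $g$ to be an immersion at $(t',x)$ whenever $g(t,x)\in S_+$, which is again an $l=2$ (same-$x$) condition on the $1$-jet; \ref{it:injection} is an $l=3$ multijet condition (two points $(t,x),(t',x')$ with equal image, plus a third point $(t'',x)$ landing in $S_+$ — or rather the appropriate combination forcing $t'=t''$, $x'=x$). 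In each case one writes down the ``bad set'' as a submanifold (or finite stratified union of submanifolds) $\Sigma$ of the appropriate $J^k_l(I\times V, V;\xi)$, computes its codimension, and checks it exceeds the dimension $l(1+i)$ of the source multijet space so that transversality means avoidance; \cref{thm:thom-mather-contact} then gives a residual set of $f$ satisfying it.

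**Assembling the pieces.** Having dealt with one stratum pair and one condition, intersect over the finitely many strata of $L_-$, the finitely many strata of $L_+$, and the five conditions: a finite intersection of residual sets is residual. This proves the proposition for isotropic \emph{submanifolds} and, by the stratum-wise definition at the end of \cref{def:clean}, for isotropic \emph{complexes}. One must also observe that the hypothesis $L_- \cap L_+ = \emptyset$ is used only to guarantee that the conditions are non-vacuous in the right way (in particular the subcritical remark $L_-=L_-^{(n-1)}$ where cleanness degenerates to $f_t(L_-)\cap L_+ = \emptyset$, which is an $l=1$ avoidance condition, codimension $2n+1-\dim L_- - \dim L_+ \geq 1$ after accounting for the $t$-parameter — here the hypothesis $2n+1\geq 5$ and the subcriticality give enough codimension).

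**Main obstacle.** The genuinely delicate point is \emph{not} the transversality machinery — that is handed to us by \cref{thm:thom-mather-contact} — but rather verifying that each ``bad set'' $\Sigma$ is actually a submanifold (or a locally finite stratified subset with strata of the claimed codimension) \emph{inside the constrained jet space} $J^k_l(I\times V,V;\xi)$, not merely inside the full jet space $J^k_l(I\times V,V)$. Because the contact constraint cuts out a proper submanifold for $k\geq 1$ (as noted after \cref{prop:Rksubmanifold}), one must check that the naive bad sets meet $J^k_l(\cdot;\xi)$ transversally, or equivalently recompute codimensions relative to the smaller space; the cleanest route is to use the fibered chart description from the proof of \cref{prop:Rksubmanifold} (trivializing $\Rel^k$ by the group $H^k_{p,2n+1}$) to see that the defining equations of each $\Sigma$ — which only involve the $0$-jet $f(t,x)$, the differential $df_{(t,x)}$ and, for \ref{it:almostTransverseXi}, one further $t$-derivative — restrict to submersions on $\Rel^k$ because those low-order data are unconstrained by the contact condition (the contact condition on a $k$-jet constrains $\omega_2,\omega_3$ to order $k-1$, which for $k=1,2$ leaves the relevant $0$- and $1$-jet components free). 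Once this is granted, everything else is the routine dimension-counting sketched above.
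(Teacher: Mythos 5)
Your overall strategy --- reduce to pairs of strata via \cref{rem:stratif}, encode each condition of \cref{def:clean} as a (multi)jet transversality statement in $J^k_l(I\times V,V;\xi)$, apply \cref{thm:thom-mather-contact}, and intersect finitely many residual sets --- is the same as the paper's. But your treatment of condition \ref{it:almostTransverseXi} has a genuine gap: you encode it as a single-point ($l=1$) condition at order $2$, asking that $\phi$ and $\phi'$ not vanish simultaneously over $L_+$. Condition \ref{it:almostTransverseXi} is intrinsically a \emph{two-point} condition in time: the collision $g(t,x)\in L_+$ occurs at some time $t$, while the transverse vanishing of $\phi(s)=\alpha(\partial_s g(s,x))$ is required at \emph{every} time $s$, including times where $g(s,x)\notin L_+$. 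With $l=1$ you only control those times $s$ at which the point itself lies on $L_+$, which is not what is needed downstream (in the decomposition proof one needs the whole arc $f([0,1]\times\{x_i\})$ to be transverse to $\xi$ away from finitely many points). The paper handles this with a two-point multijet: its $\Sigma_5\subset J^1_2$ records a collision at the first point $(t,x)$ and $\alpha(b')=0$ at a second point $(t',x)$ with the same $x$; since the codimension of $\Sigma_5$ equals $\dim\big((I\times V)^2\big)$, avoidance is not generic, and a short linear-algebra argument extracts $\partial_t\phi\neq 0$ from transversality to $\Sigma_5$. (Your avoidance mechanism could be salvaged at $l=2$, $k=2$, by forbidding $\phi=\phi'=0$ at the second point: the codimension is then $\codim L_-+\codim L_++\dim V+2=4n+5>4n+4$.) In any case $l=1$ does not suffice.

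Two further inaccuracies. You never carry out the codimension counts, and your yardstick ``codimension exceeds $l(1+i)$, the dimension of the source multijet space'' is correct only after reinterpretation: the source for \cref{thm:thom-mather-contact} is the $l$-fold product of $I\times V$, so the constraints $x\in L_-$, $x'=x$, etc.\ must be built into $\Sigma$ and the total codimension compared with $l\dim(I\times V)$; done this way the counts are those of the paper ($5n+4>4n+4$ for conditions \ref{it:uneCollision} and \ref{it:immersion}, $7n+5>6n+6$ for \ref{it:injection}). In particular the hypothesis $2n+1\geq 5$ is needed exactly for condition \ref{it:injection} (the $l=3$ count requires $n\geq 2$), not, as you suggest, for the subcritical case, where the single-point avoidance count works for all $n\geq 1$. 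Finally, in your ``main obstacle'' paragraph the justification is off: for $k\geq 1$ the contact constraint \emph{does} restrict the $1$-jet (the linearization must be a contact linear map); what keeps the codimensions unchanged inside $J^k_l(\cdot;\xi)$ is that the defining equations of the various $\Sigma$'s are submersive in directions that remain free there, namely the target position and the $t$-derivative $b'$, not that low-order data are unconstrained. Also note that for \ref{it:transLp} the paper sidesteps your corank-stratified non-transversality locus entirely by requiring $j^0f$ to be transverse to $I\times L_-\times L_+$, which is simpler and needs no avoidance count.
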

\begin{proof}
  For short, we will write $\D$ for the space $\D_I(V, \xi)$ of contact isotopies.
  Because of \cref{rem:stratif}, it is enough to prove the result
  in the case where $L_-$ and $L_+$ are submanifolds (of constant dimension).
  In all codimensions computations below, we
  will assume that $L_-$ and $L_+$ are of dimension $n$. If not then all
  codimensions would be higher, and the conclusion even stronger than needed.
  All the relevant submanifolds in jet spaces will be defined by independent
  conditions whose codimensions will thus add up to the submanifold codimension.
  A condition asking for some $x \in X$ to be in a submanifold $X'$ has
  codimension $\codim(X')$. A condition asking that $x = x'$ in $X$ means that
  $(x, x')$ is in the diagonal $\Delta_X \subset X \times X$ hence has codimension
  $\codim(\Delta_X) = \dim X$.

  \Cref{it:transLp} for $f \in \D$ is implied by (in fact equivalent to) the
  transversality of $j^0f$ to
  \[
    \Sigma_1 = I \times L_- \times L_+.
  \]
  Indeed, assume $j^0f$ is transverse to $\Sigma_1$. Let $\pi$ be the projection
  of $TJ^0(I \times V, V)$ onto the normal bundle $\nu\Sigma_1 = \{0\} \times \nu L_- \times \nu L_+$.
  At any $(t, x)$ such that $j^0f(t, x)$ is in $\Sigma$,
  $\pi \circ T_{(t, x)}f$ is surjective. In particular it is surjective onto
  $\{0\} \times \{0\} \times \nu L_+$. Hence $T_{(t, x)}f\rst{T_tI \times T_x L_-}$ is surjective
  onto $\nu L_+$.

  \Cref{it:uneCollision} is equivalent to asking that $j^0_2f$ avoids:
  \[\begin{split}
      \Sigma_2 = \{ & ((t,x,y),(t',x',y')) \in (I \times V \times V)^2, \\
                    & x \in L_-,  y \in L_+, x' = x, y' \in L_+ \}.
  \end{split}\]
  The codimension of $\Sigma_2$ in $J^0_2(I \times V, V)$ is
  $\codim(L_-) + \codim(L_+) + \dim(V) + \codim(L_+) = 5n + 4$.
  For $n\geq 1$, $5n+4 > 4n + 4 = \dim((I \times V)^2)$, so $j_2^0 f$ avoiding
  $\Sigma_2$ is equivalent to $j^0_2f \pitchfork \Sigma_2$.

  In order to discuss conditions involving $j^1f$, we will identify
  $\Hom(T_tI, T_yV)$ with $T_yV$ by $\varphi \mapsto \varphi(\partial_t)$, so that $T_{(t, x)}f$ will be
  represented by some $(A, b)$ in $\Hom(T_xV, T_{f(t, x)}V) \times T_{f(t, x)}V$.

  Observe that $g$ being an immersion at $(t',x)$ is equivalent to
  $T_{t', x}g(\partial_t) \not \in T_{g(t, x)} g_t(L_-)$. Hence condition \ref{it:immersion}
  is equivalent to asking that $j^1_2f$ avoids
  \[\begin{split}
      \Sigma_3 = \{ & ((t,x,y,A,b),(t',x',y',A',b')), \\
                    & x \in L_-, y \in L_+, x' = x, b'\in A(T_{x'} L_-)\}.
  \end{split}\]
  This $\Sigma_3$ is a submanifold of $J^1_2(I \times V, V ; \xi)$ whose codimension is
  $\codim(L_-) + \codim(L_+) + \dim(V) + \codim(L_-) = 5n+4$.
  For $n\geq 1$, $5n+4 > 4n+4 = \dim((I \times V)^2)$, so $j^1_2f$ avoiding $\Sigma_3$ is
  equivalent to $j^1_2f \pitchfork \Sigma_3$.

  \Cref{it:injection} is equivalent to $j^0_3f$ avoiding
  \[\begin{split}
      \Sigma_4 = \{ & ((t,x,y),(t',x',y'),(t'',x'',y'')), \\
                     & x \in L_-, y \in L_+, x' = x, x'' \in L_-, y' = y''\}.
  \end{split}\]
  This $\Sigma_4$ is a submanifold of $J_3^0(I \times V, V)$ of codimension
  $\codim(L_-) + \codim(L_+) + \dim(V) + \codim(L_-) + \dim(V) = 7n + 5$.
  For $n \geq 2$, $7n+5 > 6n+6 = \dim((V \times I)^3)$ so $j^0_3f$ avoiding $\Sigma_4$ is
  equivalent to $j^0_3f \pitchfork \Sigma_4$.

  \Cref{it:almostTransverseXi} asks that if $g(t,x) \in L_+$ for some
  $(t, x)$ then the function $t \mapsto \alpha(T_{(t, x)}g(\partial_t))$ vanishes
  transversely.
  Consider the following submanifold of $J^1_2(I \times V, V ; \xi)$:
  \[\begin{split}
      \Sigma_5 = \{& ((t,x,y,A,b),(t',x',y',A',b')), \\
                   & x\in L_-, y \in L_+, x' = x, \alpha(b') = 0 \}.
  \end{split}\]
  We claim that transversality of $j^1_2f$ to $\Sigma_5$ implies condition
  \ref{it:almostTransverseXi} (it is actually equivalent
  under condition \ref{it:transLp} but we will not need this equivalence).
  Assume $j^1_2f$ is transverse to $\Sigma_5$ for some $f$ in $\D$. Let
  $((t,x),(t',x))$ be a point in $(I \times V)^2 \setminus \Delta$  sent to $\Sigma_5$
  by $j^1_2f$. The normal space of $\{x = x', x \in L_-\}$ in $V \times V$ at $(x, x)$ is
  isomorphic to $\nu_xL_- \times T_xV$ via $[(u, v)] \mapsto ([u], u - v)$. Also, choosing a
  contact form $\alpha$ allows to identify $TV/\xi$ with $V \times \mathbb{R}$, and we set
  $\phi_f(t, x) = \alpha(T_{(t, x)}f(\partial_t))$. Using these identifications and
  notations, transversality of $j^1_2f$ at $p$ becomes surjectivity of the map
    \begin{gather*}
    T_tI \times T_xV \times T_{t'}I \times T_xV \to \nu_xL_- \times \nu_yL_+ \times T_xV  \times T_0\mathbb{R}, \\
    (\tau, u, \tau', u') \mapsto
    \Big([u],\; [T_{(t, x)} f(\tau, u)],\; u - u',\; T_{(t', x')}\phi_f(\tau', u')\Big).
  \end{gather*}
  In particular, this map is surjective onto $\{0\} \times  \nu_yL_+ \times \{0\} \times \{0\}$,
  so $T_{(t, x)}f$ induces an isomorphism from $T_tI \times T_xL_-$ to
  $\nu_{f(t, x)}L_+$. Next we use surjectivity onto $\{0\} \times \{0\} \times \{0\} \times T_0\mathbb{R}$
  to get existence of $(\tau, u, \tau', u')$ such that
  $u$ is in $T_xL_-$, $T_{(t, x)} f(\tau, u)$ is in $T_{f(t, x)}L_+$, $u' = u$, and
  $T_{(t', x')}\phi_f(\tau', u')$ is not zero. The first two conditions imply $u = 0$
  by the isomorphism above. Hence $u' = 0$ by the third condition and the last
  condition becomes $\partial_t\phi_f(t', x') \neq 0$ as desired.

  Transversality to $\Sigma_i$ for all $i\in \{1, \dots, 5\}$ determines
  a residual subset of $\D$ by the transversality theorem,
  \cref{thm:thom-mather-contact}, and we have seen that it implies cleanness.
\end{proof}

\section{The decomposition theorem}
\label{sec:decomposition}

The goal of this section is to prove the next result, which implies
\cref{thm:decomp_intro} from the introduction, and is our main geometrical
ingredient.

\begin{theorem}\label{thm:decomposition}
  Let $(V, \xi)$ be a contact manifold and $L_-$, $L_+$ be two disjoint
  compact isotropic complexes such that $L_-$ is loose in the complement of $L_+$.
  Any contact isotopy is homotopic, with fixed end-points, to a composition
  $f_t = g_t \circ f^-_t \circ f^+_t \circ g'_t$, where $g_t$ and $g'_t$ have support in
  Darboux balls $B$ and $B'$, and $f^\pm$ has compact
  support away from $\varphi_1(L_\pm)$ for some contact isotopy $\varphi$.
\end{theorem}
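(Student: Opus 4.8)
The plan is to reduce $f$, by fragmentation followed by a collapse of the resulting product, to the stated four-factor form, with Murphy's $h$-principle supplying the room that a naive dimension count denies. First I would replace $f$ by a $(L_-,L_+)$-clean isotopy. By \cref{prop:cleaning} the clean isotopies are residual, so $f$ has $C^\infty$-small clean perturbations $\tilde f$; the discrepancy $\tilde f_1 f_1^{-1}$ is $C^\infty$-close to $\Id$, and I would fragment it and absorb the resulting pieces into the four factors produced below, the homotopy $f\simeq\tilde f$ with fixed end-points coming from the two isotopies being $C^\infty$-close. So assume $f$ clean. Cleanness then forces, for each pair of strata of $L_-$ and $L_+$, the set of $(t,x)$ with $x$ in the $L_-$-stratum and $f_t(x)$ in the $L_+$-stratum — and the symmetric set for $f_t(L_+)$ — to be finite, at pairwise distinct times, each point sitting in an arbitrarily small Darboux ball, which is exactly the situation of \cref{lem:frag}.

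Now I would fragment $f$ with respect to a cover of $V$ by the complements of two disjoint closed neighborhoods of $L_-$ and of $L_+$, writing $f$ as an alternating product of isotopies supported away from $L_-$ and away from $L_+$. To collapse this product into the two main factors $f^-$ and $f^+$ one must, after conjugating the pieces past one another, arrange that they remain supported away from a single relocated copy $\varphi_1(L_-)$, respectively $\varphi_1(L_+)$; the obstruction is that the track swept by $f_t(L_-)$ has dimension at most $n+1$ in $V$, so no small perturbation can pull the $n$-dimensional complex $L_-$ clear of it.

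This is where looseness enters. After first arranging, by fragmenting the offending part of $f$ into a Darboux-ball factor, that $f_t$ is independent of $t$ near a loose chart of $L_-\subset V\setminus L_+$ (using \cref{rem:stabilization}), I would assemble formal Legendrian data along the family $(f_t)$ and feed it to the parametric, relative form of Murphy's theorem, \cref{thm:murphy}, to obtain a genuine contact isotopy $\varphi$ relocating $L_-$ (and with it $L_+$) to a position compatible with the collapse. Tubing together the small balls around the finitely many interaction points as in the proof of \cref{lem:frag}, one strips off from $f$ two isotopies $g_t$, $g'_t$ supported in Darboux balls $B$, $B'$, after which the remainder splits as $f^-_t\circ f^+_t$ with $f^-$ compactly supported away from $\varphi_1(L_-)$ and $f^+$ away from $\varphi_1(L_+)$; the identity $f_t=g_t\circ f^-_t\circ f^+_t\circ g'_t$ holds up to homotopy with fixed end-points, which is all the statement asks, via time reparametrisation and the conjugate and commutator homotopies of \cref{lem:conj_homotopy}.

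I expect the real obstacle to be this middle step: building formal Legendrian data from a clean isotopy that is compatible with the parametric and relative hypotheses of \cref{thm:murphy} and with the stratification $\emptyset=L^{(-1)}\subset\cdots\subset L^{(n)}=L$ of each complex, and then checking that the genuine isotopy Murphy returns really does clear $L_-$ and $L_+$ globally in the sense the collapse requires, not merely after one further perturbation. A secondary, purely organisational difficulty is to keep every intermediate identity valid only up to homotopy with fixed end-points, so that no information about $f_1$ is lost.
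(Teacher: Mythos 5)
Your outline correctly locates where looseness must intervene (the finitely many crossings of the track $t\mapsto f_t(L_-)$ with $L_+$, which no small perturbation removes), but the core mechanism is missing, and partly mis-stated. Murphy's theorem (\cref{thm:murphy}) produces homotopies of \emph{formal Legendrian embeddings} to genuine ones; it does not output ``a genuine contact isotopy $\varphi$ relocating $L_-$'', and relocating $L_-$ by a single contactomorphism cannot help anyway, since the obstruction lives in the whole one-parameter family $f_t|_{L_-}$, which meets $L_+$ at interior times $(t_i,x_i)$. What is actually needed (and what the paper does) is: isolate small disks $D_i\ni x_i$ and balls $C_i\supset f([0,1]\times D_i)$; use the loose chart of $L_-$ in $V\setminus L_+$ to pre-compose with an isotopy $g'$ supported in a Darboux ball $B'$ and post-compose with an isotopy supported in a Darboux ball $B$ so that $t\mapsto f'_t|_{D_i}$ becomes a family of neat Legendrian embeddings into $C_i$ with a loose chart that is \emph{fixed in $t$}; apply \cref{thm:murphy} parametrically and relatively (several times, after extending the $t$-interval so that the family is genuine on the boundary of the parameter domain) to deform this family, rel $\partial D_i$ and rel small $t$, until it avoids $L_+$; and then extend the resulting genuine Legendrian deformation to an ambient contact isotopy $l_t$ supported in $B$. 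Only after this correction does the track of $L_-$ under $g_t^{-1}\circ f_t\circ(g'_t)^{-1}$ miss $L_+$, so that the Hamiltonian cut-off \cref{lem:disjoint} splits the middle as $f^-_t\circ f^+_t$. Your alternative plan --- fragment $f$ into an alternating product supported away from $L_-$ and away from $L_+$ and then ``collapse by conjugation'' --- has no collapse mechanism: conjugating a piece transports its support by the conjugator, and arranging that all these transported supports avoid a single copy of $L_\pm$ is exactly the problem you started with. You flag this middle step yourself as an obstacle you do not know how to resolve; that is precisely the content of the theorem, so the proposal has a genuine gap rather than an alternative route.

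The reduction to the clean case is also not correct as written. A clean perturbation $\tilde f$ has $\tilde f_1\neq f_1$ in general, so there is no homotopy with fixed end-points between $f$ and $\tilde f$, and ``fragment the discrepancy and absorb'' gives no control on the number or supports of the resulting factors. The paper instead keeps an exact identity: choose $\tilde f$ so close that $\tilde f_t^{-1}\circ f_t(L_-)$ never meets $L_+$, split this discrepancy as $\delta^+_t\circ\delta^-_t$ by \cref{lem:disjoint}, decompose $\tilde f$, and absorb $\delta^\pm$ into the factors by conjugation, freezing the conjugators at time $1$ via \cref{lem:conj_homotopy}; the isotopy $\varphi=(\delta^-)^{-1}$ appearing in the statement comes from exactly this absorption (it preserves $L_-$), not from Murphy's theorem as in your sketch. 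Finally, \cref{lem:frag} is not the relevant tool for the interaction points: the Darboux ball containing the arcs $\gamma_i=f([0,1]\times\{x_i\})$ is obtained by an engulfing argument based on the $h$-principle for transverse arcs.
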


\begin{remark}
  Carefully reading the proof of the above \lcnamecref{thm:decomposition}
  reveals that $\varphi$ can be chosen arbitrarily small in $C^1$ topology, and
  with support in an arbitrarily small neighborhood of $L_- \cup L_+$. Since we have
  no use for this refinement, we neither include it in the statement, nor
  follow the size of $\varphi$ along various reductions.
\end{remark}

The discussion of \cref{thm:decomposition} will use the following technical
definition.

\begin{definition}
  Let $L_-$ and $L_+$ be two subsets of a closed contact manifold $(V, \xi)$.
  An $(L_-, L_+)$-decomposition for a contact isotopy $f$ is a factorization for
  all $t$
  \[
    f_t = g_t \circ f^-_t \circ f^+_t \circ g'_t
  \]
  where $g$ and $g'$ have compact support in Darboux balls $B$ and $B'$,
  and each $f^\pm$ has compact support outside of $L_\pm$.
\end{definition}

Using the above definition, the conclusion of \cref{thm:decomposition}
is that $f$ is homotopic to an isotopy admitting an
$(\varphi_1(L_-), \varphi_1(L_+))$-decomposition for some contact isotopy $\varphi$.

The proof of the decomposition \lcnamecref{thm:decomposition} splits into two
independent parts. First we explain in \cref{sec:reduction_decomp} that the
approximation result, \cref{prop:cleaning}, can be used to reduce to clean
isotopies, as introduced in \cref{def:clean}. Then the crucial part, where the
flexibility assumption appears, is \cref{prop:main_decomp} below, which 
will be proved in \cref{sub:proof_decomposition}.

\begin{proposition}\label{prop:main_decomp}
  Let $(V, \xi)$ be a contact manifold of dimension $2n+1$ with $2n+1 \geq 5$,
  and $L_-$, $L_+$ two disjoint compact isotropic complexes such that $L_-$
  is loose in $V\setminus L_+$. If a contact isotopy is $(L_-, L_+)$-clean
  then it has an $(L_-, L_+)$-decomposition.
\end{proposition}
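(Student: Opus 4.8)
\medskip
\noindent\textbf{Proposed approach to \cref{prop:main_decomp}.}
The plan is to reduce a clean isotopy to one whose restrictions to $L_-$ and to $L_+$ never drag one complex onto the other, at the cost of two isotopies supported in Darboux balls, and then to split the remainder by cutting off its contact Hamiltonian. Throughout I would use \cref{rem:stratif} to reduce to the case where $L_\pm$ are closed submanifolds of constant dimension, handling the pairs of strata one at a time: the only interesting case is the pair of top (Legendrian) strata, on which looseness lives, while for a subcritical stratum the cleanness conditions merely say the stratum stays off $L_+$ and the argument trivialises.

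\emph{Step 1: locate the crossings.} With $g:=f\rst{I\times L_-}$, condition \ref{it:transLp} together with the dimension count $\dim(I\times L_-)\le n+1\le\codim L_+$ makes the collision set $C:=g^{-1}(L_+)$ a finite set $\{(t_i,x_i)\}_{i\le N}$; conditions \ref{it:uneCollision} and \ref{it:injection} then force the $x_i\in L_-$ and the $y_i:=f_{t_i}(x_i)\in L_+$ to be pairwise distinct, and $f_t(x)\notin L_+$ for every $x\ne x_i$ and every $t$. Thus away from the $x_i$ the trajectory of $L_-$ already misses $L_+$. I would also note that (after the same kind of generic perturbation that \cref{prop:cleaning} provides) the crossings of $f_t(L_+)$ with $L_-$ form a finite set, occurring at points $w_j\in L_-$, so that near each $w_j$ a little Legendrian disk of $L_-$ is swept across by $f_t(L_+)$ transversely and finitely often.

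\emph{Step 2: the flexibility step.} I would enclose small neighbourhoods in $L_-$ of the $x_i$ in pairwise disjoint Darboux balls, join them into a single Darboux ball $B'$ through transverse arcs as in the proof of \cref{lem:frag}, and similarly enclose neighbourhoods in $L_-$ of the $w_j$ in a disjoint Darboux ball $B$. Outside $B\cup B'$ the trajectory of $L_-$ misses $L_+$ and that of $L_+$ misses $L_-$, so all the difficulty is what $f$ does to the little Legendrian disks $L_-\cap B'$ and $L_-\cap B$. Each such disk meets, for $t$ near the relevant crossing time, the Legendrian $L_+$ transversely once; this is a \emph{formally essential} intersection of the trajectory graph with the graph of $L_+$ in $I\times V$, so it cannot be removed by deforming $L_-$ — the point is instead to bring it into a standard local position. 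Here $\dim V\ge5$ and the hypothesis that $L_-$ is loose in $V\setminus L_+$ enter: each disk inherits a loose chart, and Murphy's parametric, relative $h$-principle (\cref{thm:murphy}), applied with the time variable as parameter and with the family prescribed near $\partial$ of the disk and at the ends of the short time window where it must agree with $f$, produces contact isotopies supported in $B'$ and in $B$. Composing these to $g,g'$, one arranges that $h_t:=g_t^{-1}\circ f_t\circ(g'_t)^{-1}$ satisfies, for some fixed neighbourhoods and all $t$, $h_t(\mathcal{N}(L_+))\cap\mathcal{N}(L_-)=\emptyset$ (and, with a little more care, also $h_t(\mathcal{N}(L_-))\cap\mathcal{N}(L_+)=\emptyset$). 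Conditions \ref{it:uneCollision}--\ref{it:almostTransverseXi} are exactly what guarantee that the finitely many local models are disjoint and nondegenerate, hence simultaneously resolvable inside the two balls.

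\emph{Step 3: fragment.} Once $h$ displaces a neighbourhood of $L_+$ off a neighbourhood of $L_-$ along its whole trajectory, splitting it is soft. Let $H_t$ be the contact Hamiltonian of $h$, choose $\chi_t\colon V\to[0,1]$ equal to $1$ on $\bigcup_s h_s(\mathcal{N}(L_+))$ and to $0$ on $\mathcal{N}(L_-)$ — possible by the displacement property and compactness — let $f^-$ be the contact isotopy generated by $\chi_tH_t$, which fixes $\mathcal{N}(L_-)$ and agrees with $h$ on $\mathcal{N}(L_+)$, and set $f^+:=(f^-)^{-1}\circ h$, which then fixes $\mathcal{N}(L_+)$. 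Then $f=g\circ f^-\circ f^+\circ g'$, with $g,g'$ supported in Darboux balls, $f^-$ of compact support away from $L_-$ and $f^+$ of compact support away from $L_+$: an $(L_-,L_+)$-decomposition. Everything here and in Step~1 is routine bookkeeping and a Hamiltonian cut-off; the genuinely delicate point — and the only place $\dim V\ge5$ and looseness are used — is Step~2, namely organising the $h$-principle so that the Legendrian disks can be guided \emph{within the Darboux balls, relative to the fixed boundary data,} into standard position with respect to $L_+$, simultaneously for all the crossings (which is why they are first isolated in disjoint balls and only then joined up), while checking that the relative and parametric hypotheses of \cref{thm:murphy} are met.
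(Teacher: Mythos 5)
Your overall skeleton (isolate finitely many crossings, use looseness of $L_-$ inside balls around them, then split the remainder by cutting off the contact Hamiltonian) is the paper's skeleton, and your Step~3 is essentially \cref{lem:disjoint}. But the middle of the argument has a genuine gap, and it is exactly at the place you yourself flag. You observe that each crossing of the track of $L_-$ with $L_+$ is a transverse, formally essential intersection point, hence ``cannot be removed by deforming $L_-$'' --- and this is correct \emph{under the relativity constraints you impose}: you propose to apply \cref{thm:murphy} on a short time window around each $t_i$, with the family prescribed at both ends of that window (and near the boundary of the disk). Under those constraints the mod~2 intersection of the track with $L_+$ is an invariant, so no such deformation, and hence no composition $g_t^{-1}\circ f_t\circ (g'_t)^{-1}$ built from it, can achieve the displacement $h_t(\mathcal{N}(L_\pm))\cap\mathcal{N}(L_\mp)=\emptyset$ for all $t$ that your Step~3 needs. ``Bringing the crossing into standard local position'' does not bridge this: you never explain how a normalized crossing yields the displacement property. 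The paper's removal mechanism is different and essential: the deformation of the family of Legendrian disk embeddings $k_t=f'_t|_{D_i}$ is relative to $\partial D_i$ and to $t$ near a small $\epsilon$, but \emph{not} relative to the final time $t=1$; the family is extended to $t\in[\epsilon,2]$ and reparametrized in time so that the collision is pushed off the image, and only then is Murphy's theorem applied (several times) to make the new family genuinely Legendrian. This is what makes the formal problem solvable, and it is why the decomposition has a nontrivial $g_1$ at time one. Two further ingredients you gloss over are also load-bearing in the paper: a small disk $D_i\subset L_-$ does not ``inherit'' a loose chart --- looseness gives a chart somewhere in $V\setminus L_+$ for the whole component, and it must be transported next to $D_i$ and then frozen (this is the role of the pre-composition $g'$ in a second Darboux ball $B'$ and of a cut-off isotopy supported in $B$), since \cref{thm:murphy} requires a \emph{fixed} loose chart for the whole family; and the resulting deformation of the disk family must be converted back into an ambient contact isotopy supported in $B$ by extending the deformation vector fields to contact fields.

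A secondary problem: your target displacement condition concerns the trajectory of $L_+$ under $h$ (equivalently $h_t^{-1}(L_-)$ meeting $L_+$), which is not controlled by $(L_-,L_+)$-cleanness; \cref{def:clean} only constrains $f|_{I\times L_-}$. You cannot invoke ``the same kind of generic perturbation as \cref{prop:cleaning}'' inside this proposition, because the statement must hold for \emph{every} clean isotopy and the perturbation has already been spent in the reduction of \cref{sec:reduction_decomp}; your extra crossings $w_j$ would require enlarging the definition of clean and redoing that reduction. This part is repairable: the condition $h_t(L_-)\cap L_+=\emptyset$ for all $t$ is all that \cref{lem:disjoint} needs to produce $h_t=f^-_t\circ f^+_t$ with the right supports, so only the motion of $L_-$ has to be controlled --- which is what cleanness, and the paper's proof, are about.
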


\subsection{Reduction to the clean case}\label{sec:reduction_decomp}

The goal of this section is to prove that \cref{thm:decomposition} follows from
\cref{prop:main_decomp}.
The first crucial \lcnamecref{lem:disjoint} is a simple consequence of how
contact Hamiltonian allow to cut-off contact isotopies.

\begin{lemma}\label{lem:disjoint}
  Let $K_-$ and $K_+$ be two compact subsets in a contact manifold $(V, \xi)$. Let
  $f$ be a contact isotopy of $(V, \xi)$. If, for all $t$, $f_t(K_-)$ is
  disjoint from $K_+$ then one can decompose $f_t$ as $f^-_t \circ f^+_t$ where each
  $f^\pm_t$ has support away from $K_\pm$. Alternatively, one can decompose
  $f_t$ as $f^+_t \circ f^-_t$, with the same support constraints.
\end{lemma}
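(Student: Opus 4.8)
The plan is to cut the contact Hamiltonian generating $f$ off along the ``trace'' of $K_-$. First I would fix a contact form $\alpha$ for $\xi$ and let $H\co I\times V\to\R$, $H_t:=H(t,\cdot)$, be the contact Hamiltonian generating $f$, so that $f_t$ is the flow of the contact vector field $X^H_t$ of $H_t$; recall that the value of this contact vector field at a point depends only on the $1$-jet of $H_t$ there, and that if $f$ has compact support then so does $H$. Then I would record two elementary facts about cut-offs. For any smooth $\chi\co I\times V\to[0,1]$, the time-dependent vector fields $X^{\chi H}_t$ integrate to a compactly supported contact isotopy $f^\chi$; moreover, (i) if $\chi\equiv 0$ on a neighbourhood of a set $C$, then $\chi_tH_t$ has vanishing $1$-jet near $C$ for all $t$, so $f^\chi_t$ is the identity on a neighbourhood of $C$ for every $t$; and (ii) if $\chi\equiv 1$ on a neighbourhood of the trace $\{(t,f_t(x)):t\in I\}$ of a single point $x$, then along that curve $\chi_tH_t$ and $H_t$ have the same $1$-jet, so $t\mapsto f_t(x)$ is also an integral curve of $X^{\chi H}_t$ issued from $x$, hence by uniqueness $f^\chi_t(x)=f_t(x)$ for all $t$.

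Next I would run the construction. Set $\Gamma_-:=\{(t,f_t(x)):t\in I,\ x\in K_-\}$, a compact subset of $I\times V$; the hypothesis ``$f_t(K_-)\cap K_+=\emptyset$ for all $t$'' is precisely the statement that $\Gamma_-$ is disjoint from the compact set $I\times K_+$. I would then pick $\chi\co I\times V\to[0,1]$ equal to $1$ near $\Gamma_-$ and to $0$ near $I\times K_+$, let $f^+_t$ be generated by $\chi_tH_t$, and set $f^-_t:=(f^+_t)^{-1}\circ f_t$. Both are compactly supported contact isotopies and $f_t=f^+_t\circ f^-_t$ tautologically. Using fact (i) together with compactness of $I$ (an open neighbourhood of $I\times K_+$ on which $\chi\equiv 0$ contains $I\times N_+$ for some neighbourhood $N_+$ of $K_+$), $f^+_t$ is the identity on $N_+$ for all $t$, so $\supp f^+_t\cap K_+=\emptyset$. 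Using fact (ii) and compactness (as $\Gamma_-$ lies in the open set $\{\chi=1\}$, a small enough neighbourhood $N_-$ of $K_-$ has the property that the trace of each of its points stays in $\{\chi=1\}$), we get $f^+_t=f_t$ on $N_-$ for all $t$; then $f^-_t=(f^+_t)^{-1}\circ f_t$ is the identity on $N_-$, so $\supp f^-_t\cap K_-=\emptyset$. This proves one of the two factorizations; for the other, $f_t=f^-_t\circ f^+_t$, I would run the symmetric construction, cutting the Hamiltonian off along the trace of $K_+$ rather than of $K_-$ and interchanging the two sides of the composition.

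I expect the only real subtlety to be fact (ii) — that pre-multiplying $H_t$ by a cut-off which is $\equiv 1$ near the trace of a point leaves that point's $f$-trajectory unchanged. It is nothing more than uniqueness for the flow, but it is exactly what forces the cut-off to be performed along $\Gamma_-$, a set directly controlled by the hypothesis, rather than along $I\times K_-$; that is the crux making the whole argument go through. The remaining points — that $X^{\chi H}_t$ integrates to a genuine compactly supported contact isotopy, and that the neighbourhoods $N_\pm$ above can be taken independent of $t$ — are routine, the latter following from compactness of $I$ and of the traces involved.
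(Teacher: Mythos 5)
Your main construction is the paper's own proof: the paper chooses a (time-independent) cut-off $\rho$ equal to $1$ near the compact set $f([0,1]\times K_-)$ — the image in $V$ of your trace $\Gamma_-$ — and to $0$ near $K_+$, lets $f^+$ be the contact isotopy generated by $\rho H_t$, and sets $f^-_t=(f^+_t)^{-1}\circ f_t$; your facts (i) and (ii) are exactly the two uniqueness-of-flow observations this relies on, and your time-dependent cut-off changes nothing. So the factorization $f_t=f^+_t\circ f^-_t$ (the ``alternative'' order in the statement) is correctly and completely established, by the same route as in the paper.

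The gap is your last sentence, concerning the order $f_t=f^-_t\circ f^+_t$. The hypothesis $f_t(K_-)\cap K_+=\emptyset$ is not symmetric in $K_-$ and $K_+$: your ``symmetric construction'' needs the trace $\{(t,f_t(x)):x\in K_+\}$ to avoid $I\times K_-$, i.e. $f_t(K_+)\cap K_-=\emptyset$ for all $t$, which is equivalent to $f_t^{-1}(K_-)\cap K_+=\emptyset$ and does not follow from the assumption. In fact, under the stated hypothesis that order can genuinely fail: in a Darboux chart let $f_t$ be (a compactly supported cut-off of) the Reeb translation $(x,y,z)\mapsto(x,y,z+t)$, and take $K_-=\{p\}$ with $p=(0,0,0)$, $K_+=\{q\}$ with $q=(0,0,-1/2)$. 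Then $f_t(p)\neq q$ for all $t\in[0,1]$, yet if $f_{1/2}=f^-_{1/2}\circ f^+_{1/2}$ with $f^+_{1/2}$ fixing $q$ and $f^-_{1/2}$ fixing $p$, then $f^-_{1/2}$ sends both $q$ and $p$ to $p=f_{1/2}(q)$, contradicting injectivity. So the order $f^-\circ f^+$ requires the mirror hypothesis $f_t(K_+)\cap K_-=\emptyset$, under which it is indeed given by your symmetric construction. To be fair, the paper's own proof is equally breezy here: it proposes $f^-_t=f_t\circ(f^+_t)^{-1}$ for the other order, but that diffeomorphism is supported away from $f_t(K_-)$, not away from $K_-$; in the paper's applications this is harmless, since either the order your construction produces is the one actually used, or the two middle factors are treated separately up to conjugation so their order is immaterial. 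As a proof of the statement as written, however, your final sentence does not go through, and you should at least record which disjointness hypothesis each order of composition requires.
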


\begin{proof}
  By assumption, $f([0, 1] \times K_-)$ and $K_+$ are disjoint compact subsets in $V$.
  Hence there exists a cut-off function $\rho$ with compact support which equals
	one on a neighborhood of $f([0, 1] \times K_-)$ and vanishes on a neighborhood of
  $K_+$.

  Let $X_t$ be the time-dependent vector field generating $f$ and let $H_t$ be
  the Hamiltonian function corresponding to $X_t$ (either using an auxiliary
  contact form or seeing $H_t$ as a section of $TV/\xi$).
  Let $Y_t$ be the time-dependent contact vector field corresponding to the
  Hamiltonian $\rho H_t$. Because $Y_t$ vanishes outside the support of $\rho$, its
  flow $f^+_t$ is defined for all time $t$ in $[0, 1]$ and $f^+_t$ is the identity
  on a neighborhood of $K_+$.

  In addition, $f^+_t = f_t$ on $K_-$. We set $f^-_t = (f^+_t)^{-1} \circ f_t$ or
  $f^-_t = f_t \circ (f^+_t)^{-1}$, depending on the desired decomposition order.
\end{proof}

\begin{proof}[Proof of \cref{thm:decomposition}]
  Let $f$ be any contact isotopy of $(V, \xi)$.
  Then \cref{prop:cleaning} gives a contact isotopy $\bar f$ which
  is an arbitrarily small perturbation of $f$ and is $(L_-, L_+)$-clean. 
  We choose it small enough to make sure that, for all $t$, 
  $\bar f_t^{-1} \circ f_t(L_-)$ is disjoint from $L_+$.
  Then \cref{lem:disjoint} constructs contact isotopies $\delta^-$ and $\delta^+$, with support
  disjoint from $L_-$ and $L_+$ respectively, such that, for all $t$,
  \[
    \bar f_t^{-1} \circ f_t = \delta^+_t \circ \delta^-_t.
  \]

  \Cref{prop:main_decomp} gives a $(L_-, L_+)$-decomposition of $\bar f$:
  $\bar f_t = \bar g_t \circ \bar f^-_t \circ \bar f^+_t \circ \bar g'_t$, where $g$, (resp.
  $g'$) has support in some Darboux ball $B$ (resp. $\bar B'$), and $\bar f^{\pm}$
  has support away from $L_\pm$.
  This can be rewritten as:
  \[
    f_t = \bar g_t \circ \underbrace{\bar f^-_t \circ \delta^-_t}_{=: f^-_t} \circ
    \underbrace{(\delta^-_t)^{-1} \circ \big(\bar f^+_t \circ \delta^+_t \big) \circ \delta^-_t}_{=\; \conj{(\delta^-_t)^{-1}}{\bar f^+_t \circ \delta^+_t}}
    \circ \underbrace{(\delta^+_t \circ \delta^-_t)^{-1} \circ \bar g'_t \circ (\delta^+_t \circ \delta^-_t)}_{=\; \conj{(\delta^+_t \circ \delta^-_t)^{-1}}{\bar g'_t}}.
  \]
  \Cref{lem:conj_homotopy} ensures this isotopy is homotopic to:
  \[
    t \mapsto \bar g_t \circ f^-_t \circ
    \underbrace{\conj{(\delta^-_1)^{-1}}{\bar f^+_t \circ \delta^+_t}}_{=: f^+_t} \circ
    \underbrace{\conj{(\delta^+_1\circ\delta^-_1)^{-1}}{\bar g'_t}}_{=: g'_t}
  \]
  where $f^+_t$ is relative to $(\delta^-_1)^{-1}(L_+)$, and $g'_t$ has support in 
  the Darboux ball $B' := (\delta^+_1\circ\delta^-_1)^{-1}\bar B'$, so we set $\varphi_t = (\delta^-_t)^{-1}$
  (note that $\varphi_t(L_-) = L_-$ for all $t$).
\end{proof}

\subsection{Decomposition of clean isotopies}
\label{sub:proof_decomposition}

In this section, we prove \cref{prop:main_decomp}. Remember clean isotopies were
introduced in \cref{def:clean}. Numbered conditions like \cref{it:transLp} in
the proof below refer to items in this definition.

The proof is organized into a sequence of steps. Each step uses the statements
and notations of previous steps but not their proofs, so step proofs can be
checked independently.

Let $f\co [0, 1] \times V \to V$ be an $(L_-,L_+)$-clean contact isotopy. 
The first step sets the stage without modifying $f$, essentially unpacking
consequences of the definition of clean isotopies, but also using an engulfing
argument relying on the $h$-principle for transverse arcs. 
The second step composes $f$ with Darboux ball-supported isotopies on both
sides to get $f'$ with convenient fixed loose charts, using Murphy's
flexibility theorem.
The third step uses these loose charts, and five invocations of Murphy's
theorem, to deform the isotropic isotopy $f'\rst{L_-}$ until there is no more
collision with $L_+$.
The fourth steps lifts this deformation to a deformation $f''$ of $f'$ by
post-composition with a Darboux ball-supported isotopy. 
The conclusion applies \cref{lem:disjoint} to $f''$.

In this proof, the word ball, without adjective, always mean a closed
codimension 0 ball in $V$ with smooth boundary. The word disk will always mean 
a closed codimension 0 ball in $L_-$ with smooth boundary. 

\begin{step}(See \cref{fig:step1})
\label{step:setup}
\begin{stepitems}
\item 
  There exists a finite collection of distinct points 
  $x_i \in L_-^{(n)}\setminus L_-^{(n-1)}$ and times $t_i \in (0, 1]$ such that
  $f^{-1}(L_+) \cap ([0, 1] \times L_-) = \{(t_i, x_i)\}$.

\item\label{it:emb} 
  There exists a collection of pairwise disjoint disks
  $D_i \subset L_-^{(n)}\setminus L_-^{(n-1)}$ centered at $x_i$, whose union
  is denoted by $D$, such that $f\co [0, 1] \times D \to V$ is an embedding,

\item\label{it:tub} There exists a collection of pairwise disjoint balls
  $C_i \subset V$, whose union is denoted by $C$, such that,
  $f^{-1}(C_i)\cap ([0, 1] \times L_-) = [0, 1] \times D_i$, and each
  $f_t:D_i\to C_i$ is a neat embedding for all $t$.

\item\label{it:tubdarboux} There is a Darboux ball $B$ containing $C$ in its
  interior.
\end{stepitems}
\end{step}
\begin{figure}[ht]
  \centering
  \includegraphics[width=5cm]{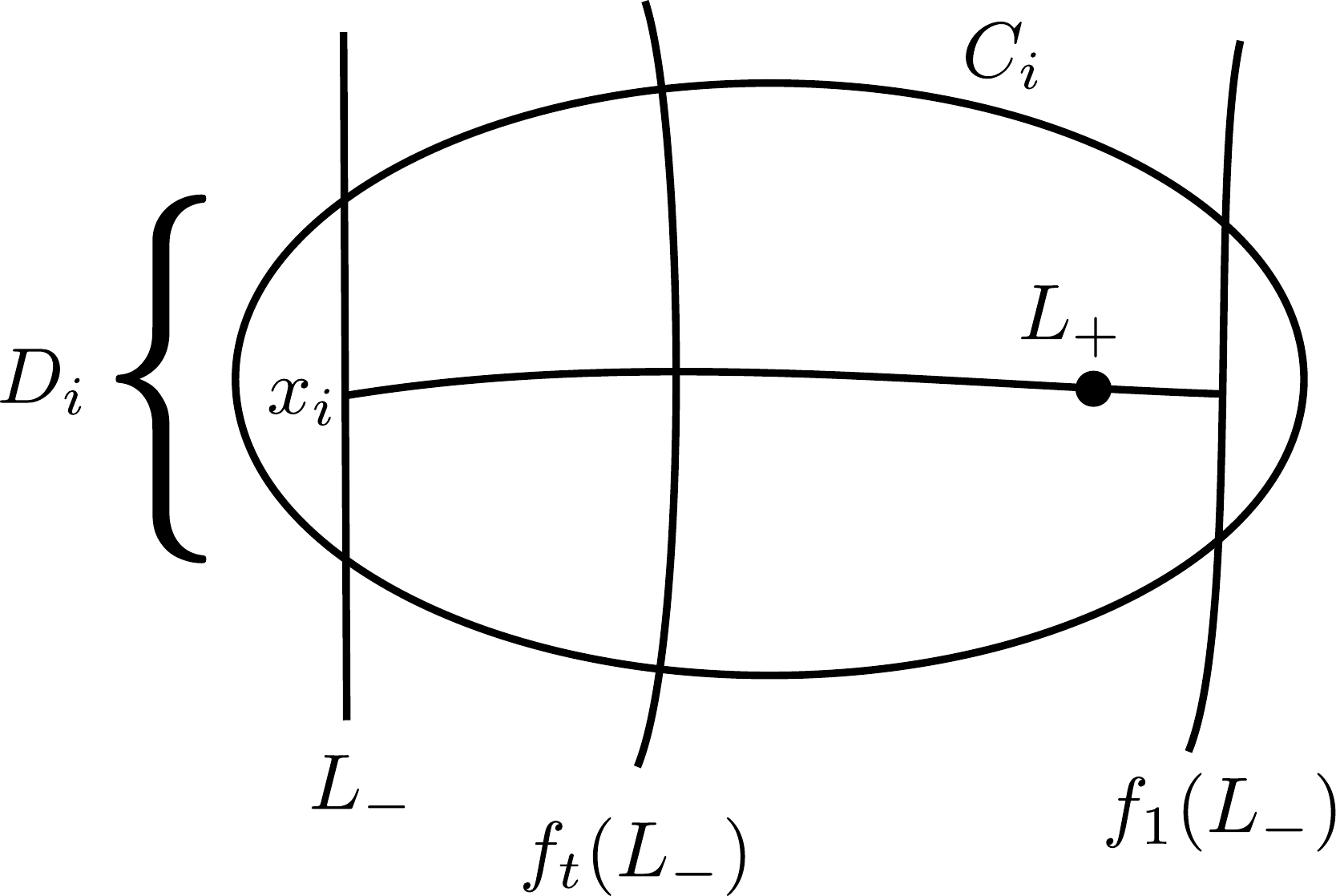}
  \caption{}
  \label{fig:step1}
\end{figure}
The set $X=f^{-1}(L_+)\cap ([0, 1] \times L_-)$ is closed in $[0, 1] \times L_-$
(since $L_+$ is closed) and hence compact (since $L_-$ is compact).
\Cref{it:transLp} in \cref{def:clean} implies that
$X=f^{-1}(L_+) \cap ([0, 1] \times (L_-^{(n)}\setminus L^{(n-1)}_-))$ and that
$X$ is discrete. Hence $X$ is a finite collection of points $(t_i, x_i)$.
\Cref{it:uneCollision} implies that the $x_i$ are pairwise distinct.
\Cref{it:injection} implies that the arcs $\gamma_i := f([0, 1] \times \{x_i\})$ are
pairwise disjoint and simple. \Cref{it:immersion} implies they are embedded.
\Cref{it:almostTransverseXi} implies that they are transverse to $\xi$ except
at finitely many points. We claim that any neighborhood $U$ of an arc $\gamma$
with this property contains a Darboux ball containing $\gamma$ in its interior. This
then allows us to construct a collection of pairwise disjoint Darboux balls
$B_i$ containing $\gamma_i$ in its interior and the required Darboux ball $B$
by a connect sum operation along transverse arcs joining north and south poles
of the balls $B_i$.
To prove the claim, pick an open ball $W$ containing $\gamma$ in its interior
and contained in $U$ and let $\{p_j\}$ be the set of points $x$ where either
$\gamma$ is not transverse to $\xi$ or $x$ is an end-point of $\gamma$. Pick
disjoint Darboux balls $A_j$ centered at the points $p_j$ and contained in $W$
and connect the balls $A_j$ along transverse arcs disjoint from $\gamma$
to get a single Darboux ball $A$ contained in $W$. Then $\gamma\cap(W\setminus A)$
consists of finitely many transverse arcs with boundary on $\partial A$, which can be pushed
inside of $A$ by a smooth isotopy since $\pi_1(W,A)=0$. By $h$-principle for transverse
arcs, there is a transverse isotopy of $\gamma$ in $W$ which takes $\gamma$ in
the interior of $A$. Lift this isotopy to a contact isotopy $\theta_t$
for $t \in [0,1]$ supported in $W$, and the ball $\theta_1^{-1}(A)$ contains the initial arc
$\gamma$ in its interior and is contained in $W$ and hence in $U$.

We now construct the disks $D_i$ and balls $C_i$.
Note first that $L_-^{(n)}\setminus L_-^{(n-1)}=L_-$ near $x_i$ according
to \cref{def:isotropiccomplex}. According to \cref{it:injection},
we have $f^{-1}(\gamma_i) \cap ([0, 1] \times L_-) = [0, 1] \times \{x_i\}$.
Moreover, $f$ immerses $[0, 1] \times L_-$ near $(t, x_i)$ for all $t$.
A compactness argument then shows that \ref{it:emb}
holds and $f^{-1}(f([0, 1] \times D_i)) = [0, 1] \times D_i$
as soon as the radius of $D_i$ is sufficiently small.
We may then extend $f([0, 1] \times D_i)$ to a ball $C_i$
such that $[0, 1] \times D_i \cap f^{-1}(\del C_i) = [0, 1] \times \partial D_i$
and $f([0, 1] \times D_i)$ is transverse to $\del C_i$.
\cref{it:tub} then holds as soon as $C_i$ is sufficiently thin.
Since $\gamma_i \subset \Int B$, we may assume that $C_i \subset \Int B$
in the above construction.

\begin{step}(See \cref{fig:step2_1,fig:step2_2})
  There exist a contact isotopy $g'$ supported in a Darboux ball $B'$,
  a contact isotopy $h$ supported in $B$, a collection of closed disks
$D''_i \subset \Int D_i \setminus \{x_i\}$ and $\epsilon \in (0, \min_i t_i)$
  such that the contact isotopy $f'_t := h_t^{-1} \circ f_t \circ (g'_t)^{-1}$ satisfies:
  \begin{stepitems}
    \item\label{it:same_coll}
      $(f')^{-1}(L_+) \cap ([0, 1] \times L_-) = \bigcup_i\{(t_i, x_i)\}$,
    \item\label{it:in_D_i''}
      $f_t=f'_t$ on $L_-\setminus D_i''$ for $t\geq \epsilon$ and
      $f'([\epsilon, 1] \times D_i'') \cap f'([\epsilon, 1] \times (L_-\setminus D_i''))=\emptyset$
    \item\label{it:same_disks}
      $f'([\epsilon, 1] \times D_i'')\subset \Int C_i$,
    \item\label{it:fixed_chart}
  for $t\geq \epsilon$, the diffeomorphisms $f'_t$ restrict to neat Legendrian
  embeddings $k_t:D_i \to C_i$ admitting a fixed loose chart
  $U_i \subset (\Int C_i \setminus (\gamma_i \cup L_+))$
  (fixed means that $k_t^{-1}(U_i)$ and $k_t(k_t^{-1}(U_i))$ are independent of $t$).
  \end{stepitems}
\end{step}

\begin{figure}[ht]
  \centering
  \begin{subfigure}[b]{5cm}
    \includegraphics[width=5cm]{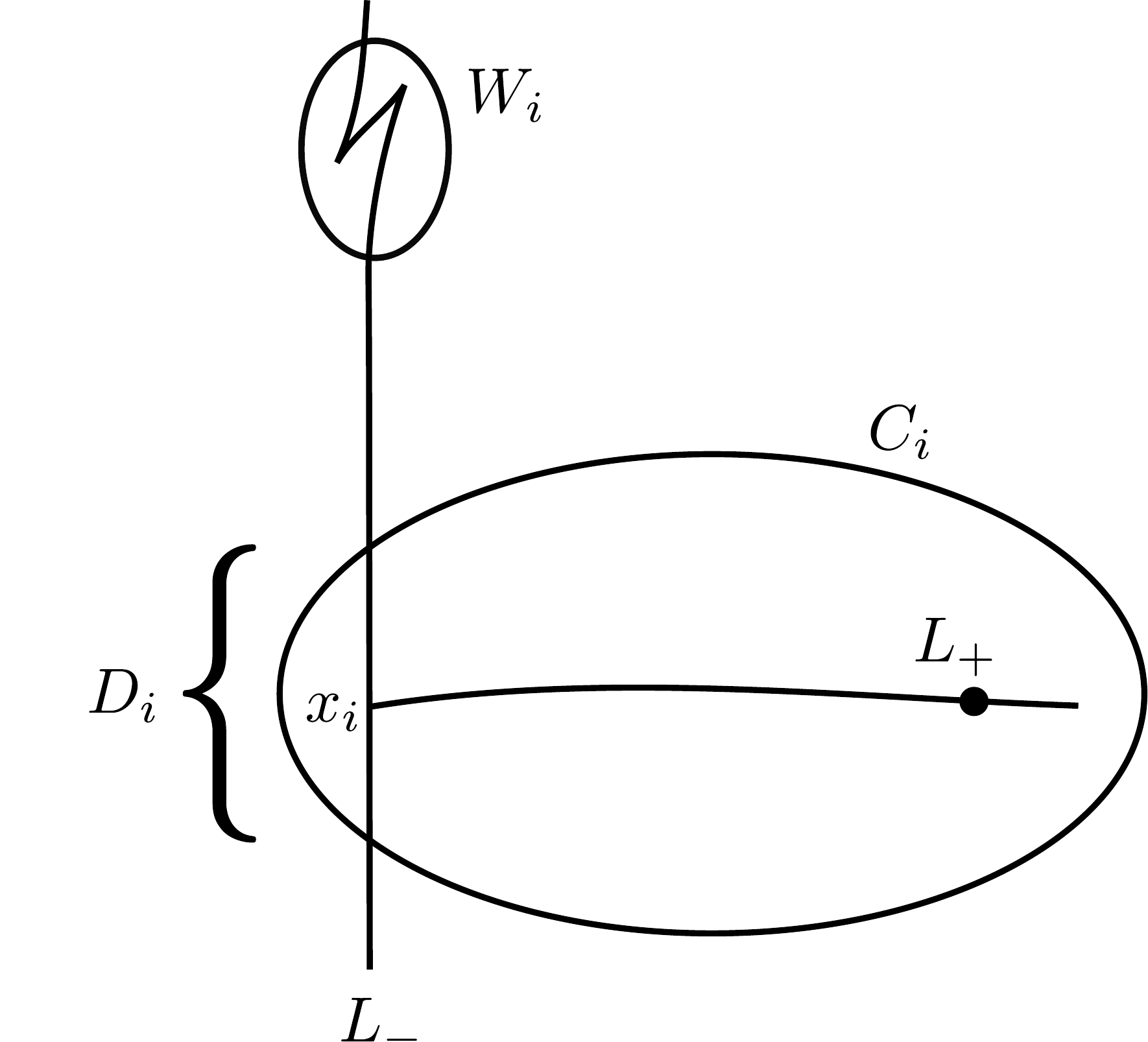}
    \caption{}
    \label{fig:step2_1_1}
  \end{subfigure}
  \begin{subfigure}[b]{5cm}
    \includegraphics[width=5cm]{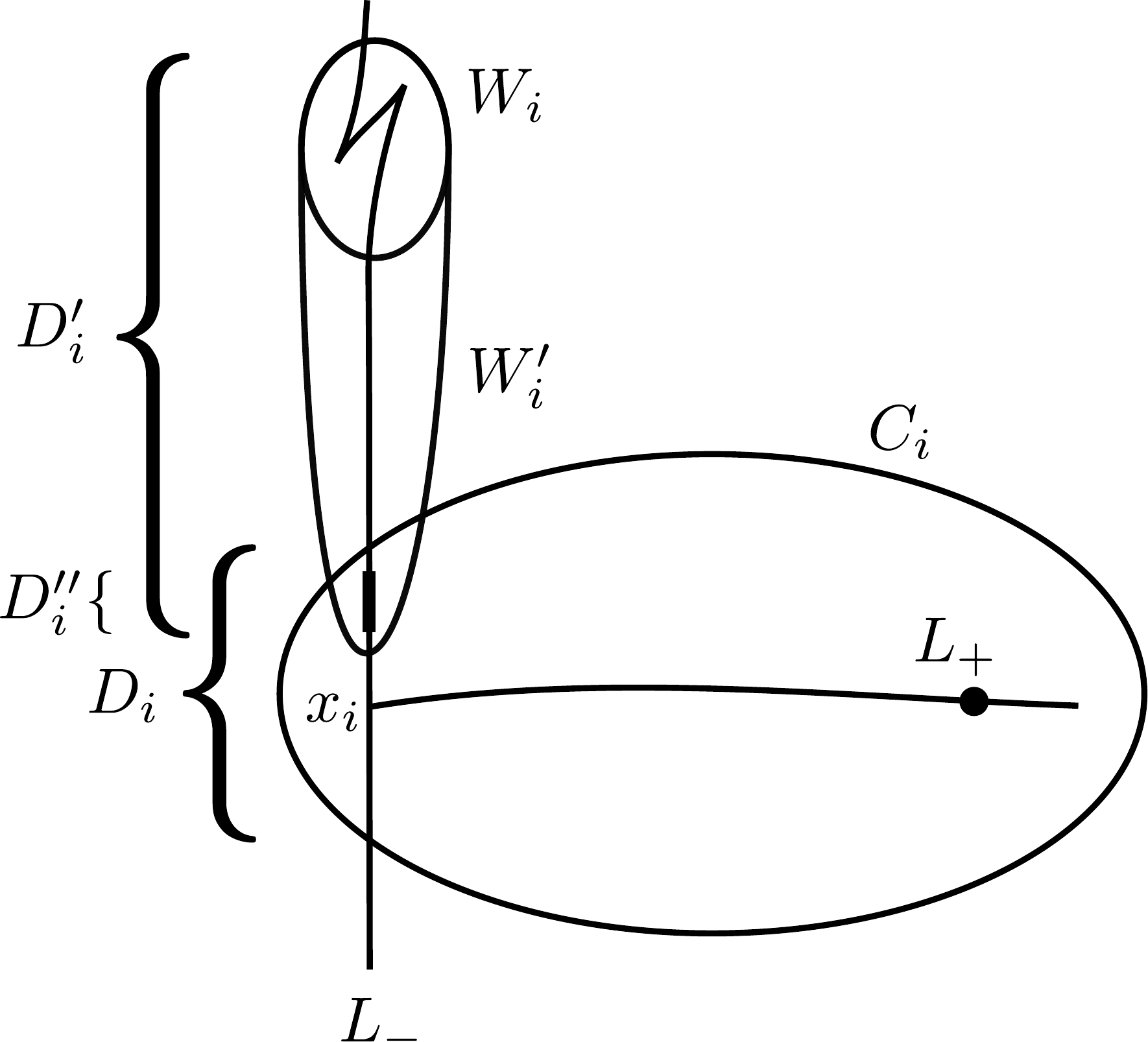}
    \caption{}
    \label{fig:step2_1_2}
  \end{subfigure}
  \caption{}
  \label{fig:step2_1}
\end{figure}
We pick a collection of disjoint loose charts $W_i$ in
$V\setminus (L_+\cup L_-^{(n-1)})$ for the connected components of
$L_-^{(n)}\setminus L_-^{(n-1)}$ containing $x_i$ as granted by \cref{def:loosecomplex}
(the fact that the loose charts can be assumed pairwise disjoint is not
obvious but follows from \cref{rem:stabilization} together with \cref{thm:murphy}).
Pick a collection of pairwise disjoint arcs $\gamma'_i$ in $L_-^{(n)}\setminus L_-^{(n-1)}$
connecting $\del W_i$ to $\del D_i$. After pushing $W_i$ along a contact vector field
tangent to $\gamma'_i$ and $L_-$, and supported in a small neighborhood of
$\gamma'_i$, we obtain a new loose chart $W'_i$ for $L_-$ such that $W'_i\cap L_+=\emptyset$
and $D'_i:=W'_i\cap L_-$ intersects $\Int D_i$. A Darboux ball $B'$ containing all the
balls $W'_i$ in its interior can be obtained by a connect sum operation along
transverse arcs.

Pick a disk $D_i'' \subset \left[ \left( \Int D_i' \cap \Int D_i \right) \setminus \{x_i\} \right]$.
We claim that there is a Legendrian isotopy $g''_u:D'_i \to W'_i$ such that
\begin{itemize}
\item $g''_0$ is the inclusion,
\item $g''_u=g''_0$ near $\del D'_i$,
\item $g''_1=g''_0$ in $D'_i\setminus D_i''$,
\item $g''_1(D'_i)$ has a loose chart $U_i$ such that
$f([0, 1] \times U_i) \subset (\Int C_i) \setminus (L_+ \cup f([0, 1] \times (D_i\setminus D_i'')))$,
\item $f([0, 1] \times g''_1(D_i'')) \subset \Int C_i \setminus (L_+ \cup f([0, 1] \times (D_i \setminus D_i'')))$.
\end{itemize}

Using \cref{rem:stabilization}, we may first construct a formal Legendrian
isotopy supported in a small neighborhood of a point of $D_i''$
satisfying the above properties. It is a priori genuine only at $u=0$ and $u=1$.
But using looseness, \cref{thm:murphy} (with $p=1$, relative to $\del D'_i$ in the source
and to $\del W'_i$ in the target) allows us to deform it into a genuine Legendrian
isotopy with fixed end-points.
Extend this Legendrian isotopy to a contact isotopy, still denoted $g''$,
supported in the union of all $W'_i$, and thus in $B'$. Since $g''$ has support in this union, 
$g''([0, 1] \times L_-) \cap L_+$ is empty. Hence there exists a positive $\epsilon$ such that 
$f_t(g''([0, 1] \times L_-)) \cap L_+$ is empty for all $t \leqslant \epsilon$.

We set $g'_t=(g''_{t/\epsilon})^{-1}$
for $t\leq \epsilon$ and $g'_t=(g''_\epsilon)^{-1}$ for $t \geq \epsilon$ (or rather a
version smoothed at $t=\epsilon$). Our choice of $\epsilon$ ensures that 
$f_t \circ (g'_t)^{-1}(L_-) \cap L_+ = \emptyset$ for $t \leqslant \epsilon$. In addition
$U_i$ is a loose chart for $(g'_t)^{-1}(L_-)$ for all $t\geq \epsilon$.
\begin{figure}[ht]
  \centering
  \begin{subfigure}[b]{6cm}
    \includegraphics[width=6cm]{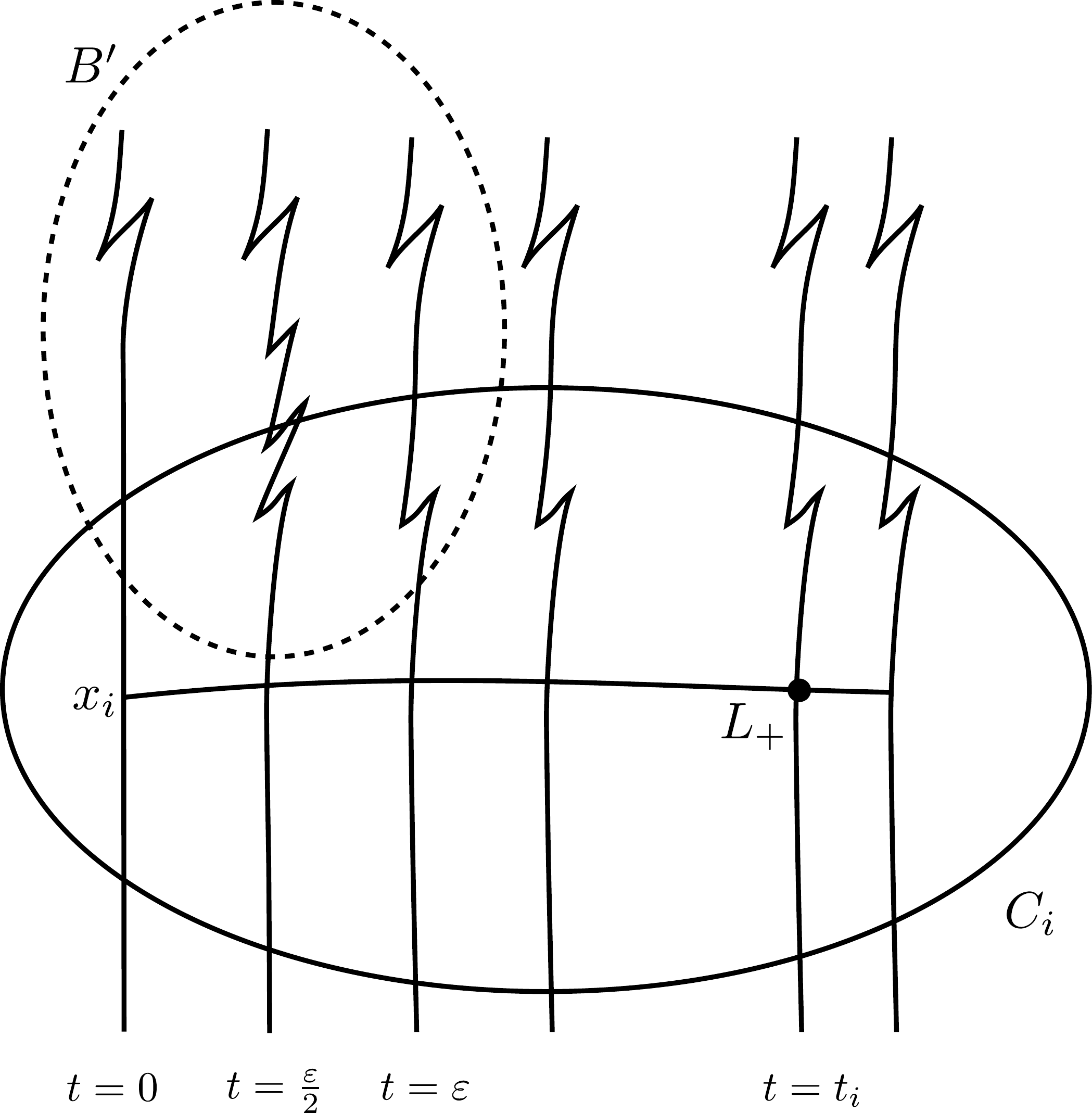}
    \caption{}
    \label{fig:step2_2_1}
  \end{subfigure}
  \begin{subfigure}[b]{6cm}
    \includegraphics[width=6cm]{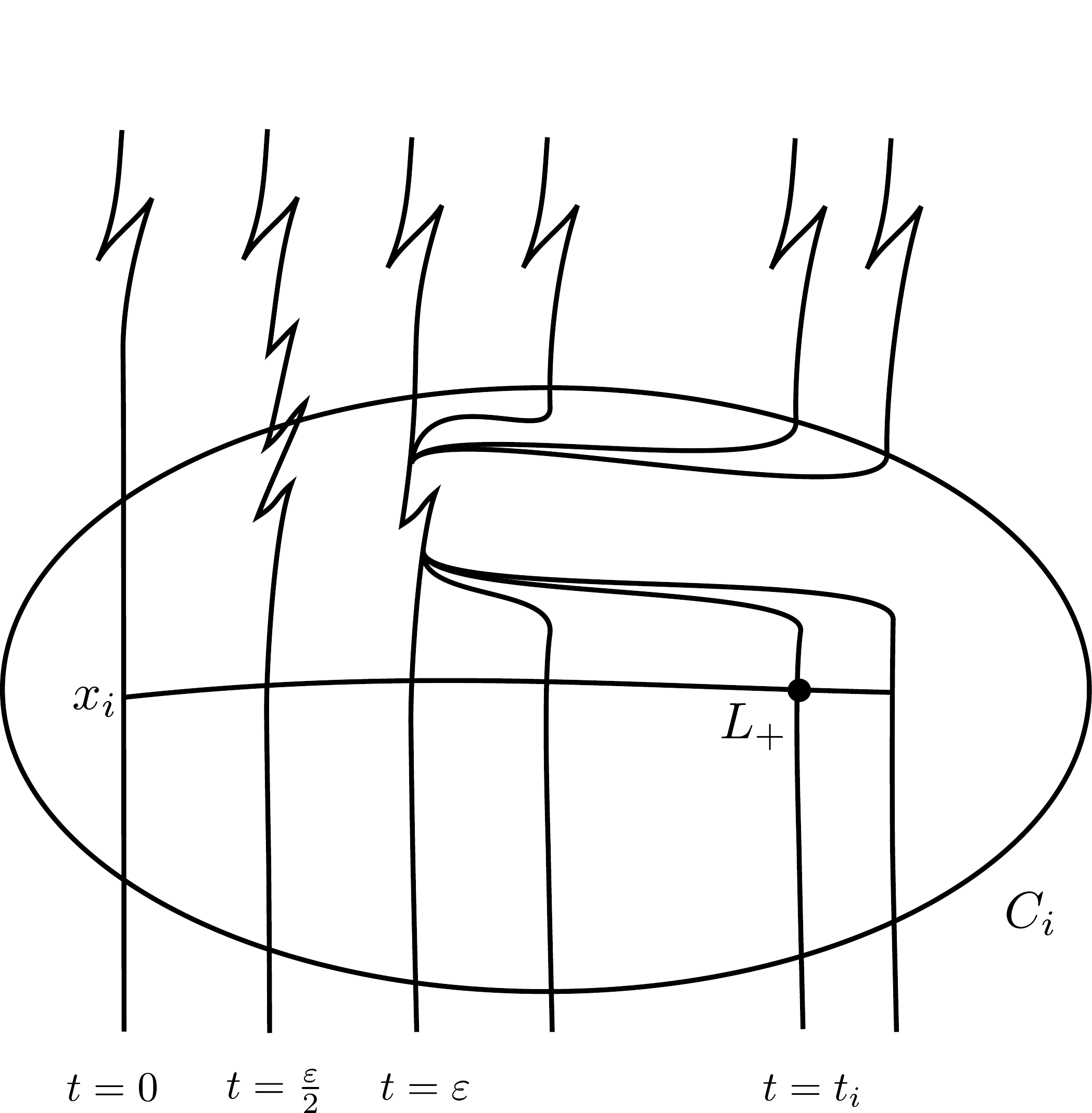}
    \caption{}
    \label{fig:step2_2_2}
  \end{subfigure}
  \caption{}
  \label{fig:step2_2}
\end{figure}

But $f_t(U_i)$ is moving, so the last thing to do is to fix this loose chart
using an isotopy $h$.
For this, we cut off the contact vector field generating the isotopy
$f_t$ by multiplying the corresponding section of $T V/\xi$
by a function $\rho \co [0, 1] \times V \to \mathbb{R}$ equal to $1$ on the image of
$[0,1] \times U_i$ by the embedding $(t, x) \mapsto (t, f_t(x))$, and supported in
$[0, 1] \times (\Int C_i \setminus (\gamma_i \cup L_+))$. We obtain a contact isotopy $h_t$
such that $h_t=f_t$ on $U_i$ for all $t$ and hence the family of isotropic embeddings
$h_t^{-1} \circ f_t\circ (g'_t)^{-1}(L_-)$ has a fixed loose chart
$U_i$ for all $t \geq\epsilon$, ensuring \cref{it:fixed_chart}.
It is straightforward to check \cref{it:same_coll,it:in_D_i'',it:same_disks}
using the properties of $g''_1$ listed above. The support of $h$ is in $B$
thanks to \cref{it:tubdarboux}.

\begin{step}(See \cref{fig:step3})
There is a family $k_{t, s}:D \to C$, $(t, s) \in[\epsilon,1]\times [0, 1]$ of neat
Legendrian embeddings such that
  \begin{stepitems}
    \item\label{it:s=0} $k_{t,0}=k_t$,
    \item\label{it:t=epsilon} $k_{t, s}=k_t$ for $t$ near $\epsilon$,
    \item\label{it:bordDi} $k_{t, s} = k_t$ near $\del D$,
    \item\label{it:nocollision} $k_{t, 1}^{-1}(L_+) = \emptyset$.
  \end{stepitems}
\end{step}

\begin{figure}[ht]
  \centering
  \includegraphics[width=5cm]{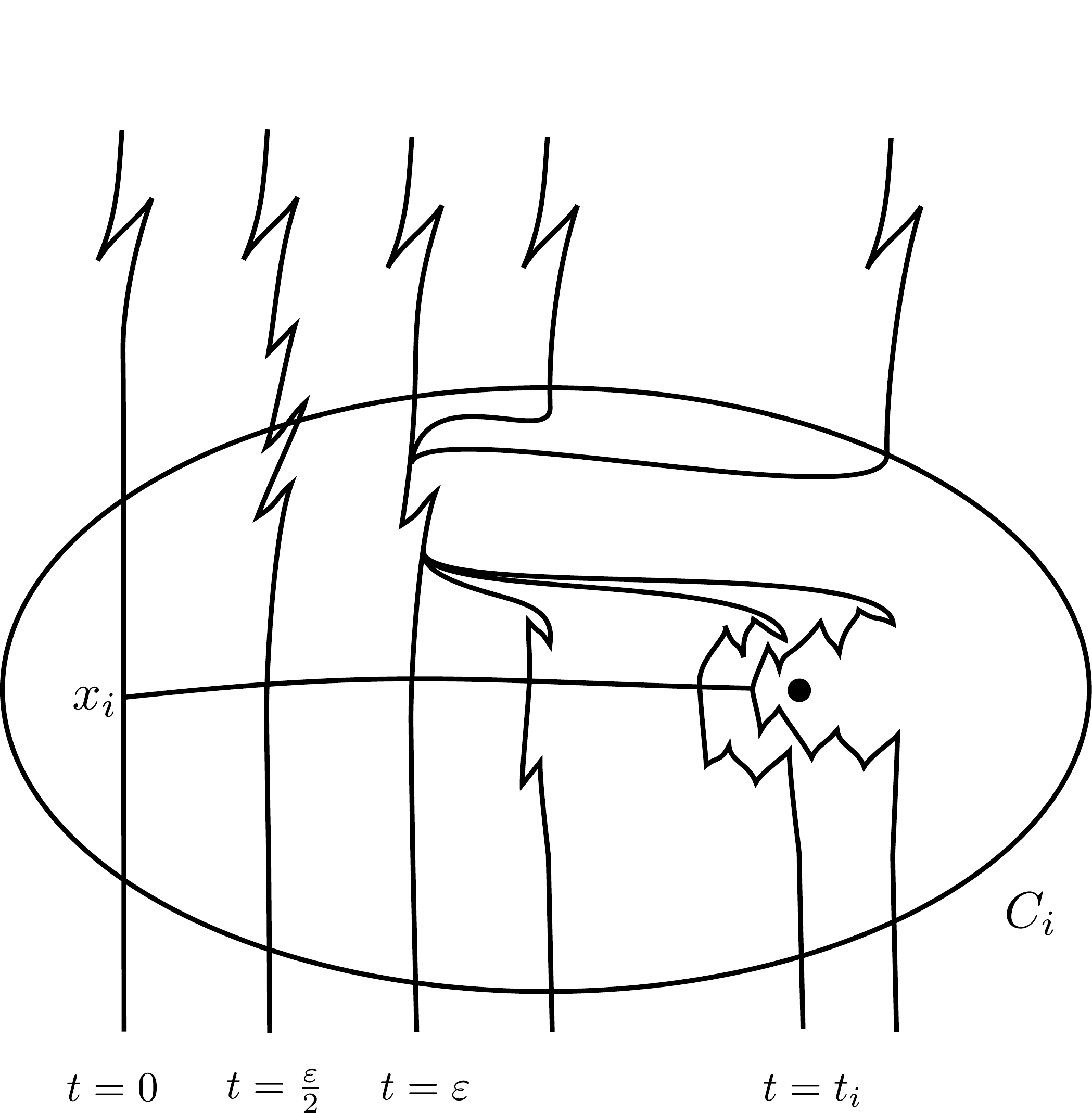}
  \caption{}
  \label{fig:step3}
\end{figure}

We fix $\eta$ in $(\epsilon, \min_i t_i)$ and
$\theta_s \co [\epsilon, 1] \times D \to [\epsilon, 1] \times D$, $s \in [0, 1/2]$, a family of embeddings,
starting with $\Id$, supported in $[\eta, 1] \times (D \setminus D'')$
and such that $\theta_{1/2}([\epsilon, 1] \times D) \cap ([t_i, 1] \times \{x_i\}) = \emptyset$.
We set $k'_{t,s}(x)=k(\theta_s(t,x))$. Thanks to \ref{it:emb}
and \ref{it:in_D_i''}, we see that for each $t\in[\epsilon,1]$ and
$s\in[0, 1/2]$, $k'_{t,s}:D\to C$ is a neat embedding. This satisfies
\cref{it:s=0,it:t=epsilon,it:bordDi,it:nocollision} but has no reason to be
Legendrian for $s>0$. However, it inherits from $k_{t,0}$ the structure of a formal
Legendrian embedding in a homotopically unique way.
We will extend the set of $(t, s)$ for which $k'_{t, s}$ is defined
to $[\epsilon, 2] \times [0, 1]$, and then deform the extended family into a family whose
restriction to $[\epsilon , 1] \times [0, 1]$ will be genuinely Legendrian 
without destroying \cref{it:s=0,it:t=epsilon,it:bordDi,it:nocollision}.
In particular it is important that, for all $s$, $k'_{t, s}\rst{\partial D}$ will be 
$k'_t\rst{\partial D}$ if $t \leqslant 1$ and $k'_1\rst{\partial D}$ if $t \geqslant 1$.
For this, we will apply \cref{thm:murphy} several times using the fact
that the embeddings $k'_{t,s}$ have a fixed loose chart $U_i$ by \ref{it:fixed_chart}.
Note that we can guarantee that a fixed loose chart remains during
each application of \cref{thm:murphy} (the idea is that a loose chart contains
two disjoint loose charts, so we may use one and let the other fixed), we will not
repeat it each time. We first apply \cref{thm:murphy} with $p = 0$, to the embedding $k'_{1,\frac{1}{2}}$
in the contact manifold $C_i \setminus L_+$ to find a family of formal Legendrian
embeddings $k'_{t,\frac{1}{2}}$ for $t\in[1,2]$ so that $k'_{2,\frac{1}{2}}$ is a
genuine Legendrian embedding. Next we apply \cref{thm:murphy} with $p = 1$ to the
family $k'_{t,1/2}$ for $t\in[\epsilon,2]$ in the contact manifold $C_i\setminus L_+$
to find a family $k'_{t,s}$ for $t\in[\epsilon,2]$ and $s\in[1/2,1]$ so that
$k'_{t,1}$ is genuine and $k'_{t,1}(D_i)\cap L_+=\emptyset$. Then we apply \cref{thm:murphy}
with $p = 1$ to the family $k'_{1,s}$ for $s\in[0,1/2]$ concatenated with
$k_{t,\frac{1}{2}}$ for $t\in[1,2]$ in the contact manifold $C_i$ to find a family
$k'_{t,s}$ for $t\in[1,2]$ and $s \in[0,\frac{1}{2}]$ that is genuine for $s = 0$
and for $t = 2$. Finally we apply \cref{thm:murphy} with $p = 2$ to the family
$k'_{t,s}$ with $t\in[\epsilon,2]$ and $s\in[0,1]$ in the contact manifold $C_i$ to find
a family $k'_{t,s,r}$ with $r\in[0,1]$ such that $k'_{t,s,1}$ is
genuine. By construction, $k_{t,s}:=k'_{t,s,1}$ for $s\in[0,1]$ and $t\in[\epsilon, 1]$
is a family of neat Legendrian embeddings satisfying
\cref{it:s=0,it:t=epsilon,it:bordDi,it:nocollision}.

\begin{step}
There exists a contact isotopy $l_t$ supported in $B$ such that
$(l_t\circ f'_t)(L_-)\cap L_+=\emptyset$ for all $t$.
\end{step}

For $t\in[\epsilon,1]$ and $s\in[0,1]$, we consider the family of vector fields
$\nu_{t,s}=\frac{\d k_{t,s}}{\d s}$ along the embedding $k_{t,s}$.
Since $k_{t,s}$ is Legendrian for all $t$ and $s$, it can be extended to a smooth
family of contact vector fields $\tilde{\nu}_{t,s}$ supported in an arbitrarily
small neighborhood of the support of $\nu_{t,s}$.
In particular, we may assume using \ref{it:bordDi} that the support of
$\tilde{\nu}_{t,s}$ is contained in the interior of $C_i$. Moreover, we may assume that
$\tilde{\nu}_{t,s}=0$ near $t=\epsilon$ thanks to \ref{it:t=epsilon}.
The family of vector fields $\tilde{\nu}_{t,s}$ then integrates uniquely into a smooth
family of contact diffeomorphisms $\psi_{t,s}$ such that $\psi_{t,0}=\id$ and
$\frac{\d \psi_{t,s}}{\d s}=\tilde{\nu}_{t,s} \circ \psi_{t,s}$.
By construction and using \ref{it:s=0}, we get $k_{t,s}=\psi_{t,s}\circ k_t$,
and $\psi_{t,s}=\id$ for $t$ near $\epsilon$.  We may thus smoothly extend the
family $\psi_{t,s}$ by defining $\psi_{t,s}=\id$ for $t\leq \epsilon$. We now set
$l_t=\psi_{t,1}$, which is supported in $C$ hence in $B$, and get
$(l_t\circ f'_t)(L_-)\cap L_+=\emptyset$ for all $t$.

\paragraph{Conclusion}

Define $g_t := h_t \circ (l_t)^{-1}$ which is a contact isotopy supported in $B$.
We have achieved $g_t^{-1} \circ f_t \circ (g'_t)^{-1}(L_-) \cap L_+ = \emptyset$
for all $t$. The claim now follows from \cref{lem:disjoint}.

\section{Commutators, fragments and conjugates}
\label{sec:commutators_and_fragments}

In this section we prove all theorems stated in the introduction, except
\cref{thm:decomp_intro} which was already subsumed in \cref{thm:decomposition}. We
first need a lemma which is used in several of them.

\begin{lemma}
\label{lem:ball_conj}
  Let $(V, \xi)$ be a contact manifold and let $G$ denote either 
  $\Do(V, \xi)$ or its universal cover. Let $B$ be a Darboux ball inside
  $V$, $p$ a point in the interior of $B$, and $g$ an element of $G$. If
  $g(p)$ is in $B \setminus \{p\}$ then every element of $G$ with support in the
  interior of $B$ is a product of eight conjugates of $g^{\pm1}$.
\end{lemma}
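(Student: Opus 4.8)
The plan is to derive this from \cref{prop:huit_conj}. That proposition produces a factorization into eight conjugates of $g^{\pm1}$ from a displacing element $g$ together with nested open sets $M'' \subset M'$ compressing onto a compact $L \subset M''$; it requires in particular both $g(L) \subset M'' \setminus L$ \emph{and} $L \subset g(M'')$. The hypothesis $g(p) \in B \setminus \{p\}$ controls $g(p)$ but says nothing about $g^{-1}(p)$, so one cannot apply \cref{prop:huit_conj} to $g$ directly with $L = \{p\}$ and $M'' = \Int B$. The first and essentially only real step is therefore to replace $g$ by a conjugate for which both points are under control.

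Since $g(p) \neq p$ and $\Do(V, \xi)$ acts multiply transitively on finite tuples of pairwise distinct points of $V$ (one moves the points one at a time along embedded arcs made transverse to $\xi$ and engulfed in Darboux tubes, exactly as in the proof of \cref{thm:decomposition}), there is $c \in \Do(V, \xi)$ with $c(p) = p$ and $c(g(p)), c(g^{-1}(p)) \in \Int B$. If $G = \tildeDo{V, \xi}$ one lifts $c$ to $G$; in either case put $g_1 := \conj{c}{g} \in G$, so that $g_1(p) = c(g(p)) \in \Int B \setminus \{p\}$ and $g_1^{-1}(p) = c(g^{-1}(p)) \in \Int B$. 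A conjugate of $g_1^{\pm1}$ is a conjugate of $g^{\pm1}$, so it suffices to express every element of $G$ supported in $\Int B$ as a product of eight conjugates of $g_1^{\pm1}$.

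Next I would manufacture the two compressing flows inside a Darboux chart. Fix a contactomorphism of a neighbourhood of $B$ onto a ball in $(\R^{2n+1}, \ker\alpha_0)$, $\alpha_0 = dz - \sum y_i\, dx_i$, sending $p$ to the origin and $\Int B$ to a ball $B(0,r_0)$. The conformal vector field $E = -\sum x_i \del_{x_i} - \sum y_i\del_{y_i} - 2z\del_z$ satisfies $\Lie_E \alpha_0 = -2\alpha_0$, hence is contact, and its flow $(x,y,z) \mapsto (e^{-t}x, e^{-t}y, e^{-2t}z)$ contracts every ball around the origin onto the origin. Replacing the contact Hamiltonian $\alpha_0(E)$ by $\chi\cdot\alpha_0(E)$, where $\chi$ is a cut-off equal to $1$ on $B(0,r_1)$ and to $0$ outside $B(0,r_2)$, leaves the associated contact vector field unchanged on $B(0,r_1)$ (there $d\chi = 0$); its flow is then a contact flow in $G$, compactly supported in $B(0,r_2)$, and still compresses $B(0,r_1)$ onto the origin. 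Choosing the radii appropriately yields a flow $\theta$ in $G$, compactly supported in some Darboux ball $M'$ with $B \subset \Int M'$, compressing $M'' := \Int B$ onto $p$, together with a flow $\varphi$ in $G$, compactly supported in $V$, compressing $M'$ onto $p$.

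Finally I would apply \cref{prop:huit_conj} with $M = V$, with these $M'' \subset M'$, with $L = \{p\}$, with the element $g_1$, and with the flows $\varphi, \theta$: the two displacement conditions $g_1(L) \subset M'' \setminus L$ and $L \subset g_1(M'')$ are precisely the properties of $g_1$ arranged in the second paragraph, and $\theta$ is compactly supported in $M'$ by construction. The proposition then says that any element of $G$ which is a product of commutators of elements supported in $M'$ is a product of at most eight conjugates of $g_1^{\pm1}$. It remains to observe that any $f \in G$ supported in $\Int B$ is such a product of commutators: the interior of a Darboux ball is a contact manifold contactomorphic to standard $\R^{2n+1}$, so $\Do(\Int B, \xi)$ and its universal cover are perfect by \cite{Rybicki}, whence $f$ is a product of commutators of elements of $G$ supported in $\Int B \subset M'$. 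The only genuinely delicate point in this plan is the first conjugation (which is what repairs the missing control on $g^{-1}(p)$); the compressing flows come from the standard cut-off of a conformal contact vector field, and the rest is bookkeeping.
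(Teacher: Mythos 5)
Your argument is correct, and its skeleton is the same as the paper's: write $f$ as a product of commutators supported in $\Int B$ via Rybicki's perfectness (applied to $\Int B\cong\R^{2n+1}$), build two nested compressing flows by cutting off a Heisenberg-type contracting contact field in a Darboux model, and feed everything into \cref{prop:huit_conj}. The one genuinely different move is your preliminary conjugation replacing $g$ by $g_1=\conj{c}{g}$ with $c(p)=p$ and $c(g(p)),c(g^{-1}(p))\in\Int B$, and this is a worthwhile refinement rather than a detour: \cref{prop:huit_conj} needs both $g(L)\subset M''\setminus L$ and $L\subset g(M'')$, and the paper's proof secures the latter by fixing $R$ ``large enough \dots{} that $p$ is in $g(B_R)$'', which tacitly assumes $g^{-1}(p)\in\Int B$ --- something the stated hypothesis $g(p)\in B\setminus\{p\}$ does not provide (nor is it arranged where the lemma is invoked in the proof of \cref{thm:main}, where $B$ is only chosen to contain $p$ and $g(p)$). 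Since conjugates of $g_1^{\pm1}$ are conjugates of $g^{\pm1}$, your step shows the lemma holds exactly as stated (indeed you only use that $g$ displaces some point), whereas the paper's write-up implicitly needs $g^{-1}(p)\in\Int B$ or an analogous conjugation. Two small points to tidy: allow the coincidence $g^{-1}(p)=g(p)$ when invoking multiple transitivity (the case $g^{-1}(p)=p$ is excluded), and note that asking your chart on a neighborhood of $B$ to send $\Int B$ onto a round ball is avoidable --- it is cleaner to do as the paper does and identify $\Int B$ itself with all of standard $\R^{2n+1}$, cutting the dilation off at two different radii so that both flows are supported inside $\Int B$; also take $M'$ open (e.g.\ the interior of your slightly larger ball) when applying \cref{prop:huit_conj}. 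None of this affects the validity of your proof.
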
 

\begin{proof}
  The interior of $B$ is isomorphic to $\mathbb{R}^{2n+1}$ equipped 
    with $\ker(dz + \lambda)$ where $\lambda$ is the radial Liouville form on $\mathbb{R}^{2n}$,
  and $p$ gets mapped to the origin.
  Let $f$ be an element of $G$ with compact support in the interior of
  $B$. We consider the Heisenberg dilatation flow 
  $\delta_t \co (x, y, z)) \mapsto (e^{-t}x, e^{-t}y, e^{-2t}z)$. Note that each
  ball $B_r := \{ \|x\|^4 + \|y\|^4 + \|z\|^2 < r \}$ is preserved by $\delta_t$, $t \geqslant 0$.
  We fix $R$ large enough to make sure that both $g(p)$ and the support
  of $f$ are in $B_R$, and that $p$ is in $g(B_R)$.
  We cut $\delta_t$ between radius $4R$ and $5R$ to ensure that it extends to
  a global flow $\varphi$ of $G$ which compresses $V' := B_{4R}$ onto $\{p\}$.
  We also cut it between radius $2R$ and $3R$ to get a flow $\theta$ with
  support in $V'$ and compressing $V'' :=  B_{2R}$ onto $\{p\}$. It
  only remain to apply Rybicki's theorem and \cref{prop:huit_conj}.
\end{proof}

\begin{proof}[Proof of \cref{thm:trente_deux_conj,thm:c0}]
  \cite{Giroux_ICM} ensures that $(V, \xi)$ has a supporting open book
  with Weinstein pages.  Then \cref{prop:leg_skeleta} gives isotropic
  complexes $L_-$ and $L_+$, and a contact flow $\varphi_t$ which retracts the
  complement of one complex onto the other.

  Let $G$ be either $\Do(V, \xi)$ or its universal cover.  According to
  \cref{lem:disjoint}, there is a $C^0$-neighborhood of the identity in
  $G$ or $\tildeG$ such that all elements can be written as a product
  $f^- \circ f^+$ where $f^\pm$ has compact support in the complement of $L_\pm$.
  Alternatively, if we assume the existence of an open book with
  flexible pages, then \cref{thm:decomposition} decomposes any element
  of $G$ or $\tildeG$ as $g \circ f^- \circ f^+ \circ g'$ where $g$ and $g'$ have support in
  Darboux balls $B$ and $B',$ and each $f^\pm$ is relative to $\varphi_1(L_\pm)$
  for some contact isotopy $\varphi_t$. Up to conjugating $f^{\pm}$
  by this contact isotopy $\varphi$, we may assume that $f^{\pm}$ is relative
  to $L_\pm$.

  According to \cref{prop:displacing} (see also \cref{rem:conicsing}),
  if $\psi_t$ is a positive or negative
  contact isotopy then $\psi_\epsilon$ displaces $L_- \cup L_+$ for all $\epsilon$ in some interval
  $(0, \epsilon_0]$. In the flexible page case, after reducing $\epsilon_0$ if needed, we can
  also assume there exists $p$ in $B$ (resp. $p'$ in $B'$) such that $\psi_\epsilon(p)$
  is in $B \setminus \{p\}$ (resp.  $\psi_\epsilon(p')$ is in $B' \setminus \{p'\}$). Such a small $\epsilon$
  is now fixed until the end of the proof.

  In both cases we have a decomposition into $k+2$ contact
  transformations, where $k = 0$ in the first case and $k = 2$ in the
  second case. We only need to prove that each piece is a product of
  eight conjugates of $\psi_\epsilon^{\pm 1}$. This property is invariant under
  conjugation so we are free to conjugate each piece (separately).  We
  apply \cref{prop:huit_conj} to each piece. Pieces with support in
  Darboux balls are handled by \cref{lem:ball_conj}.

  We now explain how to deal with $f^-$, the case of $f^+$ being
  completely symmetric.  Let $V'=V\setminus L_-$ and $V''=V\setminus N$
  where $N$ is a compact neighborhood of $L_-$ so small
  that $\supp(f^-) \subset V''$, $\psi_\epsilon(L_+) \subset V''$ and $L_+ \subset \psi_\epsilon(V'')$.
  We then cut the Hamiltonian defining $X_t$ near $L_-$ in order to obtain
  a flow $\theta_t$ which has compact support in $V'$ but agrees
  with $\varphi_t$ in $V''$. In particular $\theta$ compresses $V''$ onto $L_+$.  It only
  remain to apply Rybicki's theorem and \cref{prop:huit_conj}.
\end{proof}

\begin{proof}[Proof of \cref{thm:main}]
  Let $G$ be either $\Do(V, \xi)$ or its universal cover. Let $L_-$ and $L_+$ be
  isotropic complexes associated to an open book with flexible pages as in the
  proof of \cref{thm:trente_deux_conj}. We claim there is some $\psi_\epsilon$ in $G$
  which displaces $L := L_- \cup L_+$ and is a product of $n = (\dim V - 1)/2$
  elements with support in Darboux balls. Indeed, let $\phi_t$ be a small Reeb flow
  displacing $L$ for all positive $t$ (see \cref{prop:displacing}). \Cref{lem:frag}
  (see also \cref{rem:cellcomplex}) gives contact isotopies
  $\phi_0$, \dots, $\phi_n$ such that 
  $\psi_t := \phi_{0, t} \circ \cdots \circ \phi_{n, t}$ coincides with $\phi_t$
  near $L$ for small $t$. Hence there is some $\epsilon$ such that
  $\psi_\epsilon$ displaces $L$. 

  The proof of \cref{thm:trente_deux_conj} proves more generally that every
  element $f$ of $G$ is a product of $32$ conjugates of $\psi_\epsilon$, hence
  of $32(n+1)$ elements $f_i$ with support in Darboux balls $B_i$. Let $g$ be
  any element of $G$ displacing some point $p$ (this means any non-trivial
  element in the case of $\Do(V, \xi)$, or any element not lying over the
  identity in the universal cover case). Let $B$ be a Darboux ball containing
  both $p$ and $g(p)$ in its interior. Because $G$ acts transitively on the set
  of Darboux balls (see e.g.  \cite[Theorem~2.6.7]{Geiges_book}) each $f_i$ is
  conjugated to an element with support in $B$, hence to a product of eight
  conjugates of $g^{\pm1}$ by \cref{lem:ball_conj}. So $f$ itself is conjugated
  to a product of at most $256(n+1)$ conjugates of $g^{\pm1}$.
\end{proof}

\printbibliography
\end{document}